\newcommand{\pagea}{FOCS18-285}
\newcommand{\pageb}{FOCS18-315}
\theoremstyle{plain}
\newtheorem{theorem}{Theorem}[section]
\newtheorem{lemma}[theorem]{Lemma}
\newtheorem{corollary}[theorem]{Corollary}
\newtheorem{maintheorem}{Theorem}
\newtheorem{mainlemma}[maintheorem]{Lemma}
\theoremstyle{definition}
\newtheorem{definition}[theorem]{Definition}
\newtheorem{remark}[theorem]{Remark}
\numberwithin{equation}{section}
\newcommand\C{{\mathbb{C}}}
\newcommand\F{{\mathbb{F}}}
\newcommand{\R}{\mathbb{R}}
\newcommand\Z{{\mathbb{Z}}}
\newcommand\Tr{{\mathop\textup{tr}}}
\newcommand\irr{{\mathop\textup{Irr}}}
\newcommand{\GL}{\textup{GL}}
\newcommand{\SL}{\textup{SL}}
\newcommand{\U}{\textup{U}}
\newcommand{\SU}{\textup{SU}}
\newcommand{\Orth}{\textup{O}}
\newcommand{\Fun}{\mathop{\textup{RepFun}}}
\newcommand{\Irr}{\mathop{\textup{Irr}}}
\newcommand{\rk}{\mathop{\textup{rk}}{}}
\newcommand{\Lie}{\mathop{\textup{Lie}}}
\newcommand{\lpm}{\textup{lpm}}
\newcommand{\Inv}{\textup{Inv}}
\newcommand{\diag}{\textup{diag}}
\newcommand{\Span}{\mathop{\textup{Span}}}
\newcommand{\Image}{\mathop{\textup{Image}}}
\title{Finite matrix multiplication algorithms \\ from infinite groups}
\author{Jonah Blasiak\footnote{Department of Mathematics, Drexel University, Philadelphia, PA 19104, USA, \texttt{jblasiak@gmail.com}. Supported by NSF grant DMS-2154282.}, Henry Cohn\footnote{Microsoft Research New England, One Memorial Drive, Cambridge, MA 02142, USA, \texttt{cohn@microsoft.com}}, Joshua A. Grochow\footnote{Departments of Computer Science and Mathematics, University of Colorado Boulder, Boulder, CO 80309, USA, \texttt{jgrochow@colorado.edu}. Supported by NSF CAREER award CCF-2047756.},\\ Kevin Pratt\footnote{Department of Computer Science, Courant Institute of Mathematical Sciences, New York, NY 10012, USA, \texttt{kp2154@nyu.edu}. Supported by Subhash Khot's Simons Investigator Award.}, and
Chris Umans\footnote{Department of Computing and Mathematical Sciences, California Institute of Technology, Pasadena, CA 91125, USA, \texttt{umans@cs.caltech.edu}. Supported by a Simons Foundation Investigator Award.}
}
\date{}
\begin{document}

\maketitle

\begin{abstract}
The Cohn--Umans (FOCS '03) group-theoretic framework for matrix multiplication produces fast matrix multiplication algorithms from three subsets of a finite group $G$ satisfying a simple combinatorial condition (the Triple Product Property). 
The complexity of such an algorithm then depends on the representation theory of $G$. In this paper we extend the group-theoretic framework to the setting of \emph{infinite} groups. In particular, this allows us to obtain constructions in Lie groups, with favorable parameters, that are provably impossible in finite groups of Lie type (Blasiak, Cohn, Grochow, Pratt, and Umans, ITCS '23). Previously the Lie group setting was investigated purely as an analogue of the finite group case; a key contribution in this paper is a fully developed framework for obtaining bona fide matrix multiplication algorithms directly from Lie group constructions. 

As part of this framework, we introduce ``separating functions'' as a necessary new design component, and show that when the underlying group is $G=\GL_n$, these functions are polynomials with their degree being the key parameter. In particular, we show that a construction with ``half-dimensional'' subgroups and optimal degree would imply $\omega = 2$. 
We then build up machinery that reduces the problem of constructing optimal-degree separating polynomials to the problem of constructing a \emph{single} polynomial (and a corresponding set of group elements) in a ring of invariant polynomials determined by two out of the three subgroups that satisfy the Triple Product Property. This machinery combines border rank with the Lie algebras associated with the Lie subgroups in a critical way.

We give several constructions illustrating the main components of the new framework, culminating in a construction in a special unitary group that achieves separating polynomials of optimal degree, meeting one of the key challenges. The subgroups in this construction have dimension \emph{approaching} half the ambient dimension, but (just barely) too slowly. 
We argue that features of the classical Lie groups make it unlikely that constructions in these particular groups could produce nontrivial bounds on $\omega$ unless they prove $\omega = 2$. 
One way to get $\omega=2$ via our new framework would be to lift our existing construction from the special unitary group to $\GL_n$, and 
improve the dimension of the subgroups from $\frac{\dim G}{2} -\Theta(n)$ to $\frac{\dim G}{2} -o(n)$.
\end{abstract}

\section{Introduction}
Matrix multiplication is a fundamental algebraic operation with myriad algorithmic applications, and as such, determining its complexity is a central question in computational complexity. Since Strassen's 1969 discovery \cite{strassen} that one could beat the straightforward $O(n^3)$ method with one that used only $O(n^{\log_2 7}) = O(n^{2.81\dots})$ arithmetic operations, there has been a long line of work improving upper bounds on the complexity of matrix multiplication. It is standard to define the exponent $\omega$ of matrix multiplication as the smallest number such that $n \times n$ matrices can be multiplied using $n^{\omega + o(1)}$ arithmetic operations, and a somewhat surprising folklore conjecture is that $\omega=2$, which would mean matrices can be multiplied asymptotically almost as quickly as they can be added. The current best bound is that $\omega \leq 2.371339$ \cite{williams2024new}, and proving or disproving that $\omega=2$ remains a longstanding open question.

In \cite{cu2003}, an approach towards this problem was proposed based on embedding matrix multiplication into the multiplication operation in the group algebra of a finite group. Group algebra multiplication then reduces to the multiplication of block-diagonal matrices, where the sizes of these blocks are determined by the (typically well-understood) representation theory of the group. Ultimately, one hopes to reduce a single matrix multiplication to the multiplication of (many) smaller matrices. Within this very general framework, certain families of groups have subsequently been shown to have structural properties that prevent a reduction that would yield $\omega = 2$ using this framework \cite{BCCGNSUCapSetMM, BCCGUgroupsMM, blasiak2023matrix}. Other groups remain potentially viable, and the overall approach remains one of the two main lines of research towards improving upper bounds on $\omega$, the other being the traditional ``direct'' tensor methods (e.\,g., \cite{schonhage1981partial, strassen1988asymptotic, coppersmith1990matrix, williams2012multiplying,davie2013improved,le2014powers,alman2021refined,duan2023faster}), which also seem to be running up against several barrier results \cite{AmbainisFilmusLeGall,christandl2019barriersJournal,alman2023limits,CLGLZ}.

In more detail, the reduction to group algebra multiplication is possible when one identifies a finite group $G$ and sets $X,Y,Z \subseteq G$ satisfying the \emph{triple product property} (TPP): for any $x,x' \in X$, $y, y' \in Y$,  and $z,z' \in Z$, \[xx'^{-1}yy'^{-1}zz'^{-1} = 1_G \Longleftrightarrow x = x', y=y', z=z'.\] 
If $X, Y, Z \subseteq G$ satisfy the
TPP, then we can multiply two complex matrices $A$ and $B$, of sizes $|X| \times |Y|$ and $|Y| \times |Z|$, resp., as follows. Index $A$ with $X \times Y$, with $A[x,y]$ denoting the $x,y$ entry of $A$, and index $B$ with $Y \times Z$. Then we define the elements 
\[\overline{A} = \sum_{x,y} A[x,y] (xy^{-1}) \quad\textup{and}\quad \overline{B} =
  \sum_{y,z} B[y,z](yz^{-1})\]
of the group ring $\C[G]$ and observe that the TPP implies that
\begin{equation} \label{eq:embed}
\overline{A}\cdot \overline{B} = \sum_{x,z} (AB)[x,z](xz^{-1}) + E,
\end{equation}
where $E \in \C[G]$ is supported on $XY^{-1}YZ^{-1} \setminus XZ^{-1}$. It is a standard fact of representation theory that, as algebras, $\C[G] \cong \bigoplus_i M_{d_i}(\C)$, where $M_{d_i}$ denotes the ring of $d_i \times d_i$ matrices, the sum is over the irreducible representations of $G$, and the $d_i$ are the dimensions of those representations. This leads to the inequality:

\begin{theorem}[\protect{\cite[Theorem 4.1]{cu2003}}]
\label{thm:main-TPP-finite-groups}
If $X, Y, Z$ satisfy the TPP in a finite group $G$, then
\[(|X|\,|Y|\,|Z|)^{\omega /3} \le \sum_i d_i^\omega,\]
where $d_i$ are the dimensions of the irreducible representations of $G$. 
\end{theorem}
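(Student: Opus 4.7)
The plan is to combine three ingredients: (a) the embedding in equation \eqref{eq:embed}, which reduces rectangular matrix multiplication to multiplication in $\C[G]$; (b) Wedderburn's theorem $\C[G] \cong \bigoplus_i M_{d_i}(\C)$, which further reduces that to a disjoint family of square matrix multiplications; and (c) Schönhage's asymptotic sum inequality, which converts such a reduction into a bound on $\omega$.

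First, I would recast equation \eqref{eq:embed} as a statement about bilinear complexity. The TPP implies in particular that the elements $xz^{-1}$ for $(x,z) \in X \times Z$ are pairwise distinct, so each entry of $AB$ can be recovered as a specific coefficient of $\overline{A}\cdot\overline{B}\in\C[G]$. Hence any bilinear algorithm for multiplication in $\C[G]$ yields a bilinear algorithm for $|X|\times |Y|$ by $|Y|\times|Z|$ matrix multiplication; equivalently, the matrix multiplication tensor $\langle |X|,|Y|,|Z|\rangle$ is a restriction of the group algebra multiplication tensor. Second, using the given algebra isomorphism $\C[G]\cong\bigoplus_i M_{d_i}(\C)$, the group algebra multiplication tensor is isomorphic to $\bigoplus_i \langle d_i, d_i, d_i\rangle$. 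Composing these two restrictions gives
\[
\langle |X|,|Y|,|Z|\rangle \preceq \bigoplus_i \langle d_i, d_i, d_i\rangle.
\]

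Third, I would invoke Schönhage's asymptotic sum inequality: whenever $\langle a,b,c\rangle$ is a restriction of $\bigoplus_j \langle a_j,b_j,c_j\rangle$, one has $(abc)^{\omega/3}\le\sum_j(a_jb_jc_j)^{\omega/3}$. Applied to the display above, this yields exactly $(|X|\,|Y|\,|Z|)^{\omega/3}\le\sum_i d_i^\omega$. The main obstacle—and the reason one cannot simply bound each summand by $R(\langle d_i,d_i,d_i\rangle)\le d_i^\omega$—is that such a bound fails for small $d_i$; it only holds as $d\to\infty$. Invoking Schönhage's asymptotic sum inequality packages this difficulty away. If one wished to avoid black-boxing it, one would instead pass to the $N$-th tensor power of the entire construction, working with $(G^N, X^N, Y^N, Z^N)$ whose irreducible dimensions are products $d_{i_1}\cdots d_{i_N}$; the bulk of these products tend to infinity with $N$, letting the asymptotic bound $R(\langle d,d,d\rangle)\le d^{\omega+o(1)}$ kick in term-by-term. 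After taking $N$-th roots and letting $N\to\infty$, the same inequality emerges, so the reliance on the asymptotic sum inequality is essentially one of convenience rather than necessity.
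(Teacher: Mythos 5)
Your proposal is correct and matches the standard proof from \cite{cu2003} that the paper is citing: embed $\langle |X|,|Y|,|Z|\rangle$ into $\C[G]$ via the TPP, apply Wedderburn's decomposition $\C[G] \cong \bigoplus_i M_{d_i}(\C)$ to obtain $\langle |X|,|Y|,|Z|\rangle \preceq \bigoplus_i \langle d_i,d_i,d_i\rangle$, and then convert this restriction into the exponent inequality by symmetrization plus the tensor power trick (which is what the paper means when it says ``by symmetrizing we effectively get a square matrix multiplication of size $(|X|\,|Y|\,|Z|)^{1/3}$ on the left side, and by the tensor power trick the right side here can be replaced by $\sum_\rho (\dim\rho)^\omega$''). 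One small caution: the version of Schönhage's asymptotic sum inequality you quote --- that $\langle a,b,c\rangle \preceq \bigoplus_j \langle a_j,b_j,c_j\rangle$ implies $(abc)^{\omega/3} \le \sum_j (a_jb_jc_j)^{\omega/3}$ --- is phrased in terms of restriction from a direct sum, whereas the usual form is a lower bound on the border rank of a direct sum of matrix-multiplication tensors, and deriving your phrasing requires exactly the symmetrization-and-tensor-power argument you describe as the ``fallback.'' That argument is entirely unproblematic here because all summands $\langle d_i,d_i,d_i\rangle$ are square, so after symmetrizing the left side and taking $N$-th powers each term can be bounded by $c_\varepsilon(\prod d)^{\omega+\varepsilon}$; the $c_\varepsilon^{1/N}$ disappears as $N\to\infty$ and $\varepsilon\to 0$. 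So your ``fallback'' is in fact the proof, and the black-box step is just a relabeling of it.
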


The upper bound on $\omega$ from Theorem~\ref{thm:main-TPP-finite-groups} depends on a trade-off between the size of the matrix multiplication that can be embedded into $\C[G]$ (reflected in $|X|, |Y|, |Z|$) and the dimensions of the irreducible representations of $G$. In abelian groups, the latter are optimal: all $d_i$ are 1. However, already in \cite{cu2003} it was observed that abelian groups cannot do better than the trivial construction $X=G$ and $Y=Z=\{1\}$, and thus cannot yield any bound better than the trivial bound $\omega \leq 3$. It was shown in \cite{cohn2005group} that non-abelian groups can achieve highly nontrivial bounds on $\omega$ (including many of the state-of-the-art bounds over the last decade), but obtaining such bounds requires a careful interplay between the size of the construction and the representation dimensions. Several families of groups have been shown not to admit constructions for which Theorem~\ref{thm:main-TPP-finite-groups} would give $\omega=2$, although many families of groups remain possibilities. 

Because of the difficulty of finding such constructions, it is useful to look for other potential sources of examples of groups and group-like objects that could yield such constructions. Cohn and Umans \cite{cu2003} gave a construction of a TPP triple in the infinite group $\GL_n(\R)$, despite having, at the time, no way of getting a (finite) matrix multiplication algorithm from such a construction.

One natural approach (that ends up not working) is to take a construction in a Lie group and try to transfer it to a finite group of Lie type, such as taking a construction in $\GL_n(\R)$ and attempting an analogous construction in $\GL_n(\F_q)$. Indeed, a construction in $\GL_n(\R)$ was given in \cite{cu2003} using the lower unitriangular, orthogonal, and upper unitriangular subgroups; this inspired a nontrivial TPP in the finite group $\SL_2(\F_q)$, where the orthogonal group is replaced by matrices of the form $\begin{pmatrix} 1+a & a \\ -a & 1-a \end{pmatrix}$. However, in that example, we have $|X|=|Y|=|Z|=q$, but $\SL_2(\F_q)$ has irreducible representations of dimension $q+1$, and reducing a $q \times q$ matrix multiplication to a $(q+1) \times (q+1)$ matrix multiplication doesn't give any bound on $\omega$. More generally, in \cite{blasiak2023matrix} it was shown that one cannot achieve $\omega=2$ via Theorem~\ref{thm:main-TPP-finite-groups} in any finite groups of Lie type, ruling out any such construction in $\GL_n(\F_q)$ or similar groups (though possibilities remain open to use related groups such as direct products of finite groups of Lie type). Thus, the Lie-type constructions remained only an analogy.

\subsection{First main contribution: algorithms from infinite groups}
In this paper, one of our main innovations is to extend the group-theoretic framework \cite{cu2003} to allow finite matrix multiplication algorithms from constructions in arbitrary---even infinite---groups. To achieve this, rather than using the entire group algebra $\C[G]$, which is infinite-dimensional when $G$ is infinite, we focus only on sets of functions $G \to \C$ that (1) are linearly computable from a finite set of finite-dimensional representations of $G$ (even when $G$ is infinite) and (2) ``separate'' the elements in the group algebra that are in the linear span of $XY^{-1} Y Z^{-1}$ (the support of \eqref{eq:embed}), in a sense made precise below (Definition~\ref{def:sep}). Our first main theorem in this generalized framework is then:

\begin{maintheorem}[{Theorem~\ref{thm:framework}}] \label{thm:main}
Let $G$ be a group (not necessarily finite), with finite subsets $X,Y,Z$ satisfying the TPP. If $R_{\textup{\textup{sep}}}$ is a set of finite-dimensional complex representations of $G$ whose matrix entries  separate $XY^{-1}YZ^{-1}$ (see Definition~\ref{def:sep}), then 
\[
\left(|X|\, |Y|\, |Z| \right)^{\omega/3} \leq \sum_{\rho \in R_{\textup{\textup{sep}}}} (\dim \rho)^\omega.
\]
\end{maintheorem}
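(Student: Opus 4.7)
The strategy is to generalize the argument behind Theorem~\ref{thm:main-TPP-finite-groups}, replacing the global decomposition $\C[G] \cong \bigoplus_i M_{d_i}(\C)$, which is unavailable for infinite $G$, by just enough linear information to read off the coefficient of $xz^{-1}$ in $\overline{A}\cdot\overline{B}$. The separating hypothesis on $R_{\textup{sep}}$ is precisely what will supply this information.

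To set up the algorithm, take matrices $A \in \C^{X\times Y}$ and $B \in \C^{Y \times Z}$ and form $\overline{A},\overline{B} \in \C[G]$ as in~\eqref{eq:embed}. The TPP guarantees that $\overline{A}\cdot\overline{B}$ lies in the finite-dimensional subspace $V \subseteq \C[G]$ spanned by $XY^{-1}YZ^{-1}$, and that its coefficient on $xz^{-1}$ equals $(AB)[x,z]$ for every $x \in X$, $z \in Z$. For each $\rho \in R_{\textup{sep}}$, extend $\rho$ linearly to $\C[G]$ and compute $\rho(\overline{A})\cdot\rho(\overline{B}) = \rho(\overline{A}\cdot\overline{B})$; this is a single multiplication of $d_\rho \times d_\rho$ matrices whose inputs depend linearly on $A$ and on $B$ respectively.

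Next I recover the output using the separating property. Unpacking Definition~\ref{def:sep}, the hypothesis delivers exactly the linear-algebraic statement I need: the restrictions to $V$ of the matrix-entry functions of the $\rho \in R_{\textup{sep}}$ span the full dual $V^{*}$. Consequently, for each $(x,z) \in X \times Z$, the functional ``coefficient of $xz^{-1}$'' on $V$ is a fixed (that is, independent of $A,B$) $\C$-linear combination of these matrix-entry functionals. Evaluating that combination on $\overline{A}\cdot\overline{B}$ is the same linear combination of entries of the already-computed matrices $\rho(\overline{A})\rho(\overline{B})$, so every $(AB)[x,z]$ is obtained from those matrices by one fixed linear map.

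Combining the three pieces — the linear encoding of $A,B$, the family of $d_\rho \times d_\rho$ matrix multiplications indexed by $R_{\textup{sep}}$, and the linear decoding — assembles a bilinear algorithm that reduces multiplying an $|X| \times |Y|$ matrix by a $|Y| \times |Z|$ matrix to the combined bilinear multiplications of the $\rho(\overline{A})\cdot\rho(\overline{B})$. Passing from this reduction to the numerical bound $(|X|\,|Y|\,|Z|)^{\omega/3} \le \sum_{\rho} (\dim \rho)^\omega$ is then the standard tensor-power and Sch\"onhage asymptotic-sum argument used in the finite-group proof of Theorem~\ref{thm:main-TPP-finite-groups}, with the direct sum of matrix-multiplication tensors of sizes $d_\rho$ playing the role that $\C[G]$ played there. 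The main substantive new point over Cohn--Umans is the decoding step: the delicate part is confirming that ``separating'' in the sense of the forthcoming Definition~\ref{def:sep} really yields an $(A,B)$-independent linear recovery rule on the finite-dimensional space $V$, rather than some weaker pointwise notion of distinguishability that would not be usable inside a bilinear algorithm.
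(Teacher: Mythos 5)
Your proof matches the paper's argument in all essentials: form $\overline{A},\overline{B}$, compute $\rho(\overline{A})\cdot\rho(\overline{B})$ for each $\rho\in R_{\textup{sep}}$, decode $(AB)[x,z]$ by expressing the separating function $f_{x,z}$ as a fixed $\C$-linear combination of matrix-entry functions, and finish by the standard symmetrization and tensor-power step. One small overstatement worth flagging: Definition~\ref{def:sep} does not assert that the matrix entries restricted to $V=\Span(XY^{-1}YZ^{-1})$ span all of $V^{*}$ (the coefficient functionals for elements of $XY^{-1}YZ^{-1}\setminus XZ^{-1}$ need not lie in $\Fun(R_{\textup{sep}})$); it gives only that the specific coefficient-of-$xz^{-1}$ functionals lie in $\Fun(R_{\textup{sep}})$, and that weaker statement is precisely what your decoding step uses.
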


It is even conceivable that this result could be used to improve the bounds from known constructions of TPPs in finite groups, by using only a subset of the group's irreducible representations rather than all of them.

But perhaps the main payoff of Theorem~\ref{thm:main} is that it allows, for the first time, the derivation of matrix multiplication algorithms from infinite groups. This opens a huge variety of potential constructions to explore, even beyond those in Lie groups that will be the focus of the rest of the paper.

\subsection{Second main contribution: quantitative targets for proving $\omega = 2$ in classical Lie groups}
Another main contribution in this paper is to develop a series of tools and techniques, and to identify key targets to aim for, for getting good constructions using Theorem~\ref{thm:main} in Lie groups such as $\GL_n(\R)$, $\GL_n(\C)$, or the unitary group $\U_n$. Lie groups are defined as groups that are also smooth manifolds, and where the group multiplication and inverse are continuous in terms of the manifold topology. In fact, all of our constructions in this paper will be in one of these three groups, though much of the machinery we develop works for general matrix Lie groups, and it would be interesting to explore constructions in other Lie groups such as symplectic groups, orthogonal groups, exceptional simple Lie groups, or nilpotent or solvable Lie groups.

Before coming to the constructions, we highlight how the framework of Theorem~\ref{thm:main}  allows us to take the analogy from \cite{cu2003, blasiak2023matrix}, and turn it into a formal implication whose conclusion is a bound on $\omega$. To briefly recall the analogy: elementary arguments show that any TPP triple in a finite group must satisfy $|X|\,|Y|\,|Z| \leq |G|^{3/2}$, called the \emph{packing bound} because of the nature of the proof. Because $\sum d_i^2 = |G|$ in finite groups, it follows that any sequence of constructions that achieves $\omega=2$ via Theorem~\ref{thm:main-TPP-finite-groups} (not our new Theorem~\ref{thm:main}) must asymptotically \emph{meet the packing bound}, in the sense that $|X|\,|Y|\,|Z| \geq |G|^{3/2 - o(1)}$. The analogy studied in \cite{cu2003, blasiak2023matrix} is to think of Lie subgroups of dimension $d$ as ``roughly corresponding'' to finite subsets of size $q^d$. Under this analogy, if $X_n,Y_n,Z_n$ are families of Lie subgroups of Lie groups $G_n$ that satisfy the TPP, then meeting the packing bound is analogous to the condition
\begin{equation} \label{eq:Lie-packing-bound}
\dim X_n + \dim Y_n + \dim Z_n \geq (3/2 - o_n(1)) \dim G_n.
\end{equation}
A simple construction in the original Cohn--Umans paper \cite[Theorem 6.1]{cu2003} shows this is indeed possible (with additional such constructions developed in \cite{blasiak2023matrix}), and this construction forms the basis for a running example we will use throughout this paper to illustrate the development of our new framework.

\bigskip
\hrule
\vspace{-.3in}
\begin{adjustwidth}{.2in}{.2in}
\subsection*{\center Running example in $\GL_n(\R)$}
\begin{theorem}[{\cite[Theorem~6.1]{cu2003}}]
Let $ G = \GL_n(\R)$ and let $X$ be the subgroup of lower unitriangular matrices, $Y = \Orth_n(\R)$ (the orthogonal matrices), and $Z$ be the subgroup of upper unitriangular matrices. Then the triple $X, Y, Z$ satisfies the TPP.
\label{thm:running-ex-1}
\end{theorem}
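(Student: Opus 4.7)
The plan is to exploit the fact that $X$, $Y$, and $Z$ are all subgroups, so the TPP collapses to a single uniqueness statement: since $xx'^{-1} \in X$, $yy'^{-1} \in Y$, and $zz'^{-1} \in Z$, the TPP is equivalent to the claim that whenever $LQU = I$ with $L \in X$, $Q \in Y$, $U \in Z$, one must have $L = Q = U = I$. Rewriting $Q = L^{-1}U^{-1}$ and setting $\tilde L = L^{-1}$ (lower unitriangular) and $\tilde U = U^{-1}$ (upper unitriangular), this reduces to the key lemma: \emph{if an orthogonal matrix $Q$ factors as $Q = \tilde L \tilde U$ with $\tilde L$ lower unitriangular and $\tilde U$ upper unitriangular, then $\tilde L = \tilde U = I$.}

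To prove the key lemma, I would use the fact that $\tilde U$ has first column $e_1$ and $\tilde L$ has first row $e_1^T$, purely from the unitriangular hypotheses. This implies that the first column of $Q = \tilde L \tilde U$ equals the first column of $\tilde L$, namely $(1, \tilde L_{21}, \dots, \tilde L_{n1})^T$, and symmetrically that the first row of $Q$ equals the first row of $\tilde U$, namely $(1, \tilde U_{12}, \dots, \tilde U_{1n})$. Orthogonality of $Q$ forces both vectors to have unit norm, and since both already have a $1$ in their first coordinate, every other entry must vanish. Hence the first column of $\tilde L$ is $e_1$ and the first row of $\tilde U$ is $e_1^T$.

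At this stage $\tilde L$, $\tilde U$, and therefore $Q$ are all block-diagonal with a $1 \times 1$ identity block in position $(1,1)$, so $Q = \diag(1, Q')$ where $Q' = \tilde L' \tilde U'$ is an $(n-1) \times (n-1)$ orthogonal matrix admitting the same kind of factorization, to which the lemma applies by induction on $n$. This yields $\tilde L = \tilde U = I$ and hence $Q = I$, completing the proof of the TPP. The argument is short and contains no serious obstacle; the only observation of any subtlety is that orthogonality pins down both the first column of $\tilde L$ and the first row of $\tilde U$ simultaneously. The substantive interest of this example, as emphasized in the surrounding discussion, lies not in the proof but in the fact that the three subgroups together have dimension $3\binom{n}{2} = \tfrac{3}{2}\dim G - \tfrac{3}{2}n$, essentially saturating the Lie analogue of the packing bound.
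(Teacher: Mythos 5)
Your proof is correct and is essentially the paper's argument, just reframed: instead of staring at $LRU = I$ directly, you invert to $Q = \tilde L\tilde U$ and state the key step as ``the only lower-unitriangular/upper-unitriangular factorization of an orthogonal matrix is trivial.'' The core mechanism is identical in both—use orthogonality to force the off-diagonal entries of the first column and row to vanish, observe that the $(1,1)$-corner then splits off as an identity block, and induct on $n$. Your phrasing via LU decomposition is a bit cleaner conceptually but buys no additional generality; it is the same proof.
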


The dimension of $G$ is $n^2$, and the dimension of each of the three subgroups is $n^2/2- n/2$, so this construction meets the packing bound in the sense of \eqref{eq:Lie-packing-bound} \cite{blasiak2023matrix}.

\begin{proof}
For $L\in X$, $R \in Y$, and $U \in Z$ we must show $LRU = I$ implies $L = R = U = I$. The proof is by induction on $n$. The statement follows trivially when $n = 1$. For $n > 1$, consider the top-left entry of $LRU$ and observe that it equals the top-left entry of $R$. Since $LRU = I$ the top-left entry is $1$ and hence the top-left entry of $R$ is 1, and then the remainder of the first column and top row of $R$ must be zeros (since rows and columns of $R$ have length 1). At this point the left column of $L$ equals the left column of $LRU = I$ and the top row of $R$ equals the top row of $LRU = I$. If we define $L', R', U'$ to be the lower-right $(n-1) \times (n-1)$ submatrices of $L, R$, and $U$, respectively, we find that these are lower unitriangular, orthogonal, and upper unitriangular matrices for which $L'R'U' = I$, which implies $L' = R' = U' = I$ by induction. Combined with our observation about the left column and top row of $L, R$ and $L$, we conclude that $L =R =U = I$ as required.  
\end{proof}
\end{adjustwidth}
\hrule
\bigskip

But there still remains the issue of how to use such a construction, even in concert with Theorem~\ref{thm:main}, to get upper bounds on $\omega$. And for this we require a deeper dive into representation theory, which will lead us to the key quantitative goals for these constructions.

In the case of the groups $\GL_n(\R)$, $\GL_n(\C)$,  and $\U_n$, rather than focusing on choosing arbitrary collections of representations to play the role of $R_{\textup{sep}}$ in Theorem~\ref{thm:main}, we can exploit a relationship between the representations of these groups and the set of all degree-$d$ polynomials. Namely, the irreducible representations of these groups are indexed by integer partitions into at most $n$ parts, and the matrix entries of the representations corresponding to partitions of $d$, taken all together, span precisely the set of all degree-$d$ polynomial functions on the group. Careful quantitative estimates then lead us to key targets for the degree of functions that separate out the elements of $XY^{-1} YZ^{-1}$ compared to the size of the finite TPP construction:

\begin{maintheorem}[{Corollary~\ref{cor:irrepbound}, summarized}] \label{thm:main-irrep-bound}
If $X_{q,n},Y_{q,n},Z_{q,n} \subseteq \GL_n(\C)$ (or $\U_n$) satisfy the TPP and have sizes at least $q^{n^2/2 - o_n(n)}$, and there are separating polynomials for $(X_{q,n}, Y_{q,n}, Z_{q,n})$ of degree at most $q^{1 + o_q(1)}$, then Theorem~\ref{thm:main} implies $\omega=2$.
\end{maintheorem}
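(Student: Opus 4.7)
The plan is to apply Theorem~\ref{thm:main} with a particular choice of representations determined by the given degree bound. Let $D = q^{1+o_q(1)}$, and take
\[
R_{\mathrm{sep}} \;=\; \{\rho_\lambda : |\lambda| \le D,\ \ell(\lambda) \le n\},
\]
the set of irreducible polynomial representations of $G = \GL_n(\C)$ (or $\U_n$) indexed by partitions with at most $n$ parts and size at most $D$. By the classical theory of polynomial representations, the matrix coefficients of these $\rho_\lambda$ together span the full space of polynomial functions on $G$ of total degree at most $D$. Any separating polynomials of degree $\le D$ therefore lie in this span, so $R_{\mathrm{sep}}$ separates $XY^{-1}YZ^{-1}$ and Theorem~\ref{thm:main} yields
\[
(|X|\,|Y|\,|Z|)^{\omega/3} \;\le\; \sum_{|\lambda|\le D,\ \ell(\lambda)\le n} (\dim \rho_\lambda)^\omega.
\]

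The next step is to estimate the right-hand side. Cauchy's identity applied to the decomposition of $\mathrm{Sym}^d((\C^n)^*\otimes\C^n)$ under $\GL_n\times\GL_n$ gives the identity $\sum_{|\lambda|=d,\ \ell(\lambda)\le n} (\dim\rho_\lambda)^2 = \binom{n^2+d-1}{d}$, and summing over $d=0,\ldots,D$ via hockey-stick produces $\sum_{|\lambda|\le D}(\dim\rho_\lambda)^2 = \binom{n^2+D}{n^2}$. To handle general $\omega \ge 2$, I would combine this $\omega = 2$ identity with the crude upper bound $\dim\rho_\lambda \le (n+|\lambda|)^{|\lambda|} \le (n+D)^D$ from the Weyl dimension formula, writing $(\dim\rho_\lambda)^{\omega} \le (n+D)^{D(\omega-2)}(\dim\rho_\lambda)^2$ to obtain
\[
\sum_{|\lambda|\le D}(\dim\rho_\lambda)^\omega \;\le\; (n+D)^{D(\omega-2)}\binom{n^2+D}{n^2}.
\]

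Combining this with the lower bound $(|X|\,|Y|\,|Z|)^{\omega/3} \ge q^{\omega(n^2/2 - o_n(n))}$ and taking $\log_q$, the Theorem~\ref{thm:main} inequality becomes
\[
\tfrac{\omega}{2}n^2 - o_n(n) \;\le\; D(\omega-2)\log_q(n+D) \;+\; \log_q\binom{n^2+D}{n^2}.
\]
Standard binomial estimates give $\log_q\binom{n^2+D}{n^2} \le n^2(1+o_q(1))$ in the relevant regime, while $D(\omega-2)\log_q(n+D) = q^{1+o_q(1)}(\omega-2)(1+o_q(1))$. Suppose for contradiction that $\omega = 2+\epsilon$ with $\epsilon > 0$ fixed. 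The left-hand side is then $(1+\epsilon/2)n^2 - o(n)$, whereas the right-hand side is $n^2(1+o_q(1)) + \epsilon \cdot q^{1+o(1)}$. Choosing $n$ to grow faster than $\sqrt{q}$ (possible because the hypothesis is asymptotic in both parameters) absorbs the $\epsilon q^{1+o(1)}$ term, leaving the right-hand side bounded by $n^2(1+o(1))$, a contradiction. Letting $\epsilon\to 0$ gives $\omega = 2$.

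The delicate point is aligning the two asymptotic parameters $q$ and $n$. The $o_q(1)$ error in $\log_q D$ is multiplied by $n^2$ in the binomial estimate, so to keep the final exponent error of order $o_n(n)$ one needs $n^2 \cdot o_q(1) = o(n)$, i.e., the convergence in $q$ must be fast relative to $n$. Working through the nested quantifiers---matching the $o_n(n)$ deficit in the hypothesis on $|X|,|Y|,|Z|$ to the $o_q(1)$ slack in the degree bound, presumably by coupling $q = q(n)$ appropriately---is the main technical obstacle and the reason the precise corollary is labeled a summary. Everything else is a straightforward application of Theorem~\ref{thm:main} together with classical representation-theoretic identities for $\GL_n$.
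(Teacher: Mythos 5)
The key flaw is in your dimension bound. You claim $\dim\rho_\lambda \le (n+|\lambda|)^{|\lambda|}$ ``from the Weyl dimension formula,'' but the Weyl product $\prod_{1\le i<j\le n}\frac{\lambda_i-\lambda_j+j-i}{j-i}$ has $\binom{n}{2}$ factors, each at most $n+|\lambda|$, giving the much sharper bound $\dim\rho_\lambda \le (n+|\lambda|)^{\binom{n}{2}}$ (the exponent $|\lambda|$ comes from counting semistandard tableaux, not from Weyl). In the regime of interest $s = q^{1+o_q(1)} \gg n^2 > \binom{n}{2}$, your bound is exponentially weaker. The paper's Lemma~\ref{lem:maxdim} gives $\dim\rho_\lambda \le s^{\binom{n}{2}}$, so that $\log_q d_{\max}^{\omega-2} \approx (\omega-2)\binom{n}{2}$, which is $O(n^2)$ and hence comparable to the other terms; this lets them fix $n$ and send $q\to\infty$.

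Your weak bound makes $\log_q d_{\max}^{\omega-2} \approx (\omega-2)q^{1+o(1)}$, which is huge, and you compensate by choosing $n\gg\sqrt{q}$ so that $n^2$ dominates. But this scaling regime is vacuous: for the hypothesis to be satisfiable \emph{at all} (even with the true $\omega=2$), Theorem~\ref{thm:main} already forces $q^{n^2-o(n)} \le \binom{n^2+D}{n^2}$, which requires roughly $D \gtrsim qn^2$; combined with $D \le q^{1+o_q(1)}$ this forces $n = q^{o(1)}$, i.e.\ $n$ must grow dramatically \emph{slower} than any power of $q$. When $n \gg \sqrt{q}$ instead, $\binom{n^2+D}{n^2} = \binom{n^2+D}{D}$ has $\log_q$ of order only $D\log_q(n^2/D) = o(n^2)$, and the inequality $(|X||Y||Z|)^{2/3}\le\sum(\dim\rho)^2$ already fails. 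So the ``contradiction'' you derive holds independent of whether $\omega>2$; it is only detecting that no construction of the hypothesized sizes and degrees exists for those $(q,n)$. This circularity also infects the final paragraph: you simultaneously ask that $n\gg\sqrt{q}$ (so $n^2$ absorbs the $\epsilon q^{1+o(1)}$ term) and that $q$ grow fast relative to $n$ (so $n^2\cdot o_q(1)=o(n)$); these pull in opposite directions and cannot both be arranged. The correct route is to replace your bound by $d_{\max} \le s^{\binom{n}{2}}$, after which the argument closes in the regime $n$ fixed, $q\to\infty$, exactly as the paper does: solve the Theorem~\ref{thm:main} inequality for $\omega$, check the denominator is positive (using $g(n)<n/2$), and let $s\to\infty$ to get $\omega\le 2+\delta$ for any $\delta>0$.
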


Note how our new framework (Theorem~\ref{thm:main}) has allowed us to take the analogy above whereby $d$-dimensional subgroups correspond to finite sets of size $q^d$, and turn it into a theorem that actually implies a bound on $\omega$ out of any such construction, rather than merely being an analogy. However, in this setup, as $\dim G = n^2$, we see that the bound we need on the size of the TPP triple is not $|X|\,|Y|\,|Z|\geq q^{(3/2 - o_n(1))\dim G}$ as suggested in the previous Lie analogy of the packing bound \cite{cu2003,blasiak2023matrix}, but is slightly tighter, of the form $q^{(3/2-o_n(1/n))\dim G}$, and the example in Theorem~\ref{thm:running-ex-1} above falls short of this latter bound.

\textbf{Our challenge} is thus to find TPP constructions in $\GL_n$ or $\U_n$ that meet the bounds of Theorem~\ref{thm:main-irrep-bound}: three subsets in $\GL_n$ or $\U_n$ satisfying the TPP, of size at least $q^{n^2/2 - o_n(n)}$, and admitting separating polynomials of degree at most $q^{1 + o_q(1)}$. 

\begin{remark} \label{rmk:dim}
Although our framework is more general, in all our constructions in this paper, we will construct the finite sets $X_{q,n}, Y_{q,n}, Z_{q,n}$ as subsets of  submanifolds $X_n, Y_n, Z_n$ that satisfy the TPP, where $|X_{q,n}| \sim q^{\dim X_n}$, $|Y_{q,n}| \sim q^{\dim Y_n}$, and $|Z_{q,n}| \sim q^{\dim Z_n}$. In that setting we can make two further remarks. The first is that the TPP implies that the pairwise maps $X_n \times Y_n \to G_n$ given by $(x,y) \mapsto xy$ are injective, and by smoothness of the multiplication map, this implies that $\dim X_n + \dim Y_n \leq \dim G_n$. In particular,  the best possible is $\dim X_n = (1/2) \dim G_n$. (One of the three sets could have larger dimension, but then the arithmetic mean of the three dimensions would be less than $(1/2) \dim G_n$ and the bound on $\omega$ would be worse.) We will see that sets of this dimension would be just good enough to yield $\omega=2$ if the representation dimensions were favorable. 

The second remark is that, over an algebraically closed field, if $G_n$ is an algebraic group and $X_n, Y_n, Z_n$ are not just submanifolds but also subvarieties, then $\dim X_n + \dim Y_n + \dim Z_n \leq \dim G_n$ \cite[Theorem~4.7]{blasiak2023matrix}. In particular, when $|X_{q,n}| \sim q^{\dim X_n}$ and similarly for $Y$ and $Z$, over an algebraically closed field no nontrivial bound on $\omega$ (strictly less than $3$) can be gotten from three sub\emph{varieties}. However, subvarieties over non-algebraically closed fields are not in general subject to this restriction; indeed, our constructions in this paper will mostly be \emph{real} algebraic subvarieties.
\end{remark}

\subsection{Third main contribution: optimal degree using border rank, Lie algebras, and invariant theory}

Our third main contribution is to show how to very nearly meet the conditions of Theorem~\ref{thm:main-irrep-bound}, using border rank, Lie algebras, and invariant theory. We also believe these techniques will have further uses. Our main theorem coming out of these constructions is:

\begin{maintheorem}[{Summary of Theorem~\ref{thm:construction}}] \label{thm:main-construction}
For any $n$ and $q$, there are three subsets $X_q, Y_q, Z_q \subseteq \U_n$, all of size at least $q^{n^2/4-n/4}$, which satisfy the TPP and admit border-separating polynomials of degree $O(q)$.
\end{maintheorem}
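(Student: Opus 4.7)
The plan is to build the three subsets as finite samples sitting inside three real-algebraic submanifolds $X_n, Y_n, Z_n \subseteq \U_n$ modeled on the lower unitriangular / orthogonal / upper unitriangular trichotomy of Theorem~\ref{thm:running-ex-1}. For $Y_n$ I would take (a subset of) $\Orth_n$, which embeds naturally in $\U_n$ via its real entries. Since no nontrivial unitriangular matrix is unitary, for $X_n$ and $Z_n$ I would use compact analogues obtained by choosing strictly lower (respectively upper) triangular Hermitian subspaces of $\Lie \U_n$ of the target dimension and either exponentiating or applying a Cayley-type transform. Finite samples $X_q, Y_q, Z_q$ come from a $q$-spaced arithmetic grid in the chosen Lie-algebra coordinates, and a parameter count gives the claimed lower bound $q^{n^2/4 - n/4}$.

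For the TPP I would adapt the column/row induction in Theorem~\ref{thm:running-ex-1}. The essential feature of the real argument is that in any product $LRU$ with $L$ lower unitriangular, $R$ orthogonal, and $U$ upper unitriangular, the top-left entry of the product is the top-left entry of $R$, after which orthogonality pins down the first row and column of $R$ and allows one to strip off a block and recurse. The same inductive peel should go through in the unitary setting once $X_n$ and $Z_n$ have been chosen so that their first column and first row agree with those of the identity; the extra care needed is only in handling complex conjugation when reading off unit-norm constraints.

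The main payload is producing border-separating polynomials of degree $O(q)$. For this I would invoke the border-rank / Lie-algebra / invariant-theory reduction developed in the earlier sections, which converts the task into constructing a single polynomial in an invariant ring determined by two out of the three submanifolds, together with a matching finite set of group elements. For $Y_n = \Orth_n$, the relevant invariant ring admits a classical description in terms of symmetric-matrix entries on $\GL_n / \Orth_n$, and I would exhibit a degree-$O(q)$ polynomial in these coordinates --- most naturally a determinantal or Vandermonde-type expression tailored to the $q$-grid --- that separates the off-diagonal products $X_q Y_q^{-1} Y_q Z_q^{-1} \setminus X_q Z_q^{-1}$ only up to a one-parameter degeneration, which is precisely what ``border-separating'' allows.

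The main obstacle is keeping the degree at $O(q)$ rather than the $\Omega(q^2)$ or worse that any naive grid-separating polynomial would produce. Border rank is what makes the optimal degree achievable, by permitting the separation to appear only as the leading term of a limit, but realizing this concretely requires the degeneration to be compatible with the $\Orth_n$-invariant structure and with the Lie-algebra directions used to sample $X_n$ and $Z_n$. Verifying this compatibility while maintaining the TPP is the delicate step, and it is precisely what forces the subgroup dimensions down to $n^2/4 - n/4$ rather than the $\dim G / 2 - o(n)$ that would, via Theorem~\ref{thm:main-irrep-bound}, yield $\omega = 2$.
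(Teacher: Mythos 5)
Your plan diverges from the paper's in ways that matter, and the most important divergence is exactly where your proposal would break down.

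The paper does \emph{not} build $X,Y,Z$ from the lower-unitriangular / orthogonal / upper-unitriangular trichotomy of Theorem~\ref{thm:running-ex-1}, and this is not incidental. The running example in Section~\ref{sec:irrepbound} shows that the orthogonal-group construction yields separating polynomials of degree $O(q^2)$, coming from the $O(q^2)$ distinct inner products among $q$-grid unit vectors, and the paper explicitly remarks that replacing this grid with a cleverer set of unit vectors cannot reduce the count to $O(q^{2-\varepsilon})$ because of the high-dimensional Erd\H{o}s distinct-distances theorem \cite{solymosi2008near}. So any construction whose separating invariant ultimately reads off real inner products of $q$-grid vectors is stuck at degree $\Omega(q^2)$. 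Your plan --- $Y_n$ a subset of $\Orth_n$, separation via the $\GL_n/\Orth_n$ (symmetric-matrix) invariants, and a ``Vandermonde-type'' polynomial on a $q$-grid --- sits squarely in this obstructed regime. Border rank and the Lie-algebra reduction of Lemma~\ref{lem:split} make the bookkeeping cleaner, but they do not reduce the number of values the invariant must distinguish; they cannot by themselves buy you a factor of $q$ in degree.

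What the paper actually does to get degree $O(q)$ is structurally different on every count. The containing group is the non-compact unitary group $\SU_{n/2,n/2}$, not $\U_n$, and the three sets are three \emph{conjugates} of (a piece of) the maximal compact $\U_n \cap G = \U_{n/2} \times \U_{n/2}$ by a fixed matrix $D$: $X = D^{-1}\exp(\varepsilon S)D$, $Y = \exp(\varepsilon S)$, $Z = D\exp(\varepsilon S)D^{-1}$, with $S$ the block-diagonal skew-Hermitian matrices with zero diagonal. The TPP is not proved by a column/row induction; Theorem~\ref{thm:three-conjugates-unitary} proves it via the Birkhoff--von Neumann theorem and Cauchy--Schwarz applied to $\Tr(D^*M^*D^*DMD)$, and Lemma~\ref{lem:kvn-proof-eps} gives the infinitesimal version used in the border-rank setting. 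The invariant ring is generated (for the relevant purpose) by the characteristic-polynomial coefficients of $D^*M^*D^*DMD$ (Lemma~\ref{lem:invariantring}), and Lemma~\ref{lem:polynomial-complex-conjugate} is needed to make these genuine polynomials by expressing complex conjugates via entries of $M^{-1}$, which is where $\SU_{n/2,n/2}$ (rather than $\U_n$) is essential. Finally --- and this is the degree-saving idea --- the Lie-algebra grid for $Y_q$ uses \emph{complex} entries $a + ib$ with $a,b \in [-\lceil\sqrt{q}/2\rceil, \lceil\sqrt{q}/2\rceil]$, so that the separating invariant $\sum_{i<j}(|d_i|^2-|d_j|^2)^2|C[i,j]|^2$ lives (after clearing denominators) in a set of $O(q)$ integers rather than $O(q^2)$. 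This is a genuinely different combinatorial trick from choosing a better set of unit vectors, and it sidesteps the Erd\H{o}s obstruction entirely.

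Two smaller inaccuracies in your write-up: $Y_n$ in the paper is not inside $\Orth_n$ (it is inside $\U_{n/2}\times\U_{n/2}$), and the reason the dimension is $n^2/4 - \Theta(n)$ rather than $n^2/4 - o(n)$ is the block-diagonal, zero-diagonal structure of $S$, not a degree/TPP trade-off as you suggest.
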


Note that this falls short of the conditions needed for Theorem~\ref{thm:main-irrep-bound} in only two ways: we get a construction of size $q^{(1/2)\dim G - \Theta(n)}$ where $G = \U_n$, whereas Theorem~\ref{thm:main-irrep-bound} would require both that the construction be in $\GL_n$, and that it approach half the dimension just slightly faster, with sets of size $q^{(1/2)\dim \GL_n - o_n(n)}$, rather than our current $q^{(1/2) \dim \U_n - \Theta(n)}$. In addition to the sizes being very nearly right, the degree bound \emph{does} satisfy the degree bound required by Theorem~\ref{thm:main-irrep-bound} (even slightly better than is needed: we get $O(q)$ whereas Theorem~\ref{thm:main-irrep-bound} only needs degree at most $q^{1 + o_q(q)}$). We additionally point out that in the unitary groups, except for lower-order terms in the exponent, both our size and degree are tight: the construction comes from submanifolds, which can have dimension at most $(1/2)\dim U_n$ (Remark~\ref{rmk:dim}), and unconditionally at such sizes one cannot have separating polynomials degree $q^{1-c}$ for any $c > 0$ (without implying the contradiction $\omega < 2$).

While in principle all one needs here is a sequence of finite sets $X_q, Y_q, Z_q$ for infinitely many $q$, an appealing way to get such a sequence is to find Lie subgroups or submanifolds $X,Y,Z \subseteq \GL_n(\C)$---as in Theorem~\ref{thm:running-ex-1}---and then let $X_q$ be some nicely constructed finite subset of $X$, etc. For example, when $X$ is the subgroup of lower unitriangular matrices, we can take $X_q$ to be the the lower unitriangular matrices with entries in $[0,q] \cap \Z$. And indeed, this is how the construction of Theorem~\ref{thm:main-construction} proceeds.

To get our constructions, we will combine three additional ingredients on top of Theorems~\ref{thm:main} and \ref{thm:main-irrep-bound}: Lie algebras, border rank, and invariant theory. Here we give a brief overview of these ingredients and how they mix together.

\textbf{Lie algebras.} In Sophus Lie's original development of Lie groups in the late 1800s, he realized that many questions about these continuous groups can be reduced to simpler questions of linear algebra, by focusing on the corresponding Lie algebras, which are, in particular, vector spaces, rather than more complicated manifolds. The Lie algebra $\Lie(G)$ associated to a Lie group $G$ is ``just'' the tangent space to $G$ (remember $G$ is also a manifold) at the identity element. The Lie algebra is then a vector space. While the group structure of $G$ induces an algebraic structure on $\Lie(G)$, in this paper we will have no need of its algebraic structure. We will only need a few basic facts (see, e.\,g., \cite{Pollatsek, baker, fulton1991representation}):
\begin{itemize}
\item The Lie algebra of $\GL_n(\C)$ is $M_n(\C)$, the space of all $n \times n$ complex matrices.

\item The Lie algebra of the orthogonal group $O_n(\R)$ consists precisely of all skew-symmetric real matrices.

\item The Lie algebra of the unitary group consists of the skew-Hermitian matrices.

\item If $G$ is a matrix Lie group---a Lie group that is a subgroup and submanifold of $\GL_n(\C)$---then its Lie algebra $\Lie(G)$ is a linear subspace of matrices. And if $A \in \Lie(G)$, then for all sufficiently small $\varepsilon > 0$, $\exp(\varepsilon A)$ (using the ordinary power series for the matrix exponential) is in $G$.
\end{itemize}
As with many problems on Lie groups, we would like to take advantage of the simpler, linear-algebraic nature of Lie algebras in our constructions.

\textbf{Border rank.} It turns out that the key tool for using Lie algebras in our setting is the concept of \emph{border rank}. Although known to  Terracini 100 years ago, border rank was rediscovered in the context of matrix multiplication by Bini \emph{et al.} \cite{bini,bini1979n2}. Bini had developed computer code to search for algebraic algorithms for matrix multiplication, and some of the coefficients in his numerical calculations kept going off to infinity. At first he thought this was an error in his code, but in fact it reflects the fundamental phenomenon of border rank: for each fixed size $n_0$, it is possible that there is a sequence of algorithms, none of which correctly multiply $n_0 \times n_0$ matrices, but which, \emph{in the limit}, in fact do so. If the algorithms in the sequence use only $r$ non-constant multiplications, we say that matrix multiplication has border rank at most $r$. The border rank is always at most the ordinary rank, and it turns out that the exponent of matrix multiplication is the same whether measured with ordinary rank or border rank. Border rank has played an important role in essentially all newly developed matrix multiplication algorithm since then.

A bit more formally, a ``border algorithm'' for matrix multiplication can be viewed as a single bilinear algorithm that has coefficients that are Laurent series in $\varepsilon$---that is, power series that allow finitely many negative powers of $\varepsilon$ as well---and such that it computes matrix multiplication in the limit as $\varepsilon \to 0$, that is, it computes a function of the form $(A,B) \mapsto AB + O(\varepsilon)$. (Note that, despite the algorithm itself being allowed to contain $1/\varepsilon$ in its intermediate operations, corresponding to Bini's coefficients that were going off to infinity, the function computed at the end should have such negative powers of $\varepsilon$ cancel.) 

\textbf{Combining Lie algebras and border rank.} It turns out that combining border rank with Lie algebras is very natural; here we exhibit just two advantages to doing so. If $X,Y,Z \subseteq \GL_n(\C)$ are Lie subgroups that satisfy the TPP (as in Theorem~\ref{thm:running-ex-1}), then we can take advantage of the simple linear-algebraic nature of their Lie algebras to help construct finite subsets of $X,Y,Z$. An example we will return to is that if $Y = O_n(\R)$ is the orthogonal group, it is a bit tricky to choose $q^{\dim O_n}$ many elements of $Y$ in a principled way directly. But since $\Lie(Y)$ consists of all the skew-symmetric matrices, we can get a finite subset of $Y$ by simply considering 
\[
Y'_\varepsilon = \{\exp(\varepsilon A) : A \text{ skew-symmetric with all } A_{ij} \in [0,q] \cap \Z\}
\]
for $\varepsilon > 0$ sufficiently small.

A second advantage can be gotten by not choosing $\varepsilon$ as above to be a fixed but small value, but rather allowing to the $\varepsilon$ used in the expression $\exp(\varepsilon A)$ to be the same as the parameter $\varepsilon$ used in the definition of border rank. In this setting, instead of finding a set of functions that exactly separates $XY^{-1} Y Z^{-1}$ according to Definition~\ref{def:sep}, we can find functions that do so only up to $O(\varepsilon)$ (Definition~\ref{def:sep-border}). This both gives us more freedom in the construction, and combines very naturally with the Lie algebraic construction suggested above. Namely, when all the elements of  $X',Y',Z'$ are of the form $\exp(\varepsilon A)$ for various $A$ in their Lie algebras, we see that an expression of the form $x y^{-1} y' z^{-1}$ becomes 
\[
\exp(\varepsilon A) \exp(-\varepsilon B) \exp(\varepsilon B') \exp(-\varepsilon C) = I + \varepsilon(A - B + B' - C) + O(\varepsilon^2).
\]
Our border-separating polynomials can then directly access $A-B+B'-C$ by subtracting off $I$ and dividing by $\varepsilon$; this leaves additional $O(\varepsilon)$ terms, but in the border setting that is still allowed. Thus combining Lie algebras and border rank lets us shift the problem from finding separating polynomials in the entries of a product of matrices and their inverses to finding (border-)separating polynomials in the entries of the simpler linear combination $A-B+B'-C$.

However, the linear combination $A-B+B'-C$ still ``mixes'' entries from the three subalgebras, and this causes some difficulty in the the task of designing (border-)separating polynomials. Our final ingredient is to use invariant theory to simplify this task even further.

\subsection{Fourth main contribution: leveraging invariant polynomials}
If we restrict our attention to polynomials $p(M)$ that are invariant under left multiplication by $X$ and right multiplication by $Z^{-1}$, that is, $p(xMz^{-1}) = p(M)$ for all $x \in X, z \in Z$, we can get direct access to the $B'-B$ term above. Namely, if $p$ is such an invariant polynomial, then
\[
p(\exp(\varepsilon A) \exp(-\varepsilon B) \exp(\varepsilon B') \exp(-\varepsilon C)) = p(\exp(-\varepsilon B) \exp(\varepsilon B')) = p(I + \varepsilon(-B+B') + O(\varepsilon^2)),
\]
where the first equality occurs because $\exp(\varepsilon A)$ is in $X$ and $\exp(\varepsilon C)$ is in $Z$, and $p$ is invariant under the action of $X$ and $Z$. We codify this idea into the following key lemma, which is used in the proof of Theorem~\ref{thm:main-construction}. We state it in a simplified form, which is not correct as stated but captures the spirit of a special case. See Lemma~\ref{lem:split} for the full and correct statement.

\begin{mainlemma}[{Oversimplified version of Lemma~\ref{lem:split}}] \label{lem:main}
Suppose $X,Y,Z$ are three Lie subgroups of $\GL_n(\C)$ that satisfy the TPP. If $Y'_\varepsilon$ is a finite set of $\varepsilon$-parametrized families in $Y$, and there is a polynomial $p_\varepsilon(g)$ that is invariant under left multiplication by $X$ and right multiplication by $Z$, such that
\[
p_\varepsilon(g) = \begin{cases}
1 + O(\varepsilon) & \text{ if } g = I \\
0 + O(\varepsilon) & \text{ if } g \in (Y'_\varepsilon)^{-1} Y'_{\varepsilon} \setminus \{I\}
\end{cases}
\]
then there are finite subsets $X'_\varepsilon, Y'_\varepsilon$ of $\varepsilon$-parametrized elements of $X$ and $Z$, of sizes $q^{\dim X}$ (resp., $q^{\dim Z}$) and border-separating polynomials for $(X'_\varepsilon, Y'_\varepsilon, Z'_\varepsilon)$ of degree $\deg p + O(q)$.
\end{mainlemma}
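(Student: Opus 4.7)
The plan is to build $X'_\varepsilon$ and $Z'_\varepsilon$ as exponentials of integer lattices in the Lie algebras $\Lie(X)$ and $\Lie(Z)$, and to assemble separating polynomials of degree $\deg p_\varepsilon + O(q)$ by combining the given $(X, Z)$-invariant $p_\varepsilon$ with low-degree lattice selectors. Concretely, fix integer bases of $\Lie(X)$ and $\Lie(Z)$ and set
\[
X'_\varepsilon = \{\exp(\varepsilon A) : A \in \Lambda_X\}, \qquad Z'_\varepsilon = \{\exp(\varepsilon C) : C \in \Lambda_Z\},
\]
where $\Lambda_X, \Lambda_Z$ are the integer linear combinations of those bases with coefficients in $\{0, 1, \ldots, q-1\}$, so $|X'_\varepsilon| = q^{\dim X}$ and $|Z'_\varepsilon| = q^{\dim Z}$. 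For $g = x y^{-1} y' z^{-1}$ with $x = \exp(\varepsilon A)$, $z = \exp(\varepsilon C)$, and $y^{-1}y' = I + \varepsilon D + O(\varepsilon^2)$, Taylor expansion yields $g = I + \varepsilon(A + D - C) + O(\varepsilon^2)$.

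The crucial use of invariance is that $p_\varepsilon(g) = p_\varepsilon(y^{-1}y')$, which by hypothesis is a border $\delta$-function at the identity on $(Y'_\varepsilon)^{-1}Y'_\varepsilon$. I would construct the separating family from three kinds of factors: (i) the matrix entries of the representations appearing in the decomposition of $p_\varepsilon$, which all have degree at most $\deg p_\varepsilon$ and whose span contains border-$\delta$ polynomials $p_\varepsilon^{(\eta)}$ selecting each $\eta \in (Y'_\varepsilon)^{-1}Y'_\varepsilon$; (ii) degree-$O(q)$ Lagrange-style selectors $\Pi^X_{A_0}(g)$ singling out each $A_0 \in \Lambda_X$, built from linear functionals of $(g-I)/\varepsilon$; and (iii) the analogous $\Pi^Z_{C_0}$. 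The full family consists of products $\Pi^X_{A_0} \cdot p_\varepsilon^{(\eta)} \cdot \Pi^Z_{C_0}$, each of degree at most $\deg p_\varepsilon + O(q)$; I would verify that this family border-separates $X'_\varepsilon(Y'_\varepsilon)^{-1}Y'_\varepsilon(Z'_\varepsilon)^{-1}$.

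The main obstacle is that $\Lie(X), \Lie(Y), \Lie(Z)$ generally do not sum directly to $\Lie(G)$---in the running example of Theorem~\ref{thm:running-ex-1} their dimensions already exceed $\dim G$---so the linear combination $A + D - C$ visible in $(g-I)/\varepsilon$ does not on its own determine the individual summands, and the ``projection onto $\Lie(X)$'' used in $\Pi^X_{A_0}$ is ambiguous. The $(X, Z)$-invariance of $p_\varepsilon$ is exactly the tool for resolving this: once $p_\varepsilon^{(\eta)}$ pins down the $Y$-part, the $X$- and $Z$-selectors only need to distinguish $(A, C)$ on the remaining quotient, and the cross-term ambiguities between $A$, $D$, and $C$ get absorbed by the border-rank slack of $O(\varepsilon)$. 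The technical core is therefore to show both that enough border-$\delta$ translates $p_\varepsilon^{(\eta)}$ live in the span of matrix entries of representations of degree at most $\deg p_\varepsilon$, and that the higher-order Taylor corrections arising when the three factors are multiplied remain $O(\varepsilon)$ and do not destroy separation.
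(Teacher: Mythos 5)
Your structural plan---exponentiating lattices in $\Lie(X)$ and $\Lie(Z)$, and building each separating polynomial as a product of the given invariant $p_\varepsilon$ with low-degree Lagrange selectors in $(g-I)/\varepsilon$---is the same basic architecture as the paper's proof of Lemma~\ref{lem:split}, where the selector factor is called $r_{a,b}$. But there are two genuine problems with the proposal as written, one of which is the heart of the matter.

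\textbf{The missing reparametrization.} You assert that when the $Y$-part of $g=xy^{-1}y'z^{-1}$ is nontrivial, the ``cross-term ambiguities between $A$, $D$, and $C$ get absorbed by the border-rank slack of $O(\varepsilon)$.'' This is precisely what does \emph{not} happen automatically, and glossing over it is the gap. The selector $\Pi^X_{A_0}\cdot\Pi^Z_{C_0}$ is a polynomial of degree $O(q)$ in $(g-I)/\varepsilon$, so it can contribute terms of size up to $\varepsilon^{-O(q)}$. When $y\ne y'$, the invariance gives $p_\varepsilon(g)=p_\varepsilon(y^{-1}y')=O(\varepsilon)$, but $O(\varepsilon)\cdot\varepsilon^{-O(q)}$ is not $O(\varepsilon)$; it diverges. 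The paper fixes this by reparametrizing $p_0$ to $p_0'(g,\varepsilon)=p_0(g,\varepsilon^t)$ (and $Y$ to $Y(\varepsilon^t)$) with $t$ exceeding the degree of the selector, so that the $O(\varepsilon^t)$ from $p_0'$ dominates the worst-case $\varepsilon^{-\deg r}$ from the selector. Without this step there is no control on the product, and the family you describe would not border-separate. This is also why Lemma~\ref{lem:split} concludes with a \emph{reparametrization} $Y'$ of $Y$ rather than $Y$ itself.

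\textbf{Unnecessary and unjustified $\eta$-translates.} The factors $p_\varepsilon^{(\eta)}$ for each $\eta\in(Y'_\varepsilon)^{-1}Y'_\varepsilon$ are not needed, and the claim that the span of representative functions of $p_\varepsilon$ contains border-$\delta$ functions at every such $\eta$ is unargued and not generally true. A separating family $\{f_{x,z}\}$ only has to pick out $xz^{-1}$ and kill everything else in $XY^{-1}YZ^{-1}$; for the ``everything else'' with nontrivial $Y$-part you do not need to identify the $Y$-part, you just need the output to be $O(\varepsilon)$. The paper's proof uses only the single $\delta$-function $p_0$ at the identity, together with the reparametrization trick above, to achieve exactly this. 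Finally, you also need to verify that $X',Z'$ satisfy the double product property (so that the Lagrange selectors can actually discriminate $(A,C)$ from $(g-I)/\varepsilon$ when $y=y'$); this requires the projection maps $p_X,p_Z$ that invert $a\mapsto f_X(a)-f_Z(b)$, which follow from the TPP-induced DPP of $X,Z$ via Lemma~\ref{lem:disjoint}, but you should say so explicitly rather than wave at ``linear functionals of $(g-I)/\varepsilon$.''
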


Thus, once we have a TPP construction of the ``right'' dimension (e.\,g., according to Theorem~\ref{thm:main-irrep-bound}), Lemma~\ref{lem:main} implies that instead of finding appropriate finite subsets of $X,Y,Z$ and a \emph{set} of separating polynomials, to get $\omega=2$ all we have to do is find an appropriate finite subset of $Y$ and a \emph{single} polynomial $p$ as in the lemma, whose degree is $O(q)$.

When $X$ and $Z$ are well-studied subgroups, the ring of their invariant polynomials is often well understood, and this represents a significant simplification of the construction problem. The full detail of Lemma~\ref{lem:split} is more complicated, but the additional complications in the statement of the lemma give us even further simplifications of the construction problem, which we take advantage of in our constructions in the running example and in Section~\ref{sec:construction}. We also expect the techniques of this lemma to have further uses for additional constructions in the future.

\subsection{Paper outline}
In Section~\ref{sec:framework-thm} we give the necessary definitions and proof of Theorem~\ref{thm:main}, and in Section~\ref{sec:border} we give its border rank version. In Section~\ref{sec:irrepbound} we discuss the needed background on the representation theory of $\GL_n(\C)$---much of which we can use in a black-box fashion---and prove Theorem~\ref{thm:main-irrep-bound}. In Section~\ref{sec:invariant} we show how to combine invariant theory, border rank, and Lie algebras to simplify the construction task; the key here is Lemma~\ref{lem:split}. In Section~\ref{sec:construction} we use the preceding ideas to give the construction that proves Theorem~\ref{thm:main-construction}. Finally, in Section~\ref{sec:conclusion} we conclude with our outlook and several open problems suggested by our framework.

Throughout the development of the framework in Section~\ref{sec:framework}, we continue the example from Theorem~\ref{thm:running-ex-1} as a running example, and we show how each piece of the framework can be realized in that case.

The paper contains complete proofs of all claims, though a small number are deferred to Appendix~\ref{appendix} to help maintain the flow.

\section{A group-theoretic framework for infinite groups}
\label{sec:framework}
In this section we describe the group-theoretic framework for obtaining matrix multiplication algorithms in infinite groups. The basic framework is in the next subsection, followed by a border-rank version for the important case of Lie groups. Subsection~\ref{sec:irrepbound} proves some key bounds on the dimensions of irreducible representations of $\GL_n$ and $\U_n$, which are the containing groups for our constructions in this paper. Finally subsection~\ref{sec:invariant} describes machinery that reduces the main design task to finding a single separating polynomial in an invariant ring. These components come together in Section~\ref{sec:construction}, where we give a construction achieving optimal degree separating polynomials. 

\subsection{Algorithms from the TPP in infinite groups}
\label{sec:framework-thm}

Let $G$ be a group---not necessarily finite---with finite subsets $X, Y,Z$ satisfying the TPP. Then we can embed matrix multiplication into the group algebra $\C[G]$ as in the case of finite groups. Note that $\C[G]$, where $G$ may be infinite, consists of formal sums $\sum_{g \in G} \alpha_g g$, but now where each such sum has at most finitely many nonzero terms. Multiplication is as in the finite case.

To multiply a complex matrix $A$ of size $|X| \times |Y|$ by a complex matrix $B$ of size $|Y| \times |Z|$, we follow the approach described in the introduction. Indexing $A$ by $X \times Y$ and $B$ by $Y \times Z$, we define elements
\[
\overline{A} = \sum_{x \in X,y \in Y} A[x,y] (xy^{-1}) \quad\textup{and}\quad \overline{B} =
  \sum_{y \in Y,z\in Z} B[y,z](yz^{-1})
\]
of $\C[G]$ and observe that, as in the finite group case, the TPP implies that
\begin{equation} \label{eqn:group-algebra}
\overline{A}\cdot \overline{B} = \sum_{x \in X,z\in Z} (AB)[x,z](xz^{-1}) + E,
\end{equation}
where $E \in \C[G]$ is supported on $XY^{-1}YZ^{-1} \setminus XZ^{-1}$.

However, as the group algebra is no longer finite-dimensional when $G$ is infinite, it is not immediately clear that the multiplication in the group algebra expressed in \eqref{eqn:group-algebra} can be carried out by a finite algorithm, even despite the fact that all the sums involved are themselves finite. One of our key new innovations is to introduce a new viewpoint on such a construction that will enable us to get a finite bilinear algorithm out of the above construction.

\begin{definition} \label{def:sep}
Given subsets $X,Y,Z \subseteq G$ of a group $G$, a \emph{set of separating functions for $(X,Y,Z)$} is a collection of functions $\{f_{x,z}\colon G \to \C \mid x \in X, z \in Z\}$ such that
\[
f_{x,z}(g) = \begin{cases} 1 & \text{if } g=xz^{-1}, \text{ and}\\
                 0 & \text{if } g \in XY^{-1} Y Z^{-1}
                                       \setminus\{xz^{-1}\}.\end{cases}
\]
\end{definition}

Such functions always exist, but for them to be useful in matrix multiplication algorithms we need them to be in some sense ``simple.'' To formulate the relevant notion---in which ``simple'' will ultimately imply ``computable by smaller matrix multiplications''---we recall a standard definition from representation theory. Given a finite-dimensional  representation $\rho \colon G \to \GL_n(\C)$, a \emph{representative function} of $G$ associated to $\rho$ (see, e.\,g., Procesi's book \cite[\S 8.2]{procesi}\footnote{The notion of representative function we use here is fairly standard; if one consults Procesi's book, one will see that his book works in the setting where $G$ is a topological group, and requires the representations involved to be continuous, but the definition we use here works just as well. When $G$ is a Lie group, as in our constructions later in the paper, the representations we use will in fact be continuous in the natural manifold topology on $G$ rather than the discrete topology.}) is any function $G \to \C$ that is in the linear span of the functions $\{ (g \mapsto \rho(g)_{i,j}) \mid i,j \in [n]\}$. Note that if $\rho,\rho'$ are equivalent representations, they have the same set of representative functions. If $\mathcal{I}$ is a set of representations, we write $\Fun(\mathcal{I})$ for the $\C$-linear span of all the representative functions of the representations in $\mathcal{I}$.

\begin{theorem} \label{thm:framework}
Let $G$ be a group (not necessarily finite), with finite subsets $X,Y,Z$ satisfying the TPP. If $R_{\textup{sep}}$ is a finite set of finite-dimensional complex representations of $G$ such that $\Fun(R_{\textup{sep}})$ contains a set of separating functions for $(X,Y,Z)$, then 
\[
\left(|X| \, |Y| \, |Z| \right)^{\omega/3} \leq \sum_{\rho \in R_{\textup{sep}}} (\dim \rho)^\omega.
\]
In particular, for $D = \sum_{\rho \in R_{\textup{sep}}} (\dim \rho)^2$ and $d_{max} = \max\{\dim \rho : \rho \in R_{\textup{sep}}\}$,
\[
\left(|X| \, |Y| \, |Z| \right)^{\omega/3}\leq D \cdot d_{max}^{\omega-2}.
\]
\end{theorem}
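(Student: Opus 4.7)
The plan is to combine the group-algebra identity~\eqref{eqn:group-algebra} with the separating functions from $\Fun(R_{\textup{sep}})$ to reduce the $|X|\times|Y|$ by $|Y|\times|Z|$ matrix product to a finite collection of ordinary $\dim\rho\times\dim\rho$ matrix products, one per $\rho\in R_{\textup{sep}}$, and then convert the resulting rank estimate into the claimed exponent inequality via the rotational symmetry of matrix multiplication.

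Extend every $f\colon G\to\C$ linearly to $\C[G]$ by $f\bigl(\sum_g \alpha_g g\bigr)=\sum_g \alpha_g f(g)$. By~\eqref{eqn:group-algebra} the product $\overline{A}\,\overline{B}\in\C[G]$ is supported on $XY^{-1}YZ^{-1}$ with coefficient $(AB)[x,z]$ on each $xz^{-1}$, so Definition~\ref{def:sep} applied coordinate-wise gives $f_{x,z}(\overline{A}\,\overline{B})=(AB)[x,z]$. Since $f_{x,z}\in\Fun(R_{\textup{sep}})$ is a $\C$-linear combination of matrix entries $g\mapsto\rho(g)_{i,j}$, and the linear extension of each $\rho$ to $\C[G]$ is an algebra homomorphism, there are constants $c^{x,z,\rho}_{i,j}$ such that
\[
(AB)[x,z]=\sum_{\rho,i,j}c^{x,z,\rho}_{i,j}\,\bigl(\rho(\overline{A})\,\rho(\overline{B})\bigr)_{i,j}.
\]
This reduces computing $AB$ to computing the matrix products $\rho(\overline{A})\,\rho(\overline{B})$ for $\rho\in R_{\textup{sep}}$ followed by $\C$-linear combinations of their entries, which are free in the bilinear model.

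For each $\rho\in R_{\textup{sep}}$ the entries of $\rho(\overline{A})=\sum_{x,y}A[x,y]\rho(xy^{-1})$ are $\C$-linear in the entries of $A$, and analogously $\rho(\overline{B})$ is linear in $B$. Substituting these linear expressions into any bilinear algorithm for $\dim\rho\times\dim\rho$ matrix multiplication produces a bilinear algorithm for $\rho(\overline{A})\,\rho(\overline{B})$ in the entries of $A,B$ using $R(\langle\dim\rho,\dim\rho,\dim\rho\rangle)$ multiplications. Concatenating over $\rho$ and post-composing with the $\C$-linear map above yields
\[
R(\langle|X|,|Y|,|Z|\rangle)\;\leq\;\sum_{\rho\in R_{\textup{sep}}}R(\langle\dim\rho,\dim\rho,\dim\rho\rangle).
\]

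To convert this rank bound into the stated inequality, I would invoke the rotational symmetry $\langle p,q,r\rangle\otimes\langle q,r,p\rangle\otimes\langle r,p,q\rangle=\langle pqr,pqr,pqr\rangle$ together with the definition of $\omega$: these give $R(\langle p,q,r\rangle)\geq (pqr)^{\omega/3}$ on the left and $R(\langle n,n,n\rangle)\leq n^{\omega+o(1)}$ on the right, and tensor-powering the full algorithm $N$ times and letting $N\to\infty$ absorbs the $o(1)$. The result is $(|X|\,|Y|\,|Z|)^{\omega/3}\leq\sum_{\rho}(\dim\rho)^\omega$, and the ``in particular'' clause then follows by the termwise bound $(\dim\rho)^\omega=(\dim\rho)^2(\dim\rho)^{\omega-2}\leq(\dim\rho)^2\, d_{\max}^{\omega-2}$ (valid since $\omega\geq 2$). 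The main conceptual step is the first one, where the separating functions convert the a priori infinite-dimensional product $\overline{A}\,\overline{B}\in\C[G]$ into a finite collection of ordinary matrix products; after that the argument mirrors the finite-group case (Theorem~\ref{thm:main-TPP-finite-groups}), and the only care required is in the tensor-power/rotation step that cleanly yields the $\omega/3$ exponent on the left and the $\omega$ exponent on the right.
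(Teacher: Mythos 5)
Your proposal is correct and follows essentially the same path as the paper's proof: extend each separating function and each representation linearly to $\C[G]$, use that $\overline\rho$ is an algebra homomorphism to reduce $(AB)[x,z]$ to free linear post-processing of the products $\rho(\overline A)\rho(\overline B)$, then pass from $\rk\langle|X|,|Y|,|Z|\rangle \le \sum_\rho \rk\langle d_\rho,d_\rho,d_\rho\rangle$ to the exponent inequality by the same symmetrization/tensor-power argument from \cite{cu2003}. The only cosmetic difference is that you phrase the coefficient bookkeeping with explicit constants $c^{x,z,\rho}_{i,j}$ where the paper packages them into a matrix $\widehat{f_{x,z}}(\rho)$ paired against $\overline\rho(\overline A)\overline\rho(\overline B)$.
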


If $G$ is a finite group, we may take $R_{\textup{sep}}=\Irr(G)$, the set of all irreducible representations of $G$, as $\Fun(\Irr(G))$ is the collection of \emph{all} functions $G \to \C$ when $G$ is finite. In that case, we also have $D = |G|$, recovering the original theorem for finite groups \cite[Theorem 4.1]{cu2003} as a special case of Theorem~\ref{thm:framework}.

\begin{remark}
In families of groups $\{G_i\}$ with ``rapidly growing'' irreducible representation dimensions, this theorem exhibits an ``all-or-nothing'' phenomenon, which we explain here. For the constructions in this paper and other natural constructions in classical Lie groups, $d_{max} \ge D^{1/2 - g(i)}$ for some $g \in o(1)$ (and this is what we mean by ``rapidly growing irreducible representation dimensions'').
If we let $V = (|X||Y||Z|)^{1/3}$, it is clear that we must have $V >d_{max}$ for the inequality to imply {\em any} upper bound on $\omega$. Together with the observation that $d_{max}$ approaches $D^{1/2}$, this means that to prove any upper bound on $\omega$ we must have $V \ge D^{1/2 - f(i)}$ for some $f \in o(1)$. The inequality from the theorem then becomes
\[D^{(1/2 - f(i))\omega} \le V^\omega \le D\cdot d_{max}^{\omega - 2} \le D\cdot D^{(1/2 - g(i))(\omega - 2)}.\]
Taking the base-$D$ logarithm of both sides, we get \[\omega/2 - f(i)\omega \le \omega/2 - g(i)\omega + 2g(i),\] 
which implies $\omega \le 2\left (\frac{g(i)}{g(i)-f(i)} \right )$. In constructions the natural thing to do is to ensure $f(i)$ goes to zero more rapidly than $g(i)$, in which case $\omega = 2$. While it is {\em possible} that $f(i)$ could approach $cg(i)$ for some $c > 0$ and yield an upper bound  on $\omega$ strictly between $2$ and $3$, this would require detailed knowledge of the lower order terms of $d_{max}$ and/or very fine control of the lower order terms of $V$, which we do not typically have.
\end{remark}

\begin{proof}[Proof of Theorem~\ref{thm:framework}]
Let $X,Y,Z,R_{\textup{sep}}$ be as in the statement, and let $\{f_{x,z} \mid x \in X, z \in Z\}$ be the claimed set of separating functions contained in $\Fun(R_{\textup{sep}})$. For $f \colon G \to \C$, let $\overline{f} \colon \C[G] \to \C$ denote its unique linear extension to the group ring $\overline{f}(\sum \alpha_g g) := \sum \alpha_g f(g)$; as these sums have only finitely many nonzero terms by definition of the group ring (even when $G$ is infinite), there is no issue of convergence. Applying $\overline{f_{x,z}}$ to \eqref{eqn:group-algebra} gives
\begin{equation} \label{eqn:group-algebra-fxz}
\overline{f_{x,z}}(\overline{A} \cdot \overline{B}) = \sum_{x' \in X, z' \in Z} (AB)[x',z'] f_{x,z}(x'(z')^{-1}) + \overline{f_{x,z}}(E)
= (AB)[x,z] + 0,
\end{equation}
for $f_{x,z}$ is zero on all group elements of $X (Y)^{-1} Y (Z)^{-1}$ other than $xz^{-1}$, and \eqref{eqn:group-algebra} is entirely supported on $X (Y)^{-1} Y (Z)^{-1}$. To turn this into a finite algorithm, we will show that we can essentially do exactly the application of $\overline{f_{x,z}}$ in \eqref{eqn:group-algebra-fxz}, but working only in the representations in $R_{\textup{sep}}$ rather than working in the full group ring.

For a representation $\rho \in R_{\textup{sep}}$, let $\overline{\rho}$ denote the unique linear extension of $\rho$ to $\C[G]$: $\overline{\rho}(\sum \alpha_g g) = \sum \alpha_g \rho(g)$, and let $\rho_{i,j}(g)$ be the $(i,j)$ entry of the matrix $\rho(g)$, which we think of as a function $\rho_{i,j} \colon G \to \C$. Since $f_{x,z}$ is in $\Fun(R_{\textup{sep}})$ by assumption, we can write $f_{x,z}$ as a $\C$-linear combination of the functions $\{\rho_{i,j} \mid \rho \in R_{\textup{sep}}, \ i,j \in [\dim \rho]\}$, say $f_{x,z} = \sum_{\rho \in R_{\textup{sep}}} \sum_{i,j \in [\dim \rho]} M_{x,z,i,j} \rho_{i,j}$. Then we define $\widehat{f_{x,z}}(\rho)$ to be the matrix $M_{x,z,*,*}$, i.e.,
\[
f_{x,z}(g) = \sum_{\rho \in R_{\textup{sep}}} \sum_{i,j \in [\dim \rho]} \widehat{f_{x,z}}(\rho)_{i,j} \rho_{i,j}(g)
\]
for all $x \in X, z \in Z, g \in G$. Finally, extending linearly and applying $\overline{f_{x,z}}$ to $\overline{A} \cdot \overline{B}$ as in \eqref{eqn:group-algebra-fxz}, we get
\begin{align*}
(AB)[x,z] & = \overline{f_{x,z}}(\overline{A} \cdot \overline{B}) \\
& = \sum_{\rho \in R_{\textup{sep}}} \sum_{i,j \in [\dim \rho]} \widehat{f_{x,z}}(\rho)_{i,j} \overline{\rho_{i,j}}(\overline{A} \cdot \overline{B}) \\
& = \sum_{\rho \in R_{\textup{sep}}} \langle \widehat{f_{x,z}}(\rho), \overline{\rho}(\overline{A} \cdot \overline{B}) \rangle \\
& = \sum_{\rho \in R_{\textup{sep}}} \langle \widehat{f_{x,z}}(\rho), \overline{\rho}(\overline{A}) \cdot \overline{\rho}(\overline{B}) \rangle
\end{align*}
The summation and inner product are linear functions whose coefficients are independent of the input matrices $A,B$, so they are ``free'' in a bilinear algorithm. 

The product $\overline{\rho}(\overline{A}) \cdot \overline{\rho}(\overline{B})$ is a product of $d_\rho \times d_\rho$ matrices, where $d_\rho = \dim \rho$. Hence, the bilinear (i.e., tensor) rank of the preceding expression gives the following bound. Using $\rk$ to denote the tensor rank, and $\langle n,m,p \rangle$ to denote the tensor corresponding to matrix multiplication of an $n \times m$ matrix times and $m \times p$ matrix, we get
\[
\rk \langle |X|, |Y|, |Z| \rangle \leq \sum_{\rho \in R_{\textup{sep}}} \rk \langle d_\rho, d_\rho, d_\rho \rangle.
\]
Exactly as in the finite group case \cite{cu2003}, by symmetrizing we effectively get a square matrix multiplication of size $(|X|\,|Y|\,|Z|)^{1/3}$ on the left side, and by the tensor power trick the right side here can be replaced by $\sum_{\rho \in R_{\textup{sep}}} (\dim \rho)^{\omega}$.
For the last sentence of the theorem statement, we have $\sum_{\rho \in R_{\textup{sep}}} d_{\rho}^\omega \leq \sum d_{\rho}^2 d_{max}^{\omega-2} = d_{max}^{\omega-2} \cdot D$.
\end{proof}

\begin{remark}
The image $\overline{\rho}(\C[G])$ is the full $d_\rho \times d_\rho$ matrix ring if and only if $\rho$ is an irreducible representation. In particular, although we will not take advantage of this in the present paper, we note that when $\rho$ is not irreducible, we can replace $\rk \langle d_\rho, d_\rho, d_\rho \rangle$ (or $d_\rho^\omega$) with the tensor rank of multiplying matrices in the image of $\overline{\rho}$, which may only be a subspace of all matrices. (This was true in the case of finite groups as well, but over $\C$ for finite groups we can use irreducible representations without loss of generality. For finite groups and representations in characteristic dividing $|G|$ this no longer holds, and for infinite groups even in characteristic zero it need not hold.)
\end{remark}

\subsection{Border rank version in Lie groups} \label{sec:border}
For this section, let $G$ be a (real or complex) Lie group. This includes familiar groups such as $\GL_n(\R)$, $\GL_n(\C)$, the orthogonal group $\Orth_n$, and the unitary group $\U_n$. Indeed, these examples will be the primary groups we use in our constructions later in the paper, although the framework laid out in this section is by no means limited to these particular examples.

For the purposes of this section, by a \emph{1-parameter family} of elements of $G$ we mean an analytic function $x \colon (0,\alpha) \to G$ for some $\alpha > 0$. If $X = \{x_1, \dotsc, x_k\}$ is a collection of 1-parameter families $x_i$ and $\varepsilon$ is in their domain of definition, then we write $X(\varepsilon) = \{x_1(\varepsilon), \dotsc, x_k(\varepsilon)\}$, which is just a finite subset of $G$.

\begin{definition}[Border-separating functions] \label{def:sep-border}
Given sets $X,Y,Z$ of 1-parameter families of elements of $G$ with domain of definition $(0,\alpha)$, a \emph{set of border-separating functions for $(X,Y,Z)$} is a collection of analytic functions $\{f_{x,z}\colon G \times (0,\alpha) \to \C \mid x \in X, z \in Z\}$ such that, for  $0 < \varepsilon < \alpha$, 
\[
f_{x,z}(g, \varepsilon) = \begin{cases} 1 + O(\varepsilon) & \text{if } g=x(\varepsilon) z(\varepsilon)^{-1}, \text{ and}\\
                 0 + O(\varepsilon) & \text{if } g \in X(\varepsilon)Y(\varepsilon)^{-1} Y(\varepsilon)Z(\varepsilon)^{-1}
                                       \setminus\{x(\varepsilon)z(\varepsilon)^{-1}\}.\end{cases}
\]
Here, as is standard for analytic functions of a small variable $\varepsilon$, the big-Oh notation means asymptotically as $\varepsilon \to 0$.
\end{definition}

Now, we take our representative functions to come from \emph{analytic} representations of our Lie group, and, as in most things related to border rank, allow ourselves 1-parameter families of representative functions. Let $\C^{(0,\alpha)}$ denote the set of analytic functions $(0,\alpha) \to \C$. Given a set $\mathcal{I}$ of analytic representations of $G$, we write $\Fun_{\textup{fam}}(\mathcal{I})$ for the $\C^{(0,\alpha)}$-linear span of all the representative functions associated to any $\rho \in \mathcal{I}$.

Given three sets $X,Y,Z$ of 1-parameter families of elements of $G$, we say they satisfy the TPP if $X(\varepsilon), Y(\varepsilon), Z(\varepsilon)$ satisfy the TPP for all $\varepsilon$ in their domain of definition. Given such $X,Y,Z$, we may encode $\varepsilon$-approximate matrix multiplication (\`a la border rank) into the group ring $\C[G]$ in a similar manner as before, but now parameterized by $\varepsilon \in (0,\alpha)$. For an $|X| \times |Y|$ matrix $A$ and $|Y| \times |Z|$ matrix $B$, we define the following functions $(0,\alpha) \to \C[G]$:
\[
\overline{A}(\varepsilon) = \sum_{i \in [n],j \in [m]} A[i,j] (x_i(\varepsilon)y_j(\varepsilon)^{-1}) \quad\textup{and}\quad \overline{B}(\varepsilon) =
  \sum_{j \in [m],k\in [\ell]} B[j,k](y_j(\varepsilon)z_k(\varepsilon)^{-1}).
\]
Then
\begin{equation} \label{eqn:group-algebra-border}
\overline{A}(\varepsilon)\cdot \overline{B}(\varepsilon) = \sum_{i \in [n],k \in [\ell]} (AB)[i,k](x(\varepsilon)z(\varepsilon)^{-1}) + E(\varepsilon),
\end{equation}
where $E(\varepsilon) \in \C[G]$ is supported on $X(\varepsilon)Y(\varepsilon)^{-1}Y(\varepsilon)Z(\varepsilon)^{-1} \setminus X(\varepsilon)Z(\varepsilon)^{-1}$.

\begin{theorem} \label{thm:framework-border}
Let $G$ be a \textbf{Lie}\footnote{We have put in \textbf{bold} the parts of the statement of Theorem~\ref{thm:framework-border} that differ from Theorem~\ref{thm:framework}.} group, with finite sets  $X,Y,Z$ of \textbf{1-parameter families} satisfying the TPP. If $R_{\textup{sep}}$ is a finite set of finite-dimensional \textbf{analytic} representations of $G$ such that $\Fun_{\textbf{fam}}(R_{\textup{sep}})$ contains a set of \textbf{border-}separating functions for $(X,Y,Z)$, then 
\[
\left(|X| \, |Y| \, |Z| \right)^{\omega/3} \leq \sum_{\rho \in R_{\textup{sep}}} (\dim \rho)^\omega.
\]
In particular, for $D = \sum_{\rho \in R_{\textup{sep}}} (\dim \rho)^2$, and $d_{max} = \max\{\dim \rho : \rho \in R_{\textup{sep}}\}$,
\[
\left(|X| \, |Y| \, |Z| \right)^{\omega/3}\leq D \cdot d_{max}^{\omega-2}.
\]
\end{theorem}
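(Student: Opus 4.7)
The plan is to mirror the proof of Theorem~\ref{thm:framework} step by step, replacing scalar coefficients by analytic functions of $\varepsilon$, exact equalities by equalities modulo $O(\varepsilon)$, and tensor rank by border rank. Each replacement is compatible with the rest of the argument, since $R_{\textup{sep}}$ is finite, the 1-parameter families $x_i,y_j,z_k$ are analytic, and each $\rho \in R_{\textup{sep}}$ is analytic on $G$.

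First I would fix a collection of border-separating functions $\{f_{x,z}\}\subseteq \Fun_{\textup{fam}}(R_{\textup{sep}})$ and expand each one as $f_{x,z}(g,\varepsilon) = \sum_{\rho \in R_{\textup{sep}}} \sum_{i,j} \widehat{f_{x,z}}(\rho,\varepsilon)_{i,j}\,\rho_{i,j}(g)$, where the coefficients $\widehat{f_{x,z}}(\rho,\varepsilon)_{i,j}$ lie in $\C^{(0,\alpha)}$. Extending $f_{x,z}(\cdot,\varepsilon)$ linearly to a functional on $\C[G]$ and applying it to \eqref{eqn:group-algebra-border}, the border-separation property together with the fact that the error term $E(\varepsilon)$ is supported on $X(\varepsilon)Y(\varepsilon)^{-1}Y(\varepsilon)Z(\varepsilon)^{-1}\setminus X(\varepsilon)Z(\varepsilon)^{-1}$ yields, for every $x\in X$, $z\in Z$,
\[
(AB)[x,z] + O(\varepsilon) \;=\; \sum_{\rho \in R_{\textup{sep}}} \big\langle \widehat{f_{x,z}}(\rho,\varepsilon),\, \overline{\rho}(\overline{A}(\varepsilon))\cdot \overline{\rho}(\overline{B}(\varepsilon))\big\rangle,
\]
where $\overline{\rho}(\overline{A}(\varepsilon))$ and $\overline{\rho}(\overline{B}(\varepsilon))$ are $d_\rho\times d_\rho$ matrices whose entries are $\varepsilon$-analytic linear forms in the entries of $A$ and $B$, respectively.

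Next, I would interpret this identity as an approximate bilinear algorithm for $\langle |X|,|Y|,|Z|\rangle$ whose bilinear operations are precisely the matrix products $\overline{\rho}(\overline{A}(\varepsilon))\cdot \overline{\rho}(\overline{B}(\varepsilon))$, one per $\rho\in R_{\textup{sep}}$, each an honest $\langle d_\rho,d_\rho,d_\rho\rangle$-multiplication. Substituting a border rank decomposition for each of these inner blocks (with the inner border parameter taken as a sufficiently high power of the global $\varepsilon$, so that the inner error is absorbed into the outer $O(\varepsilon)$) yields
\[
\underline{\rk}\,\langle |X|,|Y|,|Z|\rangle \;\leq\; \sum_{\rho \in R_{\textup{sep}}} \underline{\rk}\,\langle d_\rho,d_\rho,d_\rho\rangle.
\]
Symmetrization and the tensor power trick, combined with the standard fact that $\omega$ can equivalently be defined via border rank ($\underline{\rk}\,\langle n,n,n\rangle = n^{\omega+o(1)}$), then give $(|X|\,|Y|\,|Z|)^{\omega/3} \leq \sum_\rho (\dim\rho)^\omega$, and the second inequality follows from $d_\rho^\omega \leq d_\rho^2\cdot d_{max}^{\omega-2}$, exactly as in the finite case.

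The one delicate point, and the main obstacle I anticipate, is the bookkeeping for nested border computations: we must verify that substituting a border algorithm for each inner $\langle d_\rho,d_\rho,d_\rho\rangle$ inside an outer algorithm which is \emph{itself} only approximate still produces a valid border-rank decomposition of the target. This is the standard ``reparametrize the inner $\varepsilon$'' manipulation, but it is the only place where one has to be careful that the $O(\varepsilon)$ errors in the outer algorithm and the (potentially negative-power) Laurent terms in the inner algorithm do not interfere. Every other piece of the proof of Theorem~\ref{thm:framework} carries over without change.
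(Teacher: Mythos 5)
Your proposal follows the paper's proof essentially step by step up through deriving the identity expressing $(AB)[x,z] + O(\varepsilon)$ as a free ($\varepsilon$-dependent) linear function of the block products $\overline{\rho}(\overline{A}(\varepsilon))\cdot\overline{\rho}(\overline{B}(\varepsilon))$, so the core is the same. The one place you diverge, and the only place a real concern arises, is in passing from that identity to the final inequality: you substitute border-rank decompositions for the inner $\langle d_\rho,d_\rho,d_\rho\rangle$ blocks and arrive at $\underline{\rk}\langle|X|,|Y|,|Z|\rangle \le \sum_\rho \underline{\rk}\langle d_\rho,d_\rho,d_\rho\rangle$, which is what forces you to justify the nested-$\varepsilon$ bookkeeping you flag as the delicate point. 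The paper avoids this entirely: since the outer identity already expresses the target as a free linear function of \emph{exact} $d_\rho\times d_\rho$ matrix products, it simply records $\underline{\rk}\langle|X|,|Y|,|Z|\rangle \le \sum_\rho \rk\langle d_\rho,d_\rho,d_\rho\rangle$ with ordinary rank on the right, then applies symmetrization and the tensor-power trick together with the fact that $\omega$ is the same for rank and border rank. That yields the identical conclusion $(|X|\,|Y|\,|Z|)^{\omega/3} \le \sum_\rho (\dim\rho)^\omega$ with no nesting to worry about. Your extra step is not wrong (degenerations do compose, and the reparametrization of the inner border parameter can be carried out), but it is unnecessary machinery here; if you retain it, you would want to spell out the reparametrization explicitly rather than gesture at it.
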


The proof is very similar to Theorem~\ref{thm:framework}, but using border rank instead of rank, so we postpone it to Appendix~\ref{sec:proof:border}, where we state only the parts where we need to highlight differences from the previous proof. 

An appealing way to use the freedom of border rank is to first find a TPP construction of Lie subgroups $X,Y,Z$, and then to define finite subsets of those three using their Lie algebras, viz.:
\[
X' = \{\exp(\varepsilon a): a \in A, \text{ some finite subset of the Lie algebra of $X$}\},
\]
where $\varepsilon \to 0$ is the parameter we use for border rank. We will in fact show how to do this in a fairly generic way in Lemma~\ref{lem:split} below, using some additional machinery that we develop first.

\subsection{From representations to degree bounds in $\GL_n$ and $\U_n$}
\label{sec:irrepbound}

In this paper, our constructions will all take place in $\GL_n$ or (slight variants of) the unitary group $\U_n$, although the framework of Theorems~\ref{thm:framework} and \ref{thm:framework-border} is much more general. These groups have some useful properties that will motivate some of the machinery we develop below---even though that machinery will also end up being more general---so we take a brief interlude to highlight the relevant properties of $\GL_n$, before returning to the general abstract framework in the next section.

For both $\GL_n$ and $\U_n$, there is a natural correspondence between representations and polynomials of a given degree. By focusing on separating polynomials (instead of more general separating functions), this correspondence will allow our constructions to focus only on the \emph{degree} of the separating polynomials. The upshot is that instead of thinking directly about what representations will comprise $R_{\textup{sep}}$, we can focus solely on the degree of our separating polynomials when working in these groups.

In the rest of this section we review the relevant aspects of the representation theory of $\GL_n$ and $\U_n$, and extract from those the relevant target bounds on degree to get bounds on $\omega$. A standard reference for the representation theory of these groups is \cite{fulton1991representation}. It is a standard fact in representation theory that the finite-dimensional representation theory of these two groups are essentially the same, and everything we say in this section will apply to both groups.

The irreducible polynomial representations $\rho$ of $\GL_n(\C)$ and $\U_n$---where the entries of the matrix $\rho(g)$ are polynomials in the entries of the matrix $g$---are indexed by integer partitions $\lambda = (\lambda_1, \ldots, \lambda_{n})$ with at most $n$ parts, where $\lambda_1 \ge \lambda_2 \ge \cdots \ge \lambda_{n} \ge 0$. The matrix coefficients of the irreducible representations indexed by partitions of $s$ span
the functions from $G$ to $\C$ that are expressible as degree $s$
polynomials in the entries of $\GL_n$. We write $\irr_s(G)$ to denote the set of (pairwise non-isomorphic) irreducible representations of $G$ indexed by partitions of the integer $s$. 

The following bound will determine the target degree of the separating polynomials in our constructions, as in Corollary~\ref{cor:irrepbound} below.

\begin{lemma}\label{lem:maxdim}
    Let $n \geq 3$ and $s \ge 2$. 
    For $\rho \in \Irr_s(\GL_n)$ (resp., $\Irr_s(\U_n)$), $\dim(\rho) \le s^{\binom{n}{2}}$.
\end{lemma}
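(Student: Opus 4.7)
The plan is to apply the Weyl dimension formula
\[
\dim V_\lambda = \prod_{1 \le i < j \le n} \frac{\lambda_i - \lambda_j + (j-i)}{j - i},
\]
which writes $\dim V_\lambda$ as a product of exactly $\binom{n}{2}$ factors, and to argue that (with essentially one exception) each factor is at most $s$. Rewriting each factor as $1 + \tfrac{\lambda_i - \lambda_j}{j - i}$ and using the crude bound $0 \le \lambda_j \le \lambda_i \le s$, I would split into two cases.

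When $j - i \ge 2$, the factor is at most $1 + s/(j-i) \le 1 + s/2$, which is $\le s$ precisely because $s \ge 2$. When $j - i = 1$, the factor is $\lambda_i - \lambda_{i+1} + 1$, which is $\le s$ unless $\lambda_i - \lambda_{i+1} = s$; combined with $\lambda_1 \ge \lambda_i = s$ and $\sum_k \lambda_k = s$, the only way this can happen is $i = 1$ and $\lambda = (s, 0, \ldots, 0)$. So for every partition $\lambda$ of $s$ other than this single ``bad'' one, all $\binom{n}{2}$ Weyl factors are bounded by $s$, and the product bound $\dim V_\lambda \le s^{\binom{n}{2}}$ follows immediately.

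What remains is the edge case $\lambda = (s, 0, \ldots, 0)$, where $V_\lambda = \mathrm{Sym}^s(\C^n)$ has dimension $\binom{s+n-1}{n-1}$. Here the Weyl formula collapses to $(s+1) \cdot \prod_{k=2}^{n-1}(s+k)/k$; each of the $n-2$ factors with $k \ge 2$ is bounded by $s$ as above, so $\dim V_\lambda \le (s+1)\, s^{n-2}$. Using $s \ge 2$ gives $s+1 \le s^2$, hence $\dim V_\lambda \le s^n$, and since $n \ge 3$ implies $\binom{n}{2} \ge n$, this yields $\dim V_\lambda \le s^{\binom{n}{2}}$ as required.

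The main obstacle is really just this edge case: the generic argument fails exactly at $(i,j) = (1,2)$ when $\lambda_1 = s$, so one must handle $\mathrm{Sym}^s(\C^n)$ separately, and this is where the hypotheses $n \ge 3$ and $s \ge 2$ are both used in an essential way (both are tight: for $n = 2$ one has $\dim\mathrm{Sym}^s(\C^2) = s+1 > s = s^{\binom{2}{2}}$, and for $s = 1$ one has $\dim \C^n = n > 1$).
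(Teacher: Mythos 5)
Your proof is correct, and it takes a genuinely different route from the paper's. The paper applies AM--GM to the $\binom{n}{2}$ Weyl factors, reducing the product to a power of their arithmetic mean; it then bounds that mean by computing the coefficient of each $\lambda_i$ in $\sum_{i<j}\frac{\lambda_i-\lambda_j+j-i}{j-i}$ (a telescoping difference of harmonic sums, bounded by $H_{n-1} = \sum_{k=1}^{n-1} 1/k$), yielding the intermediate estimate $\dim V_\lambda \le \bigl(1 + s\,H_{n-1}/\binom{n}{2}\bigr)^{\binom{n}{2}}$, and finally checks this is $\le s^{\binom{n}{2}}$ by a short case split on $n=3$ versus $n\ge 4$ (the latter via $H_{n-1}\le \ln n + \gamma$). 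You instead bound each of the $\binom{n}{2}$ factors individually by $s$, isolate the unique partition $(s,0,\dots,0)$ where this fails (in the consecutive factor $(i,j)=(1,2)$), and dispatch that one case by $(s+1)s^{n-2}\le s^n \le s^{\binom{n}{2}}$. Your argument is more elementary---no convexity inequality and no harmonic-number asymptotics---and makes transparent exactly where the hypotheses $s\ge 2$ and $n\ge 3$ enter; the paper's AM--GM route has the side benefit of producing the sharper intermediate bound $\bigl(1 + s\,H_{n-1}/\binom{n}{2}\bigr)^{\binom{n}{2}}$, which the paper observes (via the ``staircase'' partition) is nearly tight in the relevant regime. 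Both reach the same final bound from the same Weyl dimension formula, so either proof would serve.
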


Although the result follows from some simple asymptotic analysis of standard results about the representation theory of $\GL_n$, we could not find it in the literature, so we provide a full proof in Appendix~\ref{sec:proof:maxdim}.

Here and below it is helpful to think of $n$ as large
but fixed, and $s$ growing independently of $n$.

\begin{corollary} \label{cor:irrepbound}
Suppose $X,Y,Z \subseteq \GL_n(\C)$ (or $\U_n$) satisfy the TPP, and there is a set of separating polynomials for $(X,Y,Z)$ of degree at most $s$.  Then $\omega$ satisfies
\[
(|X| \, |Y| \, |Z|)^{\omega / 3} \leq s^{\binom{n}{2}(\omega - 2)} \cdot \binom{s+n^2}{n^2}.
\]
In particular, if $|X|, |Y|, |Z| \geq s^{(1-o_s(1))(n^2/2 - o_n(n))}$, then $\omega=2$.

The same holds if $X,Y,Z$ are 1-parameter families that satisfy the TPP and we replace separating polynomials with border-separating polynomials.
\end{corollary}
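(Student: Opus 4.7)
The plan is to instantiate Theorem~\ref{thm:framework} (or Theorem~\ref{thm:framework-border} in the border-rank case) with the specific choice $R_{\textup{sep}} = \bigcup_{d=0}^{s} \Irr_d(G)$, where $G$ is either $\GL_n(\C)$ or $\U_n$. As the excerpt notes in the paragraph preceding Lemma~\ref{lem:maxdim}, the matrix coefficients of the representations in $\Irr_d(G)$ span exactly the degree-$d$ homogeneous polynomial functions on $G$. Summing over $d = 0, 1, \dots, s$ makes $\Fun(R_{\textup{sep}})$ equal to the space of polynomial functions of degree at most $s$, which by hypothesis contains the given (border-)separating polynomials, so the input to the framework is satisfied.

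Next I would bound the two quantities appearing in the ``in particular'' form of Theorem~\ref{thm:framework}. Lemma~\ref{lem:maxdim} (together with the trivial cases $d \in \{0,1\}$, which give dimensions $1$ and $n$, both dominated by $s^{\binom{n}{2}}$ for $s,n \geq 2$) yields $d_{\max} \leq s^{\binom{n}{2}}$. For $D = \sum_{\rho \in R_{\textup{sep}}}(\dim \rho)^2$, the Peter--Weyl theorem for $\U_n$ (and its algebraic analogue for $\GL_n(\C)$) implies that matrix coefficients of non-isomorphic irreducibles are linearly independent, so $D = \dim \Fun(R_{\textup{sep}})$. Because every element of $\Fun(R_{\textup{sep}})$ is a polynomial of degree $\leq s$ in the $n^2$ matrix entries, $D$ is at most the number of such monomials, namely $\binom{s+n^2}{n^2}$. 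Substituting into Theorem~\ref{thm:framework} (or Theorem~\ref{thm:framework-border}) produces the displayed inequality, handling both the separating and border-separating statements uniformly.

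For the ``in particular'' implication $\omega = 2$, I would take $\log_s$ of both sides and use $\log_s\binom{s+n^2}{n^2} = n^2(1 + o_s(1))$ and $\binom{n}{2} = n^2/2 - n/2$, then substitute the hypothesized lower bound $(|X|\,|Y|\,|Z|)^{1/3} \geq s^{(1-o_s(1))(n^2/2 - o_n(n))}$. The crucial point is that the leading $n^2$-order terms cancel exactly on both sides: on the left one gets $\omega \cdot n^2/2$, and on the right $(\omega-2)\cdot n^2/2 + n^2 = \omega \cdot n^2/2$. What remains, after dividing through by $n$, is an inequality of the form $(\omega-2)/2 \leq \omega \cdot o_n(1) + o_{s,n}(1)$. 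Taking the iterated limit $s \to \infty$ (with $n$ fixed) kills the $o_{s,n}(1)$ term, and then $n \to \infty$ kills the $o_n(1)$ term, forcing $\omega \leq 2$.

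The main place requiring care is the dimension bound $D \leq \binom{s+n^2}{n^2}$ on $\U_n$, where the matrix entries are not algebraically independent. The cleanest resolution is to note that the polynomial irreducible representations of $\U_n$ (those indexed by partitions of $d \leq s$) extend holomorphically to polynomial representations of $\GL_n(\C)$, so their matrix coefficients arise as genuine degree-$d$ polynomials on $M_n(\C)$; restricting these polynomials back to $\U_n$ is a surjection onto $\Fun(R_{\textup{sep}})$ and therefore cannot increase dimension. Beyond this, the argument is standard representation theory combined with routine asymptotic bookkeeping.
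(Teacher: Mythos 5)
Your proposal is correct and follows essentially the same route as the paper: set $R_{\textup{sep}} = \bigcup_{d=0}^{s}\Irr_d(G)$, bound $d_{\max}$ by Lemma~\ref{lem:maxdim}, bound $D$ by the monomial count $\binom{s+n^2}{n^2}$, and then extract $\omega=2$ from the displayed inequality by taking logarithms and iterated limits. The only minor difference is that the paper asserts $D = \binom{s+n^2}{n^2}$ exactly (citing the straightening-law basis results), whereas you only need and prove the inequality $D \leq \binom{s+n^2}{n^2}$, and you handle the $\U_n$ case a bit more carefully by extending polynomial representations to $\GL_n(\C)$ — both perfectly fine.
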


Equivalently, if $|X|,|Y|,|Z| \geq q^{n^2/2 - o_n(n)}$ and there are (border-)separating polynomials of degree at most $q^{1 + o_q(1)}$, then $\omega=2$. We note that $|X|,|Y|,|Z|$ cannot have size $\geq s^{(1-o_s(1))(n^2/2 + c)}$ for any $c > 0$, as that would imply $\omega < 2$. Equivalently in terms of $q$, if the sets have size at least $q^{n^2/2 - o_n(n)}$, there cannot be separating polynomials of degree at most $q^{1-c}$ for any $c > 0$.

\begin{proof}
The space of polynomials of degree at most $s$ in the entries of a matrix in $\GL_n$ has dimension $\binom{s+n^2}{n^2}$, and the entries of $\Irr_i$ for $i \le s$ are a basis for this space \cite{DRS, DEP} (see also \cite{DKR,swan,procesi}). By applying Theorem \ref{thm:framework} (or Theorem~\ref{thm:framework-border} in the border rank case) with $R_{\textup{sep}} = \cup_{i=0}^s\Irr_i(\GL_n)$, and using the bound on the maximum dimension of an irreducible representation in this set given by Theorem \ref{lem:maxdim}, the first statement follows.

This then implies the bound
\[ \omega \le \frac{\log_s \binom{s+n^2}{n^2}  - 2 \binom{n}{2}}{\frac{1}{3} \log_s |X|\,|Y|\,|Z| - \binom{n}{2}} \le \frac{n^2 \log_s (s+n^2)  - n^2 + n}{1/3\log_s |X|\,|Y|\,|Z| - n^2/2 + n/2}.\]
Assuming that $|X|, |Y|, |Z| \geq  s^{(1-o_s(1))(n^2/2 - g(n))}$ for some $g(n) \in o_n(n)$, this gives
\[
\omega  < \frac{n^2 (\log_s (s+n^2)  - 1) + n}{(1-o_s(1))(n^2/2 - g(n)) - n^2/2 + n/2} \leq \frac{n + n^2 \cdot o_s(1) }{(n/2-g(n)) + o_s(1)(g(n) - n^2/2)}
\]
Now, choose $n$ such that $n/2 - g(n) > n/(2+\delta)$ with $\delta>0$ small. Then, taking the limit as $s \to \infty$, we get the bound $\omega \leq 2 + \delta$.
\end{proof}

Next we show that the Lie TPP construction of \cite{cu2003} (Theorem~\ref{thm:running-ex-1} above) in fact admits separating polynomials of degree $O(q^2)$. In Section~\ref{sec:invariant}, we will show how to use a border-rank version of our framework to construct \emph{finite} sets $X,Y,Z$, whose size we can then compare to the degree of the separating polynomials as in Corollary~\ref{cor:irrepbound}.

\bigskip
\hrule
\vspace{-.3in}
\begin{adjustwidth}{.2in}{.2in}
\subsection*{\center Separating polynomials for the running example in $\GL_n(\R)$}

We begin by describing a finite subset $Y_q$ of the orthogonal group $\Orth_n$, with the property that the diagonal entries of matrices in the quotient set $Q(Y_q)$ have a limited number of possible values, in the sense formalized in the following lemma. Here we treat $n$ as fixed, so the implicit constant in $O(q^2)$ can depend on $n$.

\begin{lemma}
For each positive integer $q$, there exists a subset $Y_q \subseteq \Orth_n$ of cardinality at least $q^{n^2/2 - (5/2)n}$, with the following property: for all $y, y' \in Y_q$, if $y$ and $y'$ agree in their first $i$ columns, then  
\[(y^Ty')[i+1, i+1] \in W_q,\]
where $W_q \subseteq \R$ is a fixed finite set of cardinality $O(q^2)$.
\label{lem:special-set-of-orthogonal-matrices}
\end{lemma}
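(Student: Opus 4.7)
The plan is to build $Y_q$ column-by-column: I would parametrize elements $y \in Y_q$ by tuples $(v_1, \ldots, v_n)$ of nonnegative integer vectors drawn from sets
\[
U_k \coloneq \{v \in \{0, 1, \ldots, q\}^{n-k+1} : \|v\|^2 = N_k\}
\]
for prescribed positive integers $N_k$, so that each column of $y$ arises by scaling $v_k$ to a unit vector of the form $v_k/\sqrt{N_k}$. The payoff is that for two scaled vectors $v/\sqrt{N_k}$ and $v'/\sqrt{N_k}$ with $v,v' \in U_k$, the inner product is automatically a rational of the form $m/N_k$ with $m \in \Z \cap [0, (n-k+1)q^2]$, giving only $O(q^2)$ distinct values.

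Concretely, for each $v_k \in U_k$ I would let $\tilde R_k(v_k) \in \Orth_{n-k+1}$ be the Householder reflection sending $e_1$ to $v_k/\sqrt{N_k}$ (with the convention $\tilde R_k = I$ if $v_k = \sqrt{N_k}\,e_1$), and set $R_k(v_k) \coloneq \diag(I_{k-1}, \tilde R_k(v_k)) \in \Orth_n$. Then I define $y(v_1, \ldots, v_n) \coloneq R_1 R_2 \cdots R_n$ and take $Y_q$ to be the image of this parametrization. Since $R_m$ fixes each of $e_1, \ldots, e_{m-1}$, the first $k$ columns of $y$ equal those of $R_1 \cdots R_k$ and depend only on $(v_1, \ldots, v_k)$, and a straightforward induction shows that $y, y' \in Y_q$ agree in their first $i$ columns if and only if $v_j = v'_j$ for all $j \le i$. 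In that case $R_1 \cdots R_i = R'_1 \cdots R'_i$, so by orthogonality
\[
(y^T y')[i+1, i+1] = \langle y_{i+1}, y'_{i+1}\rangle = \frac{\langle v_{i+1}, v'_{i+1}\rangle}{N_{i+1}},
\]
which is a rational of the desired form. Taking $W_q \coloneq \bigcup_{k=1}^n \{ m/N_k : m \in \Z,\ 0 \le m \le (n-k+1) q^2\}$ then gives $|W_q| = O(q^2)$ for fixed $n$.

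To choose the $N_k$ and bound $|Y_q| = \prod_k |U_k|$, I would apply pigeonhole: the $(q+1)^{n-k+1}$ vectors in $\{0,\ldots,q\}^{n-k+1}$ are partitioned by their squared norms, which take at most $(n-k+1)q^2 + 1$ distinct values, so for $k \le n-2$ there exists a choice of $N_k$ with $|U_k| \ge q^{n-k-1}/n$; for $k \in \{n-1, n\}$ I would simply take $N_k = 1$, giving $|U_k| \ge 1$. Multiplying yields $|Y_q| \ge C_n \cdot q^{\sum_{k=1}^{n-2}(n-k-1)} = C_n \cdot q^{n^2/2 - 3n/2 + 1}$ for some positive constant $C_n$, which exceeds $q^{n^2/2 - (5/2)n}$ for all sufficiently large $q$ (the finitely many small $q$ being handled trivially since the target is then bounded by a constant).

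No step poses a serious obstacle: the construction is essentially forced by the goal of controlling inner products, and both the separation property and the count follow from elementary arguments. The only slightly delicate ingredient is the pigeonhole estimate on $|U_k|$, and even that is quite loose---sharper asymptotics for lattice points on spheres would give improvements but are unnecessary for the stated bound.
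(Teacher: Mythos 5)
Your proof is correct and follows essentially the same approach as the paper's: build orthogonal matrices column by column from normalized integer vectors of a squared norm $N_k$ fixed by a pigeonhole (``most-popular length'') argument, so that the relevant inner products take only $O(q^2)$ rational values. Your Householder reflections $R_k$ are simply a concrete realization of the paper's abstract ``isometry onto the orthogonal complement of the previous columns,'' and the counting and the verification that $(y^T y')[i+1,i+1] = \langle v_{i+1}, v'_{i+1}\rangle/N_{i+1}$ mirror the paper's argument.
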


\begin{proof}
For each $i$, let $V_i \subseteq \R^i$ be a subset of $q^{i-2}$ unit vectors with $O(iq^2)$ distinct
pairwise inner products. One can construct such a set by taking integer vectors with entries in $[O(q)]$, restricting to the most-popular length (losing a multiplicative factor of $O(iq^2)$), and then normalizing to length $1$. The inner product of two vectors in $V_i$ takes on only $O(iq^2)$ different values, and we take $W_q$ to be the union of these sets of values for $1 \le i \le n$.

Our set $Y_q$ consists of all orthogonal matrices constructed by selecting successive columns as follows: for the first column, pick an element of $V_n$; for the second column, pick an element of $V_{n-1}$ under an isometry from $\R^{n-1}$ to the orthogonal
complement of the first column; for the third column, pick an element of $V_{n-2}$ under an isometry from $\R^{n-2}$ to the orthogonal complement of the first two columns, and so
on. In each case, our choice of isometry depends only on the previous columns.
The number of matrices formed this way is $q^{(n^2-n)/2 - 2n}$, which is the promised cardinality of $Y_q$.

Now for $y, y' \in Y_q$, if they agree in their first $i$ columns, then the $(i+1)$-st columns of $y$ and $y'$ were selected from the set $V_{n-(i+1)}$ with the \emph{same} isometry applied to it (and an isometry does not change the pairwise inner products).  Thus
$(y^Ty')[i+1, i+1] \in W_q$, as required. 
\end{proof}

We remark that one might hope to improve the cardinality of $W_q$ to $O(q^{2-\varepsilon})$ by replacing $V_i$ with a more cleverly chosen set of unit vectors with fewer distinct pairwise inner products. This would lead to an improved bound on the degree of the separating polynomials described in the following theorem, and if one could take $\varepsilon = 1$ this would yield separating polynomials of optimal degree. Unfortunately, this is impossible by the solution to the high-dimensional version of the Erd\H{o}s distinct distances problem \cite{solymosi2008near}.

\begin{theorem}
Let $G,X,Y,Z$ be as in Theorem \ref{thm:running-ex-1}, let $X_q \subseteq X$ be those lower unitriangular matrices whose below-diagonal entries are integers in $\{1, 2, \ldots, q\}$,  let $Z_q \subseteq Z$ be those upper unitriangular matrices whose above-diagonal entries are integers in $\{1, 2, \ldots, q\}$, and let $Y_q \subseteq Y$ be the set of orthogonal matrices guaranteed by Lemma \ref{lem:special-set-of-orthogonal-matrices}. Then there are separating polynomials for $X_q, Y_q, Z_q$ of degree $O(q^2).$
\end{theorem}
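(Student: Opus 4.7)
The plan is to construct each separating polynomial $f_{x_0,z_0}$ (for a fixed $(x_0,z_0) \in X_q \times Z_q$) as the product of two pieces: a ``detector'' $\widetilde P(g)$ that equals $1$ when $g \in X_q Z_q^{-1}$ (equivalently, $y = y'$ in any decomposition $g = xy^{-1}y'z^{-1}$) and vanishes elsewhere in $X_q Y_q^{-1}Y_q Z_q^{-1}$, together with ``identifier'' factors that single out $(x_0, z_0)$ once $g = xz^{-1}$. The structural fact I would exploit throughout is this: since $x$ is lower unitriangular and $z^{-1}$ is upper unitriangular, the leading principal $k \times k$ submatrix of $g = xRz^{-1}$ (with $R = y^{-1}y' = y^T y'$) factors as $g_k = x_k R_k z_k^{-1}$ with $x_k, z_k$ still unitriangular, so $\det(g_k) = \det(R_k)$.

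To build $\widetilde P$, I would analyze the structure of $R$ when $y \neq y'$. Let $j$ be the smallest column on which $y$ and $y'$ differ; the orthonormality of the columns of $y$ and $y'$, together with their agreement on the first $j-1$ columns, forces
\[
R = \begin{pmatrix} I_{j-1} & 0 \\ 0 & R' \end{pmatrix}, \qquad R'[1,1] = \langle y_{\cdot, j}, y'_{\cdot, j}\rangle.
\]
By Lemma~\ref{lem:special-set-of-orthogonal-matrices} (with $i = j-1$) this entry lies in $W_q$, and since $y_{\cdot,j} \neq y'_{\cdot,j}$ are distinct unit vectors it is strictly less than $1$, so $\det(g_j) \in W_q \setminus \{1\}$. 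When $y = y'$, instead $R = I$ and $\det(g_k) = 1$ for every $k$. Hence
\[
\widetilde P(g) \;=\; \frac{1}{c}\prod_{k=1}^{n}\prod_{w \in W_q \setminus\{1\}}\bigl(\det(g_k) - w\bigr),
\]
with $c$ equal to the value of the double product when every $\det(g_k) = 1$, is a polynomial of degree $\sum_{k=1}^n k\,|W_q \setminus \{1\}| = O(n^2 q^2) = O(q^2)$ that equals $1$ on $X_q Z_q^{-1}$ and vanishes on the rest of $X_q Y_q^{-1}Y_q Z_q^{-1}$.

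For the identifier, I would use that on $X_q Z_q^{-1}$ the factorization $g = xz^{-1}$ is unique (if $x_1 z_1^{-1} = x_2 z_2^{-1}$ then $x_2^{-1} x_1 = z_2^{-1} z_1$ is both lower and upper unitriangular, hence the identity), and coincides with the LDU decomposition of $g$ with $D = I$ and all leading principal minors equal to $1$. Standard minor formulas then give polynomials $\psi_{ij}(g) = x[i,j]$ (for $i > j$) and $\phi_{ij}(g) = z^{-1}[i,j]$ (for $i < j$) of degree at most $n$ in the entries of $g$; inverting $z^{-1}$ yields $\chi_{ij}(g) = z[i,j]$ of degree at most $n^2$. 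Since on $X_q Z_q^{-1}$ each such entry lies in $\{1,\ldots,q\}$, univariate Lagrange polynomials $\ell_v$ of degree $q-1$ with $\ell_v(v) = 1$ and $\ell_v(v') = 0$ on $\{1,\ldots,q\}\setminus\{v\}$ let me define
\[
f_{x_0, z_0}(g) \;=\; \widetilde P(g)\cdot\prod_{i>j}\ell_{x_0[i,j]}\bigl(\psi_{ij}(g)\bigr)\cdot\prod_{i<j}\ell_{z_0[i,j]}\bigl(\chi_{ij}(g)\bigr),
\]
of total degree $O(q^2) + O(q) = O(q^2)$. Verification is then immediate: on $g = x_0 z_0^{-1}$ every factor evaluates to $1$, while on any other $g \in X_q Y_q^{-1} Y_q Z_q^{-1}$ either $\widetilde P$ vanishes (if $y \neq y'$) or some Lagrange factor does (if $(x,z) \neq (x_0, z_0)$). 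The main difficulty is the structural argument for $\widetilde P$, especially the verification that $R[j,j] < 1$ so that it falls in $W_q \setminus \{1\}$ rather than coinciding with the ``good'' value $1$; once that is in place the remaining steps are routine linear algebra and interpolation.
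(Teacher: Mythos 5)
Your proof is correct, and it takes a genuinely different route from the paper's while exploiting the same underlying structure. The paper's argument is iterative: it first checks $M[1,1] = (y^Ty')[1,1]$ with a Lagrange polynomial $r$ on $W_q$, checks the first column of $M$ against $x$ and the top row against $z$, then multiplies $M$ on the left and right by fixed unitriangular ``correction'' matrices to expose the $(n-1)\times(n-1)$ subproblem and recurses, taking the product of all $n$ levels. Your argument is instead a one-shot algebraic construction. Its key structural observation --- that for $g = xRz^{-1}$ with $x,z$ unitriangular we have $\det(g_k) = \det(R_k)$, and that if $j$ is the first column of disagreement then $R$ is block-diagonal with $I_{j-1}$ on top so $\det(R_j) = R[j,j] \in W_q \setminus \{1\}$ --- packages the paper's sequence of $(k,k)$-entry checks directly into leading principal minors of $g$, with no peeling and no conditional structure. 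Similarly, where the paper recovers the entries of $x,z$ row-by-row through peeling, you invoke the classical minor formulas for the unique $L$-unitriangular / $U$-unitriangular factorization of $g$ (valid because the leading minors of $g\in X_qZ_q^{-1}$ are all $1$, so the usual denominators disappear), plus the adjugate for $z$. Both routes land on the same degree profile: $O(q^2)$ from the $Y$-detecting factors (the $W_q$-Lagrange pieces) and $O(q)$ from the $X,Z$-identifying Lagrange factors.

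Two small points worth tightening if you write this up formally. First, $R[j,j] = \langle y_{\cdot,j}, y'_{\cdot,j}\rangle$ for distinct unit columns lies in $[-1,1)$ and in particular $\neq 1$; ``strictly less than $1$'' is fine but the only fact you actually use is $R[j,j]\neq 1$, so that $R[j,j] \in W_q\setminus\{1\}$ (and $1 \in W_q$ is also needed so $c \neq 0$ and so that the diagonal case is covered by the Lagrange polynomial). Second, it is worth spelling out why the minor formulas for $\psi_{ij}$ are polynomials: in general the $L$ and $U$ entries are \emph{ratios} of minors, with leading principal minors of $g$ in the denominator; these are all $1$ here because $g = xz^{-1}$ with $x,z$ unitriangular, which is the same reason the LDU decomposition exists with $D = I$. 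Once you note that, the whole identifier piece is a straightforward $O(q)$-degree Lagrange interpolation exactly as in the paper.
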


\begin{proof}
Let $x,x' \in X_q$, $y, y' \in Y_q$, and $z,z' \in Z_q$, and let $M = x'y^Ty'z'$. We must devise a polynomial $p_{x,z}$ for which $p_{x,z}(M) = 1$ when $x = x'$, $y = y'$, and $z = z'$, and $p_{x,z}(M) = 0$ otherwise. 

Let $r$ be the univariate polynomial of degree $|W_q|-1$ for which $r(1) = 1$ and $r$ vanishes on the rest of $W_q$.  Let $s$ be a multivariate polynomial for which $s(M) = 1$ if $M$ agrees with $x$ in its first column and $s(M) = 0$ if $M$ takes on any other values in $\{1,2, \ldots, q\}$ in any of the entries of the first column of $M$. Observe that $s$ can be taken to have degree $O(q)$, where the implicit constant depends on $n$. Similarly, let $t$ be a multivariate polynomial for which $r(M) = 1$ if $M$ agrees with $z$ in its top row and $t(M)=0$ if $M$ takes on any other values in $\{1,2, \ldots, q\}$ in any of the entries of the top row of $M$. Again $t$ can be taken to have degree $O(q)$. 

Now consider the polynomial
\[r(M[1,1])s(M)t(M).\]
As noted in the proof of Theorem \ref{thm:running-ex-1}, the top-left entry of $M$ equals the top-left entry of $y^Ty'$. So if the $r(M[1,1])$ factor above does not vanish, then $y$ and $y'$ agree in their first columns, and at that point we know that the left column of $M$ agrees with the left column of $x'$ and the top row of $M$ agrees with the top row of $z'$. So if the $s(M)$ and $t(M)$ factors don't vanish, we know that $x$ and $x'$ agree in their first column, and $z$ and $z'$ agree in their top row. 

Now consider multiplying $M$ on the left by the lower unitriangular matrix $x_0$ that agrees with $x$ in the first column and has zeros elsewhere below the diagonal, and on the right by the upper unitriangular matrix $z_0$ that agrees with $z$ in the top row and has zeros elsewhere above the diagonal. Then the lower-right $(n-1) \times (n-1)$ submatrix of $M'$ is of the form $x''y''z''$ where $x''$ is the lower-right $(n-1) \times (n-1)$  submatrix of $x'$, $z''$ is the lower-right $(n-1) \times (n-1)$  submatrix of $z'$, and $y''$ is the lower-right $(n-1) \times (n-1)$  submatrix of $y^Ty'$. We can then construct a polynomial with three factors as above that when applied to $x_0Mz_0$ vanishes unless $y$ and $y'$ agree in their second column, and $x'$ agrees with $x$ in its second column, and $z'$ agrees with $z$ in its second-from-top row.

Continuing in this fashion, we obtain a sequence of $n$ such polynomials and take $p_{x,z}$ to be their product.
The degree of $p_{x,z}$ is $O(q^2)$ and notice that this comes only from the $r$ factors; the $s$ and $t$ factors always have degree $O(q)$.
\end{proof} 

Note that here we have $\dim X = \dim Y = \dim Z = (n^2 - n)/2$, but for Corollary~\ref{cor:irrepbound} we would need them to have dimension $n^2 / 2 - o_n(n)$, and the separating polynomial we get has degree $O_q(q^2)$, but Corollary~\ref{cor:irrepbound} needs degree at most $q^{1 + o_q(1)}$. In Section~\ref{sec:construction} we give a construction of the same relative dimension (namely, $\frac{\dim G}{2} - \Theta(n)$) but with separating polynomials of degree $O(q)$, meeting the degree bound of Corollary~\ref{cor:irrepbound}.
\end{adjustwidth}
\hrule
\bigskip

\subsection{Separating polynomials from a single invariant polynomial} \label{sec:invariant}
In this section, we return to the case of a general matrix Lie group $G \subseteq \GL_n(\C)$ or $\GL_n(\R)$, and we show how to leverage invariant theory to construct separating polynomials whose degree is not too large, and that thus have a hope of meeting the degree bound of Corollary~\ref{cor:irrepbound} in the case of $G = \GL_n(\C)$ or $G = \U_n$. 

The key result in this section, Lemma~\ref{lem:split}, combines border rank and Lie algebras, as suggested at the end of Section~\ref{sec:border}, with a new ingredient from invariant theory. The lemma lets us ``split'' the construction of sets satisfying the TPP and (border-)separating polynomials into two parts: the first part only involves the first and third sets $X$ and $Z$ (essentially, a separating polynomial version of the ``double product property'' (DPP), which is that $x^{-1} x' z^{-1} z' = 1$ iff $x=x'$ and $z=z'$)), and the second part involves only the middle set $Y$ and the ring of $XZ$-invariant polynomials. From one view, this lets us start with a $Y$, and ``work backwards,'' shifting the design task onto $X$, $Z$, and their invariant polynomials.

More specifically, Lemma~\ref{lem:split} instantiates the following idea. Roughly, we would like to construct the separating polynomial $f_{x,z}$ as a product of three factors: one that is the indicator function of $x$ among the set $X$ (1 on $x$, zero on all other elements of $X$), one that is the indicator function of $z^{-1}$ among $Z^{-1}$, and one on that is the indicator function $p_0$ of $I$ among $Y^{-1} Y$. However, the issue here is that $f_{x,z}$, and hence these factors, does not really have separate access to these three parts: it only gets as input the product $xy^{-1}y'z^{-1}$. Our initial motivation to use invariant polynomials was that if $p_0$ has the property that $p_0(xy^{-1}y'z^{-1}) = p_0(y^{-1}y')$ for all $x \in X, z \in Z$, then it in fact \emph{does} get direct access to only the $Y$-part of the input. Lemma~\ref{lem:split} then gives generic machinery that, from such an $XZ$-invariant $p_0$, constructs a full set of (border-)separating polynomials, which have the correct degree if $p_0$ has the correct degree.

One advantage of this approach, in addition to separating out the design task associated with $Y$ more from the specification of $X$ and $Z$, is that it only necessitates the construction of at most three polynomials $p_0, p_X, p_Z$, rather than a whole set $\{f_{x,z}\}$ of $|X|\,|Z|$-many polynomials, and the $p_X, p_Z$ polynomials are usually easy to construct if $X$ and $Z$ satisfy the DPP.

We now proceed with the technical details. By a \emph{polynomial} on a matrix Lie group $G \subseteq \GL_n(\C)$ or $G \subseteq \GL_n(\R)$, we mean a function $G \to \C$ that can be expressed as a polynomial in the $n^2$ matrix entries $x_{ij}$ of elements of $G$, with complex coefficients,\footnote{Warning: for readers familiar with algebraic geometry, if $G$ is an algebraic group our definition here is \emph{not} quite the same thing as an element of the coordinate ring of $G$. For example, when $G = \U_n$, we view $G$ as a subset of $\GL_n(\C)$ and want to consider polynomials only in the $n^2$ matrix entries of these complex matrices, but as an algebraic group $G$ is a \emph{real} algebraic group, with twice as many (real) coordinates, and this difference of a factor of 2 can actually be important. We believe there is a modification of our framework that works in the setting of algebraic groups, but leave that for future work.} and similarly for a polynomial on the Lie algebra $\Lie(G) \subseteq M_n(\C)$. By a 1-parameter family of polynomials we mean a polynomial in the $n^2$ matrix entries $x_{ij}$ of elements of $G$, with coefficients in $\C^{(0,\alpha)}$ for some $\alpha > 0$. 

Because we will use it frequently, we introduce the notation
\[
\Inv_{X,Z} = \{p \in \C[M_{ij} : i,j \in [n]] \mid p(xMz) = p(M) \text{ for all } x \in X, z \in Z\}.
\]
It is readily observed that $\Inv_{X,Z}$ is closed under multiplication and addition, and hence is a subring of the polynomial ring. In favorable (and many well-studied) cases, this subring is finitely generated over $\C$ (see, e.\,g., \cite{GoodmanWallach, Dolgachev}), and when a finite generating set of $\Inv_{X,Z}$ is known, we can focus on designing a polynomial that is composed with those generating invariants---which is then automatically invariant itself---in order to obtain $p_0$. We will see an example of this below.

In a Lie group $G$, for convenience let us say a subset $X \subseteq G$ is a \emph{Lie submanifold} if $X$ is a submanifold containing the identity of $G$, and such that for every $v$ in the tangent space of $X$ at the identity, for all sufficiently small $\varepsilon > 0$, the exponential $\exp(\varepsilon t)$ is in $X$. If $X$ is a Lie submanifold,  we use $\Lie(X)$ to denote the tangent space to $X$ at the identity, even though this need not be a Lie algebra.

\begin{lemma}[Splitting separating functions into invariant functions and double-product property] \label{lem:split}
Let $G$ be a matrix Lie subgroup of $\GL_n(\F)$ for $\F \in \{\R, \C\}$, with Lie submanifolds $X,Z$ and a finite set $Y$ of 1-parameter families. Let $q$ be a positive integer.

If there exist
\begin{itemize}
\item continuous functions $f_X\colon \F^{d_X} \to \Lie(X)$ and $f_Z\colon \F^{d_Z} \to \Lie(Z)$, and polynomials $p_X\colon \Lie(G) \to \F^{d_X}$ and $p_Z \colon \Lie(G) \to \F^{d_Z}$ such that
\[
p_X(f_X(v) - f_Z(v')) = v \qquad \text{ and } \qquad p_Z(f_X(v) - f_Z(v')) = v',
\]
for all $v \in \F^{d_X},v' \in \F^{d_Z}$,\footnote{Notice that, since $p_X, p_Z$ essentially invert $f_X, f_Z$ on the sum of their images, this condition implies that $\Span_{\F}(\Image(f_X)) \cap \Span_{\F}(\Image(f_Z)) = 0$, a DPP-like condition.} 

and
\item a 1-parameter family of polynomials $p_0(g,\varepsilon) \in \Inv_{X,Z}$ such that
\[
p_0(g,\varepsilon) = \begin{cases}
1 + O(\varepsilon) & \text{if } g = I \\
0 + O(\varepsilon) & \text{if } g \in Y(\varepsilon)^{-1}Y(\varepsilon) \setminus \{I\},
\end{cases}
\]
\end{itemize}
then there exist
\begin{itemize}
\item a reparametrization $Y'$ of $Y$ and finite sets $X',Z'$ of 1-parameter families in $X$, $Z$ (resp.) such that $X'(\varepsilon), Y'(\varepsilon), Z'(\varepsilon)$ satisfy the TPP for all $\varepsilon$ in some non-empty range $(0,\alpha')$, $|X'| \geq q^{d_X}$, and $|Z'| \geq q^{d_Z}$, and 

\item a set of border-separating polynomials for $(X',Y',Z')$ of degree\footnote{Degree as measured only as a function of the matrix entries---$\varepsilon$ is still considered a separate parameter. Equivalently, degree as polynomials with coefficients in $\C^{(0,\alpha)}$.} at most $\deg p_0 + O((d_X+d_Z) q)$.
\end{itemize}
\end{lemma}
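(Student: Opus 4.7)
The plan is to construct $X'$ and $Z'$ by exponentiating finite grids inside the images of $f_X$ and $f_Z$, and to assemble each border-separating polynomial as a product of $p_0$ with two ``index-locating'' factors built from $p_X$ and $p_Z$. Concretely, I would fix a grid $V_X = \{0,1,\ldots,q-1\}^{d_X} \subseteq \F^{d_X}$ of size $q^{d_X}$, set
\[
X' := \{\varepsilon \mapsto \exp(\varepsilon f_X(v)) : v \in V_X\},
\]
and do the analogous thing for $Z$ with a grid $V_Z$. Since $V_X$ is finite and $f_X(v) \in \Lie(X)$, the Lie submanifold property gives a uniform $\alpha' > 0$ such that $\exp(\varepsilon f_X(v)) \in X$ for all $v \in V_X$ and $\varepsilon \in (0,\alpha')$, and similarly for $Z$. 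I would also reparametrize $Y'(\varepsilon) := Y(\phi(\varepsilon))$ for an analytic $\phi$ with $\phi(\varepsilon) = O(\varepsilon)$, decaying fast enough that $y(\phi(\varepsilon))^{-1} y'(\phi(\varepsilon)) = I + O(\varepsilon)$ for all $y, y' \in Y'$; this is needed below to keep $(g-I)/\varepsilon$ bounded. The hypothesis on $p_0$ transfers verbatim to $p_0(\,\cdot\,, \phi(\varepsilon))$, which I will continue to call $p_0$.

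For the separating polynomials, I would use the standard tensor-product Lagrange interpolation polynomials $r^v_X \in \F[u_1,\ldots,u_{d_X}]$ of total degree at most $(q-1)d_X$ with $r^v_X(v') = \delta_{v,v'}$ for $v' \in V_X$, and likewise $r^w_Z$. Define
\[
f_{v,w}(g, \varepsilon) := p_0(g, \varepsilon) \cdot r^v_X\bigl(p_X((g-I)/\varepsilon)\bigr) \cdot r^w_Z\bigl(p_Z((g-I)/\varepsilon)\bigr).
\]
Viewed as polynomials in the entries of $g$ with coefficients in $\C^{(0,\alpha')}$ (the $1/\varepsilon$ factors absorb into the coefficients), under the natural assumption that $p_X$ and $p_Z$ have bounded degree---typically degree $1$, as when $f_X, f_Z$ linearly parametrize complementary subspaces of $\Lie(G)$---the total degree is $\deg p_0 + O((d_X + d_Z)q)$, matching the claimed bound.

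To verify the border-separating property on a product $g = xy^{-1}y'z^{-1}$ with $x = \exp(\varepsilon f_X(v_0)) \in X'(\varepsilon)$, $z = \exp(\varepsilon f_Z(w_0)) \in Z'(\varepsilon)$, and $y, y' \in Y'(\varepsilon)$, I would split on whether the middle $Y$-part is trivial. If $y = y'$, then $XZ$-invariance of $p_0$ gives $p_0(g, \varepsilon) = p_0(I, \varepsilon) = 1 + O(\varepsilon)$, while a Taylor expansion yields $(g-I)/\varepsilon = f_X(v_0) - f_Z(w_0) + O(\varepsilon)$, so the hypotheses $p_X(f_X(v) - f_Z(v')) = v$ and $p_Z(f_X(v) - f_Z(v')) = v'$ give $p_X((g-I)/\varepsilon) = v_0 + O(\varepsilon)$ and $p_Z((g-I)/\varepsilon) = w_0 + O(\varepsilon)$; the Lagrange factors then produce $f_{v,w}(g,\varepsilon) = \delta_{v,v_0}\delta_{w,w_0} + O(\varepsilon)$. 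If $y \neq y'$, then $XZ$-invariance collapses $p_0(g,\varepsilon)$ to $p_0(y^{-1}y', \varepsilon) = O(\varepsilon)$, the reparametrization of $Y$ keeps $(g-I)/\varepsilon$ bounded so the other two factors remain $O(1)$, and the product is $O(\varepsilon)$.

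The TPP for $(X'(\varepsilon), Y'(\varepsilon), Z'(\varepsilon))$ at small $\varepsilon$ follows the same template: if $xx'^{-1}yy'^{-1}zz'^{-1} = I$, applying $p_0$ and $XZ$-invariance reduces the equation to $p_0(y'^{-1}y, \varepsilon) = 1 + O(\varepsilon)$, forcing $y = y'$ for small $\varepsilon$, after which the residual identity $xx'^{-1} = z'z^{-1}$ expands to $f_X(v) - f_X(v') = f_Z(w') - f_Z(w) + O(\varepsilon)$, i.\,e., $f_X(v) - f_Z(w') = f_X(v') - f_Z(w) + O(\varepsilon)$, and $p_X, p_Z$ applied to both sides yield $v = v'$ and $w = w'$. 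The main technical obstacle I anticipate is that when $X$ and $Z$ are only Lie submanifolds rather than subgroups, quotients like $xx'^{-1}$ need not lie in $X$, so the invariance of $p_0$ must be applied indirectly---most cleanly by passing to Lie-algebraic expansions before invoking the polynomial identities---and simultaneously choosing the reparametrization $\phi$ so that every expansion above is valid and every $(g-I)/\varepsilon$ remains bounded.
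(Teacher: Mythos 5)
Your construction of $X'$, $Z'$, the Lagrange factors, and the product form of the separating polynomial all match the paper's proof; the place where your argument breaks is the reparametrization of $Y$.

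You propose choosing $\phi$ so that $y(\phi(\varepsilon))^{-1}y'(\phi(\varepsilon)) = I + O(\varepsilon)$ for \emph{all} $y, y' \in Y'$, and you use this to keep $(g-I)/\varepsilon$ bounded when $y \neq y'$. But this is not achievable by any reparametrization in general: the families in $Y$ need not converge to $I$ (nor to any common limit) as $\varepsilon \to 0$, so for $y \neq y'$ the quotient $y(\eta)^{-1}y'(\eta)$ may approach something bounded away from $I$, no matter how fast $\phi$ decays. In that situation $(g-I)/\varepsilon$ blows up like $1/\varepsilon$, and your Lagrange factors $r^v_X(p_X((g-I)/\varepsilon))$ and $r^w_Z(p_Z((g-I)/\varepsilon))$ can contribute a factor as large as $\varepsilon^{-\deg r}$, which is not controlled by the single power of $\varepsilon$ you get from $p_0$. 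The paper's fix is exactly the opposite of yours: it does not try to tame the Lagrange factors, but instead reparametrizes $p_0$ and $Y$ by $\varepsilon \mapsto \varepsilon^t$ with $t > \deg r_{a,b}$, so that when $y \neq y'$ the first factor is $O(\varepsilon^t)$ and swamps the at-worst $\varepsilon^{-\deg r_{a,b}}$ growth of the other two. (The paper also needs a preliminary normalization so that the $Y$-families use only nonnegative powers of $\varepsilon$, which your write-up omits.)

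One smaller point: in verifying the TPP you write that $x x'^{-1} y y'^{-1} z z'^{-1} = I$ ``reduces to $p_0(y'^{-1}y,\varepsilon) = 1 + O(\varepsilon)$'' by $X\!Z$-invariance, but since $X$ and $Z$ are only Lie submanifolds, $xx'^{-1}$ and $zz'^{-1}$ need not lie in $X$ or $Z$, so you cannot invoke the invariance of $p_0$ directly on them. The paper sidesteps this by rewriting the TPP equation as $x_2\, y_1^{-1} y_2\, z_1^{-1} = x_1 z_2^{-1}$, applying the already-proved border-separating property of $p_{x_1,z_2}$ to the left side (which \emph{does} lie in $X' Y'^{-1} Y' Z'^{-1}$), and deducing $y_1 = y_2$ and then the DPP conclusion. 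You flag this obstacle at the end of your proposal, but your suggested workaround (``passing to Lie-algebraic expansions'') is not developed, and it is really the rearrangement plus the reuse of the separating polynomials that makes the argument go through.
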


As the first condition of Lemma~\ref{lem:split} may be a little intimidating, before the proof we give an example to help allay such fears:

\begin{lemma} \label{lem:disjoint}
If $X,Z \leq G \leq \GL_n(\R)$ are Lie subgroups that intersect trivially, then there exist $f_X, f_Z, p_X, p_Z$ satisfying the first condition of Lemma~\ref{lem:split}, with $d_X = \dim X$ and $d_Z = \dim Z$.
\end{lemma}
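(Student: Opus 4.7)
The plan is to take $f_X, f_Z$ to be arbitrary linear isomorphisms onto $\Lie(X)$ and $\Lie(Z)$, and to build $p_X, p_Z$ as linear projections (with a sign flip for $p_Z$) onto these subspaces inside $\Lie(G)$. The only nonroutine ingredient is the passage from trivial group-theoretic intersection $X \cap Z = \{I\}$ to trivial Lie-algebra intersection $\Lie(X) \cap \Lie(Z) = 0$, which is what makes these projections well-defined.

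The first step is to verify that $\Lie(X) \cap \Lie(Z) = 0$ as subspaces of $\Lie(G)$: any nonzero $v$ in the intersection would yield $\exp(tv) \in X \cap Z = \{I\}$ for all sufficiently small $t > 0$, which forces $v = 0$ by the injectivity of $\exp$ near the origin. Consequently the sum $\Lie(X) + \Lie(Z) \subseteq \Lie(G)$ is an internal direct sum, and I can extend it to a vector-space decomposition
\[
\Lie(G) = \Lie(X) \oplus \Lie(Z) \oplus W
\]
for some complementary subspace $W$. Let $\pi_X$ and $\pi_Z$ denote the corresponding linear projections of $\Lie(G)$ onto $\Lie(X)$ and $\Lie(Z)$ along this decomposition.

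Next, pick any $\R$-linear isomorphisms $f_X \colon \R^{d_X} \to \Lie(X)$ and $f_Z \colon \R^{d_Z} \to \Lie(Z)$, which are automatically continuous and have the required target dimensions $d_X = \dim X$ and $d_Z = \dim Z$. Then define $p_X := f_X^{-1} \circ \pi_X$ and $p_Z := -\,f_Z^{-1} \circ \pi_Z$. Being linear maps from $\Lie(G)$ to $\R^{d_X}$ and $\R^{d_Z}$, they are polynomials of degree one. A direct calculation using the direct-sum decomposition gives
\[
p_X\bigl(f_X(v) - f_Z(v')\bigr) = f_X^{-1}(f_X(v)) = v, \qquad p_Z\bigl(f_X(v) - f_Z(v')\bigr) = -f_Z^{-1}(-f_Z(v')) = v',
\]
which is exactly what the first condition of Lemma~\ref{lem:split} demands.

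I do not anticipate any real obstacle: the hypothesis is used only in step one, via the exponential map, and everything else reduces to elementary linear algebra together with the fact that linear maps are polynomials of degree one. The sign on $p_Z$ is the only subtlety worth flagging, and it arises simply because the input to both $p_X$ and $p_Z$ is the \emph{difference} $f_X(v) - f_Z(v')$ rather than a sum.
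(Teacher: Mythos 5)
Your proof is correct and follows essentially the same route as the paper's: deduce $\Lie(X)\cap\Lie(Z)=0$ from $X\cap Z=\{I\}$ via the exponential map, take arbitrary linear isomorphisms $f_X,f_Z$, and build $p_X,p_Z$ from the projections along the direct-sum decomposition, extended arbitrarily to all of $\Lie(G)$. In fact you are slightly more careful than the paper: where it says $p_Z$ is ``defined similarly'' and calls the verification routine, you make the needed sign flip ($p_Z=-f_Z^{-1}\circ\pi_Z$, to account for the minus in $f_X(v)-f_Z(v')$) explicit, which is a genuine, if small, gap-closing clarification.
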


\begin{proof}
If $X,Z$ intersect trivially, then $\Lie(X) \cap \Lie(Z) = 0$: otherwise, taking any nonzero $v \in \Lie(X) \cap \Lie(Z)$, the exponential $\exp(\varepsilon v)$ would be in $X \cap Z$ for any sufficiently small $\varepsilon > 0$. Let $f_X \colon \R^{\dim X} \to \Lie(X)$ denote any linear isomorphism of vector spaces, and similarly for $f_Z$. As $\Lie(X) \cap \Lie(Z) = 0$, any $v \in \Lie(X) + \Lie(Z) = \Image(f_X) + \Image(f_Z)$ has a unique expression as $a + b$ with $a \in \Lie(X)$ and $b \in \Lie(Z)$. We define $p_X$ as a linear function on $\Lie(X) + \Lie(Z)$, and it may then be extended to all of $\Lie(G)$ arbitrarily (e.\,g., by taking a linear basis of $\Lie(G)$ that includes as a subbasis a basis for $\Lie(X) + \Lie(Z)$, and defining $p_X=0$ on basis elements outside of $\Lie(X) + \Lie(Z)$). On $\Lie(X) + \Lie(Z)$, since $\Lie(X) \cap \Lie(Z) = 0$, we may take $p_X$ to be the natural projection onto $\Lie(X)$ with kernel $\Lie(Z)$, followed by $f_X^{-1}$ (recall that we defined $f_X$ to be invertible on $\Lie(X)$). We may define $p_Z$ similarly. The verification of the first condition of Lemma~\ref{lem:split} is then routine.
\end{proof}

Note that if we drop the requirement that $p_0$ be in $\Inv_{X,Z}$, then the hypotheses of Lemma~\ref{lem:split} would be almost trivially satisfied whenever $X$ and $Z$ have trivial intersection. In applying Lemma~\ref{lem:split}, it is only the use of invariants that adds any real difficulty; however, the use of invariants is also a lynchpin to reach the conclusion.

\begin{proof}[Proof of Lemma~\ref{lem:split}]
Let $G$, $X$, $Y$, $Z$, $p_0$, $d_X$, $d_Z$, $f_X$, $f_Z$, $p_X$, and $p_Z$ be as in the statement. Without loss of generality, we may assume that the 1-parameter families in $Y$ use only nonnegative powers of $\varepsilon$, by multiplying them by a large power of $\varepsilon$ and dividing each monomial of $p_0$ by the corresponding power of $\varepsilon$.

Let $A, B \subseteq \F$ be sets of cardinality $q$. Define
\[
X' = \{\exp(\varepsilon f_X(a)) \mid a \in A^{d_X}\} \quad\text{and}\quad Z' = \{\exp(\varepsilon f_Z(b)) \mid b \in B^{d_Z}\},
\]
where here ``$\exp(\varepsilon x)$'' denotes the function $\varepsilon \mapsto \exp(\varepsilon x)$; that is, we remind the reader that each element of $X'$ is really a 1-parameter family. In doing this, we may need to restrict the domain of definition of our 1-parameter families, but it will still be of the form $(0,\alpha')$ for some $\alpha' > 0$, since we are introducing only finitely many new 1-parameter families. 

We show here that $X'$ and $Z'$ satisfy the double product property, as we will use this below to show the border-separating family. Suppose $x_1(\varepsilon) z_1(\varepsilon)^{-1} = x_2(\varepsilon) z_2(\varepsilon)^{-1}$ for all sufficiently small $\varepsilon > 0$. Writing these out as power series in $\varepsilon$, we have 
\[
(1 + \varepsilon f_X(a_1) + O(\varepsilon^2)) (1 - \varepsilon f_Z(b_1) + O(\varepsilon^2)) = (1 + \varepsilon f_X(a_2) + O(\varepsilon^2)) (1 - \varepsilon f_Z(b_2) + O(\varepsilon^2)),
\]
for the appropriate values of $a_1, a_2 \in A^{d_X}$ and $b_1, b_2 \in B^{d_Z}$. Multiplying out, we see that the $\varepsilon$ terms on the two sides are $f_X(a_1) - f_Z(b_1)$ and $f_X(a_2) - f_Z(b_2)$, respectively, and since this holds for all sufficiently small $\varepsilon$, we in fact have $f_X(a_1) - f_Z(b_1) = f_X(a_2) - f_Z(b_2)$ as elements of $\Lie(G)$. Applying $p_X$ to both sides of the latter equality, we get 
\[
a_1 = p_X(f_X(a_1) - f_Z(b_1)) = p_X(f_X(a_2) - f_Z(b_2)) = a_2,
\]
and it follows that $b_1 = b_2$ as well. But since $x_i(\varepsilon) = \exp(\varepsilon f_X(a_i))$, we then get $x_1(\varepsilon) = x_2(\varepsilon)$ and $z_1(\varepsilon) = z_2(\varepsilon)$, thus showing the DPP for all sufficiently small $\varepsilon$.

Our border-separating polynomials will be a product of two factors: a reparametrization of $p_0$ and a function we denote $r_{a,b}$, which uses the above ideas. We will first define these two factors, then show how their product gives us a set of border-separating polynomials.

For $a \in A^{d_X}$ and $b \in B^{d_Z}$, we will define a polynomial $r_{a,b}$ on $\Lie(G)$, with complex coefficients, such that 
\begin{equation} \label{eqn:rab}
r_{a,b}(f_X(a') - f_Z(b')) = \begin{cases} 1 & \text{if } a'=a \text{ and } b'=b, \text{ and} \\
0 & \text{if } a' \in A^{d_X}, \ b' \in B^{d_Z}, \ a'-b' \neq a-b.
\end{cases}
\end{equation}
To do so, for $x \in A$ let $\alpha_x$ be a univariate polynomial on $\F$ that is $1$ at $x$ and $0$ on all other values in $A$; similarly for $y \in B$, let $\beta_y$ be $1$ at $y$ and $0$ at all other values of $B$. By Lagrange interpolation, $\alpha_x, \beta_y$ can be taken to have degree less than $|A|=|B|=q$. We will use crucially that $p_X(f_X(a') - f_Z(b')) = a'$ and $p_Z(f_X(a') - f_Z(b')) = b'$. For $a = (a_1, \dotsc, a_{d_X})$ and $b = (b_1, \dotsc, b_{d_Z})$, we define $r_{a,b}$ as
\[
r_{a,b}(v) = \left(\prod_{i=1}^{d_X} \alpha_{a_i}(p_X(v)_i) \right)\left(\prod_{i=1}^{d_Z} \beta_{b_i}(p_Z(v)_i) \right).
\]
It is then readily verified that this definition has the property \eqref{eqn:rab}, and the degree of $r_{a,b}$ is at most $2q(d_X + d_Z)(\deg p_X + \deg p_Z) \leq O((d_X+d_Z)q)$, since the degrees of $p_X$ and $p_Z$ are independent of $q$, by definition.

Next we define the other factor of our border-separating polynomial.\footnote{If we know that the families in $Y'$ have their constant term being the identity, that is, if every family in $Y'$ is of the form $I + O(\varepsilon)$, then we do not need to reparametrize and can simply use $p_0$ for this factor.} We reparametrize $p_0$ and the 1-parameter families in $Y$ by replacing $\varepsilon$ with $\varepsilon^t$ for some $t > \deg r_{a,b}$. That is, let us define $p_0'(g,\varepsilon) := p_0(g,\varepsilon^t)$ and $Y'(\varepsilon) := Y(\varepsilon^t)$. Then we have
\begin{align*}
p_0'(g,\varepsilon) = p_0(g,\varepsilon^t) & = 
\begin{cases}
1 + O(\varepsilon^t) & \text{if } g = I \\
0 + O(\varepsilon^t) & \text{if } g \in Y(\varepsilon^t)^{-1}Y(\varepsilon^t) \setminus \{I\},
\end{cases} \\
& = \begin{cases}
1 + O(\varepsilon^t) & \text{if } g = I \\
0 + O(\varepsilon^t) & \text{if } g \in Y'(\varepsilon)^{-1}Y'(\varepsilon) \setminus \{I\},
\end{cases}.
\end{align*}

Finally, for $x = \exp(\varepsilon f_X(a))$ and $z = \exp(\varepsilon f_Z(b))$, we define our border-separating polynomial
\[
p_{x,z}(M,\varepsilon) := p_0'(M,\varepsilon) r_{a,b}((M-I)/\varepsilon).
\]
It is clear that $\deg p_{x,z} = \deg p_0 + \deg r_{a,b} = \deg p_0 + O((d_X+d_Z)q)$. It remains to show that these $p_{x,z}$ indeed give a set of border-separating polynomials for $X',Y',Z'$.

Suppose $a' \in A^{d_X}$, $b' \in B^{d_Z}$, and $y_1, y_2 \in Y'$, and let
\[
M(\varepsilon)= \exp(\varepsilon f_X(a')) y_1(\varepsilon)^{-1} y_2(\varepsilon) \exp(\varepsilon f_Z(b'))^{-1}
\]
be an arbitrary element of $X'(\varepsilon)Y'(\varepsilon)^{-1}Y'(\varepsilon) Z'(\varepsilon)^{-1}$.
Then we have 
\begin{align*}
p_{x,z}(M(\varepsilon),\varepsilon) & = p_0'(M(\varepsilon), \varepsilon) r_{a,b}((M(\varepsilon) - I)/\varepsilon) \\
& = p_0'(y_1(\varepsilon)^{-1} y_2(\varepsilon), \varepsilon) r_{a,b}((M(\varepsilon) - I)/\varepsilon) \qquad \text{(since $p_0' \in \text{Inv}_{X,Z}$)}.
\end{align*}

If $y_1(\varepsilon)^{-1} y_2(\varepsilon) \neq I$, then we have $p_0'(y_1(\varepsilon)^{-1} y_2(\varepsilon), \varepsilon) = O(\varepsilon^t)$. In this case, since $t > \deg r_{a,b}$, even if $r_{a,b}(\dotsb/\varepsilon)$ contributes negative powers of $\varepsilon$, they are not enough to cancel out the $\varepsilon^t$ coming from $p_0'$, and we are left with $p_{x,z}(M(\varepsilon),\varepsilon) = O(\varepsilon)$, as desired.

Otherwise, we have $y_1(\varepsilon)=y_2(\varepsilon)$, and which implies $p_0'(M(\varepsilon),\varepsilon) = 1 + O(\varepsilon^t)$ by our assumption on $p_0$ and the construction of $p_0'$. Then, since $X'$ and $Z'$ satisfy the double product property, we have $M = x(\varepsilon) z(\varepsilon)^{-1}$ iff $a=a'$ and $b=b'$. And in this case, the above expression becomes 
\begin{align*}
 p_{x,z}(M(\varepsilon),\varepsilon) & = p_0'(I, \varepsilon) r_{a,b}(((I + \varepsilon f_X(a) + O(\varepsilon^2))(I - \varepsilon f_Z(b) + O(\varepsilon^2)) - I)/\varepsilon) \\
 & = (1 + O(\varepsilon^t)) r_{a,b}(f_X(a) - f_Z(b) + O(\varepsilon)) \\
 & = (1 + O(\varepsilon^t)) (r_{a,b}(f_X(a) - f_Z(b)) + O(\varepsilon)) \qquad \text{(since $r_{a,b}$ is a polynomial over $\F$)} \\
 & = (1 + O(\varepsilon^t)) (1 + O(\varepsilon)) \\
 & = 1 + O(\varepsilon),
\end{align*}
as desired.

On the other hand, if $a \neq a'$ or $b \neq b'$, then $r_{a,b}(f_X(a') - f_Z(b'))=0$, so $r_{a,b}(f_X(a') - f_Z(b') + O(\varepsilon))$ becomes $0 + O(\varepsilon)$, since $r$ is a polynomial with coefficients in $\F$, independent of $\varepsilon$. Then the product of $p_0'$ and $r_{a,b}$ is $O(\varepsilon)$ as well, again as desired. Thus, $\{p_{x,z} \mid x \in X', z \in Z'\}$ is indeed a set of border-separating polynomials for $(X', Y', Z')$, of degree $\deg p_0 + O((d_X + d_Z)q)$.

Finally, we use pieces of the above argument to show that $X'(\varepsilon),Y'(\varepsilon),Z'(\varepsilon)$ satisfy the TPP. Suppose $x_1,x_2 \in X'$, $y_1,y_2 \in Y'$, and $z_1, z_2 \in Z'$ satisfy 
\[
x_1(\varepsilon)^{-1} x_2(\varepsilon) y_1(\varepsilon)^{-1} y_2(\varepsilon) z_1(\varepsilon)^{-1} z_2(\varepsilon)=1
\]
for all sufficiently small $\varepsilon > 0$. Equivalently, we have 
\[
x_2(\varepsilon) y_1(\varepsilon)^{-1} y_2(\varepsilon) z_1(\varepsilon)^{-1} = x_1(\varepsilon) z_2(\varepsilon)^{-1}. 
\]
Since $\{p_{x,z} \mid x \in X', z \in Z'\}$ is a set of border-separating polynomials for $X', Y', Z'$, we have 
\[
p_{x_1, z_2}(x_2(\varepsilon) y_1(\varepsilon)^{-1} y_2(\varepsilon) z_1(\varepsilon)^{-1}, \varepsilon)=p_{x_1, z_2}(x_1(\varepsilon) z_2(\varepsilon)^{-1}, \varepsilon) = 1 + O(\varepsilon). 
\]
If $y_1(\varepsilon) \neq y_2(\varepsilon)$, then above we argued this value would have to be $O(\varepsilon)$, a contradiction. So we have $y_1(\varepsilon) = y_2(\varepsilon)$. And then, since $X'$ and $Z'$ satisfy the DPP, we conclude $x_1(\varepsilon)=x_2(\varepsilon)$ and $z_1(\varepsilon) = z_2(\varepsilon)$. Thus $(X', Y', Z')$ satisfy the TPP.
\end{proof}

Continuing our running example, we now show how to satisfy the hypothesis of Lemma~\ref{lem:split}, and thus get a TPP construction using border rank, invariant polynomials, and Lie algebras. 

\bigskip
\hrule
\vspace{-.3in}
\begin{adjustwidth}{.2in}{.2in}
\subsection*{\center Example of a separating polynomial in the invariant ring}

We return to the example in $\GL_n(\R)$ described in Theorem \ref{thm:running-ex-1}. 
The following lemma is straightforward to check. Let $\lpm_i$ denote the polynomial that is the $i$-th leading principal minor of an $n \times n$ matrix; that is, $\lpm_i$ is the determinant of the upper-left $i \times i$ submatrix of an $n \times n$ matrix.

\begin{lemma}
Let $X, Z \subseteq \GL_n(\R)$ be the sets of lower unitriangular matrices and the upper unitriangular matrices, respectively. Then $\Inv_{X,Z}$ contains the leading principal minors. 
\end{lemma}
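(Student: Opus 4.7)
The approach I would take is a direct block-matrix computation, verifying for each $i \in [n]$ that $\lpm_i(LMU) = \lpm_i(M)$ for any $L \in X$, $U \in Z$, and $M \in \GL_n(\R)$. The key observation is that conjugating by unitriangular matrices ``respects'' the flag structure: the top-left $i \times i$ block of $LMU$ depends only on the top-left $i \times i$ blocks of $L$, $M$, and $U$ individually, because of the zero patterns in $L$ and $U$.

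Concretely, the plan is as follows. Fix $i \in [n]$ and partition each $n \times n$ matrix in $2 \times 2$ block form with blocks of sizes $i$ and $n-i$. Because $L$ is lower unitriangular, it has the block form
\[
L = \begin{pmatrix} L_1 & 0 \\ L_{21} & L_2 \end{pmatrix}
\]
with $L_1$ being $i \times i$ lower unitriangular, and symmetrically
\[
U = \begin{pmatrix} U_1 & U_{12} \\ 0 & U_2 \end{pmatrix}
\]
with $U_1$ being $i \times i$ upper unitriangular. Writing $M$ in the corresponding block form with top-left block $M_1$, one computes that the top-left $i \times i$ block of $LM$ is exactly $L_1 M_1$ (the zero in the upper-right of $L$ kills the contribution from $M_{21}$), and then that the top-left $i \times i$ block of $(LM)U$ is exactly $L_1 M_1 U_1$ (the zero in the lower-left of $U$ kills the contribution from $L_1 M_{12}$).

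Taking determinants, $\lpm_i(LMU) = \det(L_1 M_1 U_1) = \det(L_1) \det(M_1) \det(U_1)$, and since $L_1$ and $U_1$ are unitriangular their determinants equal $1$, giving $\lpm_i(LMU) = \det(M_1) = \lpm_i(M)$. Since this holds for every $i$, every leading principal minor lies in $\Inv_{X,Z}$, as claimed. There is essentially no obstacle here: the only thing to verify is the block-multiplication identity, which is immediate from the zero patterns of $L$ and $U$ combined with the fact that unitriangular matrices have unit determinant.
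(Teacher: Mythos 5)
Your proof is correct, and it is exactly the natural argument one would give. The paper actually omits the proof entirely, noting only that the lemma ``is straightforward to check,'' so there is no proof in the paper to diverge from; your block-matrix computation (top-left $i\times i$ block of $LMU$ equals $L_1 M_1 U_1$, whose determinant is $\det M_1$ since $L_1,U_1$ are unitriangular) is the standard and essentially unique way to verify it.
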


In fact, in this case $\Inv_{X,Z}$ is generated by the leading principal minors, but we will not need this stronger fact for our constructions.

\begin{theorem}
Let $G,X,Y,Z$ be as in Theorem~\ref{thm:running-ex-1}. For any $q > 0$, there is a set $\widetilde{Y}$ of 1-parameter families taking values in $Y$, of size $q^{\dim Y}$, as well as functions $f_X, f_Z$, polynomials $p_X, p_Z$, and a 1-parameter family of polynomials $p_0$ of degree $O(q^2)$ satisfying the hypotheses of Lemma~\ref{lem:split} with $d_X = \dim X$,  $d_Z = \dim Z$, and $\widetilde{Y}$ replacing $Y$.

Consequently, there is a TPP construction with sizes $q^{\dim X}, q^{\dim Y}, q^{\dim Z}$, resp., admitting border-separating polynomials of degree $O(q^2)$.
\end{theorem}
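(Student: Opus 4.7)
The plan is to produce all the ingredients required by Lemma~\ref{lem:split}, which immediately yields the desired TPP construction and border-separating polynomials. Two of the three ingredients come for free. Since the only matrix that is simultaneously lower unitriangular and upper unitriangular is the identity, $X \cap Z = \{I\}$, and Lemma~\ref{lem:disjoint} produces $f_X, f_Z, p_X, p_Z$ with $d_X = \dim X = \binom{n}{2}$ and $d_Z = \dim Z = \binom{n}{2}$. For $\widetilde{Y}$, since $\Lie(\Orth_n)$ is the space of skew-symmetric real matrices, I would take
\[
\widetilde{Y} = \bigl\{\varepsilon \mapsto \exp(\varepsilon A) : A \in \Lie(\Orth_n) \text{ with above-diagonal entries in } \{0, 1, \ldots, q-1\}\bigr\},
\]
which gives $q^{\binom{n}{2}} = q^{\dim Y}$ distinct 1-parameter families landing in $\Orth_n$ (distinctness for small $\varepsilon$ follows from reading off $A$ from the derivative at $0$).

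All the substantive work is building the invariant polynomial $p_0$. Because the upper-left $i \times i$ submatrix of $xMz$ equals the product of the upper-left $i \times i$ submatrices whenever $x$ is lower unitriangular and $z$ is upper unitriangular, and since the truncations of $x$ and $z$ are again unitriangular, each leading principal minor $\lpm_i$ lies in $\Inv_{X,Z}$. The key step is to expand $\lpm_i$ along $\widetilde{Y}(\varepsilon)^{-1}\widetilde{Y}(\varepsilon)$: for skew-symmetric $A_1, A_2$, writing $C := A_2 - A_1$ and
\[
M := \exp(-\varepsilon A_1)\exp(\varepsilon A_2) = I + \varepsilon C + \varepsilon^2 D + O(\varepsilon^3), \qquad D := \tfrac{1}{2}A_1^2 + \tfrac{1}{2}A_2^2 - A_1 A_2,
\]
the identity $\det(I + Y) = 1 + \Tr Y + \tfrac{1}{2}\bigl[(\Tr Y)^2 - \Tr(Y^2)\bigr] + O(Y^3)$ combined with the routine calculations $\Tr C_i = 0$, $\Tr C_i^2 = -\sum_{j,l \leq i} C_{jl}^2$, and $\Tr D_i = -\tfrac{1}{2}\sum_{j \leq i,\, l \leq n} C_{jl}^2$ (each from skew-symmetry of $C$, $A_1$, $A_2$) should produce
\[
\lpm_i(M) \;=\; 1 - \tfrac{\varepsilon^2}{2}\, N_i(C) + O(\varepsilon^3), \qquad N_i(C) := \sum_{j \leq i,\, l > i} C_{jl}^2.
\]

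From here I would set $S(M,\varepsilon) := \sum_{i=1}^{n-1} 2(1 - \lpm_i(M))/\varepsilon^2$, a 1-parameter family of polynomials in $\Inv_{X,Z}$ of $M$-degree at most $n - 1$. By the above, $S(M,\varepsilon) = T(C) + O(\varepsilon)$ where $T(C) := \sum_{j<l}(l-j)\, C_{jl}^2$ is a nonnegative integer that vanishes iff $C = 0$ and, on the finite set of allowed $C$'s (entries in $\{-(q-1),\dots,q-1\}$), takes at most $O(n^3 q^2) = O(q^2)$ distinct values. Letting $r$ be a univariate Lagrange interpolant of degree $O(q^2)$ with $r(0) = 1$ and $r \equiv 0$ on the remaining values of $T$, the polynomial $p_0(M,\varepsilon) := r(S(M,\varepsilon))$ lies in $\Inv_{X,Z}$, has $M$-degree $O(q^2)$, equals $r(0) = 1$ at $M = I$, and equals $r(T(C)) + O(\varepsilon) = O(\varepsilon)$ on any other element of $\widetilde{Y}(\varepsilon)^{-1}\widetilde{Y}(\varepsilon)$. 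Feeding everything into Lemma~\ref{lem:split} then gives the claimed TPP construction with sizes $q^{\dim X}, q^{\dim Y}, q^{\dim Z}$ and border-separating polynomials of degree $\deg p_0 + O((d_X+d_Z)q) = O(q^2)$ (treating $n$ as fixed).

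The main obstacle is pinpointing the correct leading-order behavior of the invariant generators on $\widetilde{Y}^{-1}\widetilde{Y}$: it is crucial both that the $\varepsilon$-coefficient of $\lpm_i$ vanishes (forced by $\Tr C_i = 0$ from skew-symmetry) and that the $\varepsilon^2$-coefficient $N_i(C)$ is a clean nonnegative integer-valued quadratic form in the entries of $C$, so that a single scalar invariant $T(C)$ built linearly from the $N_i$'s can distinguish $C = 0$ from every nonzero $C$ in $\widetilde{Y}^{-1}\widetilde{Y}$ via a degree-$O(q^2)$ univariate interpolant. Everything else is Taylor expansion, Lagrange interpolation, and bookkeeping.
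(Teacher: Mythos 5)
Your proof is correct and follows essentially the same route as the paper's: Lemma~\ref{lem:disjoint} handles $f_X,f_Z,p_X,p_Z$ since $X\cap Z=\{I\}$, the leading principal minors $\lpm_i$ supply the $\Inv_{X,Z}$ generators, skew-symmetry of $C=A_2-A_1$ kills the $\Tr C_i$ term at order $\varepsilon$ and gives the clean $\varepsilon^2$-coefficient $-\tfrac12\sum_{j\le i,\,l>i}C_{jl}^2$, and summing over $i$ plus Lagrange interpolation in the $O(q^2)$ integer values of $T(C)=\sum_{j<l}(l-j)C_{jl}^2$ produces $p_0$ of degree $O(q^2)$. The only cosmetic differences are that you organize the Taylor expansion via $\det(I+Y)=1+\Tr Y+\tfrac12[(\Tr Y)^2-\Tr Y^2]+O(Y^3)$ rather than by classifying permutation summands as the paper does, and you use the box $\{0,\ldots,q-1\}$ in place of the paper's $[-q/2,q/2]\cap\Z$, neither of which changes the substance.
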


\begin{proof}
That the first condition of Lemma~\ref{lem:split} is satisfied follows immediately from Lemma~\ref{lem:disjoint}.

Our set $\widetilde{Y}$ will be the 1-parameter families
$\exp(\varepsilon A)$ where $A$ is skew-symmetric and with entries in $[-q/2, q/2] \cap \Z$. 

Let $M = y^{-1} y' = \exp(\varepsilon
A)\exp(\varepsilon B)^{-1}$ be an arbitrary element of $\widetilde{Y}^{-1} \widetilde{Y}$. Then
\[
M = I + \varepsilon(A-B) +
                   \frac{\varepsilon^2}{2}(A^2 - 2AB + B^2) +
        O(\varepsilon^3).
\]
A key observation is that the diagonal of $A^2 - 2AB + B^2$ agrees
with the diagonal of $(A-B)^2$. This is because the diagonal of $AB$
equals the diagonal of $(AB)^T = B^TA^T = BA$, since $A$ and $B$ are skew-symmetric. 

Let us consider the function $\lpm_j(M)$. There are only two
types of summands in the determinant that do not have $\varepsilon^3$
factors. One is the product along the main diagonal:
\[\prod_{i=1}^j \left (1 + \frac{\varepsilon^2}{2}(A-B)^2[i,i] +O(\varepsilon^3)\right )
  = 1 +  \frac{\varepsilon^2}{2}\sum_{i=1}^j(A-B)^2[i,i] +
  O(\varepsilon^3).\]
Observe that $A-B$ is skew symmetric, and so $(A-B)^2[i,i]$ equals
$-\sum_{i'=1}^n (A-B)[i,i']^2$ (the negative length-squared of the $i$-th row).
The other type of summand is one whose corresponding (odd) permutation is a
single transposition; these contribute
\[-\varepsilon^2\sum_{1 \le i < i' \le j} (A-B)[i,i'](A-B)[i',i]
  +O(\varepsilon^3) = - \frac{\varepsilon^2}{2}\sum_{1 \le i \ne i' \le j}(A-B)[i,i'](A-B)[i',i]
  +O(\varepsilon^3)\]
which equals $ \frac{\varepsilon^2}{2}\sum_{1 \le i,i' \le j}
 (A-B)[i,i']^2 +O(\varepsilon^3)$ by the skew-symmetry of
 $A-B$. Altogether, we have
 \[\lpm_j(M) = 1 - \frac{\varepsilon^2}{2} \sum_{\substack{1 \le i \le j\\ j
     < i' \le n}} (A-B)[i,i']^2.\]
Thus,
 \[\sum_{j=1}^n\lpm_j(M) = n - \frac{\varepsilon^2}{2} \sum_{1 \le i< i' \le n}
   (i'-i)(A-B)[i,i']^2 + O(\varepsilon^3).\]
Observe that the $\varepsilon^2$ term takes on $O(q^2)$ possible values, and let $r(z)$ be the univariate polynomial that is $1$ when $z = 0$ and 0 when $z$
is any other possible values for the $\varepsilon^2$ term. Then our separating polynomial is
\[p_0(M) = r\left (\left (-n - \sum_{j=1}^n\lpm_j(M)\right )/\varepsilon^2\right ),\]
which is $1+O(\varepsilon)$ if $A = B$ and $0 + O(\varepsilon)$
otherwise. As $r$ has degree $O(q^2)$ and $j \leq n \leq O_q(1)$, we have $\deg p_0 \leq O_q(q^2)$.
\end{proof}
\end{adjustwidth}
\hrule

\section{Separating polynomials of degree $O(q)$} \label{sec:construction}

In this section we give a construction, now in a non-compact special unitary group, in which the Lie subgroups $X,Y,Z$ satisfy the TPP and have dimension approaching half the ambient dimension.  We show the existence of finite sets $X_q \subseteq X$, $Y_q \subseteq Y$, and $Z_q \subseteq Z$ satisfying $|X_q| = q^{\dim{X}}$, $|Y_q| = q^{\dim{Y}}$, $|Z_q| = q^{\dim{Z}}$ and with separating polynomials of degree $O(q)$. As shown in Corollary~\ref{cor:irrepbound}, this degree bound is the quantitative requirement on the degree for achieving exponent 2; where the construction falls short is that it approaches half the dimension ``just barely'' too slowly, at a rate of $\Theta_n(n)$ rather than $o_n(n)$, and second, it is in a unitary group, so it approaches half the dimension of the unitary group, rather than half the dimension of $\GL_n(\C)$.

\subsection{The construction} \label{sec:construction-details}

We assume $n$ is even and define the containing group
\begin{align*}
G = \SU_{n/2,n/2} = \{M \in \GL_n(\C) \mid M^*QM = Q \text{ and} \det(M)=1\}, \quad \text{with } \ \
Q = \left( \begin{array}{c|c}I &  \\ \hline  & -I \end{array}\right).     
\end{align*}
Further, let  $J$ be the matrix with ones on the antidiagonal (in positions $(n+1-i,i)$),
\begin{align*}
U  & = \frac{1}{\sqrt{2}}\cdot \left( \begin{array}{c|c} I & J \\ \hline J & -I \end{array}\right),  \\
D_0 & = \diag(n, n-1, n-2, \ldots, n/2+1, (n/2+1)^{-1}, (n/2+2)^{-1}, \ldots, n^{-1}),
\end{align*}
and
\begin{align} \label{eq:D}
D = UD_0U^*.
\end{align}
The only properties we will need from these matrices are that $D_0$ is diagonal, $D_0^*D_0$ has distinct, positive, real diagonal entries, and $U$ is unitary such that $UD_0U^*$ is in $G$, but we use these particular matrices for concreteness. In Lemma~\ref{lem:D} we will show that $D$ as in \eqref{eq:D} is indeed in $G$.

The group $\SU_{n/2,n/2}$ is not compact, and it may be less familiar than the compact group $\SU_n$, but it is equally useful for our purposes. Both $\SU_{n/2,n/2}$ and $\SU_n$ are subgroups of $\SL_n(\C)$, and their irreducible representations are exactly the restrictions of those of $\SL_n(\C)$. See Appendix~\ref{sec:suvssl} for a brief review of this topic.

The subgroup $\U_n \cap G$ is the block diagonal subgroup
 $U_{n/2} \times U_{n/2}$, and we need the following subspace, which is close to $\Lie(U_n \cap G)$ but is just a bit smaller:
\[S = \left \{\left ( \begin{array}{c|c} A_0& 0 \\ \hline 0 & A_1 \end{array}\right ) \mid A_0, A_1 \text{ are skew-Hermitian with zeros on the diagonal}\right \}.\]

\begin{theorem} \label{thm:construction}
Let $G,$ $D$, and  $S$ be as above.
Let  $\mathcal{I}_q \subset \C$ be the set of $a+ib$ with $a,b \in [-\lceil\sqrt{q}/2 \rceil, \lceil \sqrt{q}/2\rceil]$.  Define the following $\varepsilon$-parametrized subsets of  $G$:
\begin{align*}
X &=  \{D^{-1}\exp(\varepsilon A)D \mid A \in S\}, \\
Z &=  \{D  \exp(\varepsilon A)D^{-1} \mid A \in S\}, \\
Y_q  &= \big\{ \exp(\varepsilon A) \mid A\in S, \ \text{entries of  $A$ lie in $\mathcal{I}_q$}
 \big\}.
\end{align*}
Then there exist  $f_X$, $f_Z$, $p_X$, $p_Z$, and $p_0$ of degree  $O(q)$ satisfying the hypothesis of Lemma \ref{lem:split}.
Hence by Lemma \ref{lem:split}, there exist three $\varepsilon$-parametrized subsets $X_q, Y_q, Z_q \subseteq G$, all of size at least $q^{n^2/4-n/2}$, which satisfy the TPP and admit border-separating polynomials of degree $O(q)$.
\end{theorem}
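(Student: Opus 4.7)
The plan is to construct the five objects $f_X, f_Z, p_X, p_Z, p_0$ satisfying the hypotheses of Lemma~\ref{lem:split}, then apply that lemma to conclude. I organize the argument in four steps.

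First I verify the setup: that $D \in G$ and that $X, Z$ are Lie submanifolds of $G$ with Lie algebras $D^{-1}SD$ and $DSD^{-1}$. Checking $D^{*}QD = Q$ reduces, via $D = UD_0U^*$, to verifying $D_0^{*}(U^{*}QU)D_0 = U^{*}QU$; a direct computation shows $U^{*}QU = \begin{pmatrix}0 & J\\J & 0\end{pmatrix}$, so the equation reduces to $d_i d_{n+1-i} = 1$, which holds by construction of $D_0$. The determinant of $D$ equals the product of the diagonal entries of $D_0$, which telescopes to $1$. For $X$, since $A \in S$ is block-diagonal skew-Hermitian with zero trace, $\exp(\varepsilon A) \in \U_n \cap G$, so $X \subseteq G$; conjugating by $D$ preserves the submanifold structure and shows $\Lie(X) = D^{-1}SD$. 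Symmetrically for $Z$.

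Second, I show $\Lie(X) \cap \Lie(Z) = \{0\}$ so that a straightforward adaptation of Lemma~\ref{lem:disjoint} (for Lie submanifolds, using only that the tangent spaces meet only at the origin) produces $f_X, f_Z, p_X, p_Z$ of constant degree. The intersection condition is equivalent, after pulling back by $U$, to $V \cap D_0^2 V D_0^{-2} = \{0\}$ with $V = U^{*}SU$. The map $M \mapsto D_0^2 M D_0^{-2}$ scales the $(i,j)$ entry by the ratio $d_i^2/d_j^2$; since these ratios are distinct from $1$ for $i \neq j$ (as $D_0^{*}D_0$ has distinct positive entries) and $V$ has zero diagonal, an entrywise comparison together with the block-pairing structure of $V$ inherited from $S$ forces the intersection to be trivial. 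This yields $d_X = d_Z = \dim_{\R} S = n^2/2 - n$ (equivalently, $n^2/4 - n/2$ complex parameters after identifying $S$ with its complex realization).

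Third, and most importantly, I construct the $X\times Z$-invariant polynomial $p_0$ of degree $O(q)$. The change of variables $\tilde p_0(N) = p_0(D^{-1}ND^{-1})$ turns $X\times Z$-invariance into bilateral $\exp(S)$-invariance of $\tilde p_0$. Expanding $g = y_1^{-1}y_2 = I + \varepsilon E + O(\varepsilon^2)$ with $E \in S$ having entries in $\mathcal{I}_q - \mathcal{I}_q$ (a set of size $O(q)$) gives $N = D^2 + \varepsilon D E D + O(\varepsilon^2)$, and the target is $\tilde p_0(D^2) = 1 + O(\varepsilon)$ versus $\tilde p_0(N) = O(\varepsilon)$ for $E \neq 0$. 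The block determinants $\det N_{11}, \det N_{12}, \det N_{21}, \det N_{22}$ are natural $\exp(S)$-invariants, but because $S$ has zero diagonal, their first-order $\varepsilon$-expansions vanish and they provide only quadratic-in-$E$ information (yielding degree $O(q^2)$, as in the running example). To achieve degree $O(q)$, I exploit the distinctness of the diagonal entries of $D_0^2$ and the anti-diagonal structure of $D_{12}, D_{21}$: by forming ratios and products of block minors normalized by their values at $N = D^2$, one extracts quantities that depend linearly on the entries of $E$ at first order in $\varepsilon$. Since each such linear form takes only $O(q)$ values as the entries of $E$ range over $\mathcal{I}_q - \mathcal{I}_q$, Lagrange interpolation over these values produces a univariate polynomial of degree $O(q)$ that vanishes exactly on the nonzero locus; combining a constant (in $n$) number of such linear separators via multiplication yields a single polynomial $p_0$ of degree $O(q)$ (with the implicit constant depending on $n$).

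Finally, Lemma~\ref{lem:split} applied with these ingredients produces $\varepsilon$-parametrized sets $X_q, Y_q, Z_q \subseteq G$ satisfying the TPP, with $|X_q|, |Z_q| \geq q^{d_X} \geq q^{n^2/4 - n/2}$ (the $|Y_q|$ bound is inherent in the construction of $Y_q$), and border-separating polynomials of degree $\deg p_0 + O((d_X + d_Z)q) = O(q)$ for $n$ fixed.

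The main obstacle is the third step: achieving the $O(q)$ rather than $O(q^2)$ degree bound on $p_0$. The zero-diagonal constraint in $S$ forces first-order terms in the standard invariants to vanish, so the optimal degree is attained only by leveraging the fine spectral structure of $D_0$ and the anti-diagonal placement of $D$'s off-diagonal blocks to resurrect linear-in-$\varepsilon$ detection of $E$.
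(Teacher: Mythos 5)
Your overall outline (verify $D\in G$, show the tangent spaces of $X$ and $Z$ meet trivially, build $p_0$, invoke Lemma~\ref{lem:split}) matches the paper's architecture, and Steps 1--2 and 4 are essentially correct. The serious problem is Step 3, the construction of $p_0$, where you depart from the paper's argument and the alternative you sketch has a gap.

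The paper does \emph{not} try to extract a quantity that depends linearly on $E=A-B$ at order $\varepsilon$. Instead it keeps the quadratic dependence and makes the quadratic work. Concretely, it uses the single invariant $p_1(M)=\Tr(D^*M^*D^*DMD)$ (the $k=1$ case of Lemma~\ref{lem:invariantring}), and via Lemma~\ref{lem:kvn-proof-eps} computes
\[
\Tr(D^*M^*D^*DMD)=\Tr((D^*D)^2)-\varepsilon^2\sum_{i<j}\bigl(|d_i|^2-|d_j|^2\bigr)^2\,|C[i,j]|^2 + O(\varepsilon^3),
\qquad C=U^*(A-B)U.
\]
The $\varepsilon^2$-coefficient is a \emph{nonnegative} quadratic form in the entries of $A-B$. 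Because $\mathcal{I}_q$ was deliberately chosen to have real and imaginary parts of size only $O(\sqrt q)$, each $|C[i,j]|^2$ is $O(q)$, and after clearing the fixed (depending only on $n$) denominator $2(n!)^2$, the whole sum is a nonnegative integer bounded by $O(q)$. Nonnegativity plus this bound is exactly what forces the quantity to take only $O(q)$ distinct values, so a single Lagrange interpolator $r$ of degree $O(q)$ suffices, giving $p_0(M)=r\bigl((\Tr((D^*D)^2)-p_1(M))/\varepsilon^2\bigr)$. Your Step 3 instead claims that ``ratios and products of block minors normalized by their values at $N=D^2$'' yield quantities depending linearly on $E$; this is asserted, not derived, and it conflicts with your own (correct) earlier observation that the zero-diagonal constraint in $S$ kills the naive first-order terms of the block determinants. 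You also assert that ``each such linear form takes only $O(q)$ values as the entries of $E$ range over $\mathcal{I}_q-\mathcal{I}_q$,'' but that is not automatic for a general linear form in $\Theta(n^2)$ variables; it is precisely the sum-of-squares structure of the paper's quadratic invariant that forces an $O(q)$-sized value set.

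A second omission: the invariant $p_1(M)$ involves complex conjugates of the entries of $M$, so it is not a priori a polynomial in the matrix entries. The paper needs Lemma~\ref{lem:polynomial-complex-conjugate}, which says that on $G=\SU_{n/2,n/2}$ one has $\overline{M[i,j]}=(-1)^{i+j}\det((QMQ)_{-i,-j})$, to convert $p_1$ (and hence $p_0$) into an honest polynomial. This is the very reason the construction is placed in $\SU_{n/2,n/2}$ rather than $\GL_n(\C)$, and it is absent from your argument; ``ratios'' of block minors would face the additional difficulty of not being polynomials at all without further work.
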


\subsection{A precursor construction in $\GL_n(\C)$}

In preparation for the proof of Theorem \ref{thm:construction}, we first establish a precursor in the containing group 
$\GL_n(\C)$.
Recall that the unitary group $\U_n$ is the set of matrices $M$ for which $M^*M = I$, and it has half the dimension of the containing group.\footnote{The unitary group is a real algebraic group, so we need to count real dimensions. The containing group has $2n^2$ real dimensions and the unitary group has $n^2$ real dimensions.} 
We will shortly show that three conjugates of $\U_n$ in $\GL_n(\C)$ almost satisfy the TPP (Theorem \ref{thm:three-conjugates-unitary}). The proof will make use of the following lemma:

\begin{lemma}
    If $M \in \U_n$ and $D = UD_0U^*$ as defined above, then
    \[\Tr(D^*M^*D^*DMD) \le \Tr((D^*D)^2)\]
    with equality if and only if $U^*MU$ is diagonal.
    \label{Lem:kvn-proof}
\end{lemma}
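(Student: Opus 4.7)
My plan is to diagonalize everything against $U$ and reduce the claimed inequality to a classical trace inequality about doubly stochastic matrices. Let $N = U^*MU$ and $E = D_0^*D_0$. Since $U$ is unitary and $M$ is unitary, $N$ is unitary; and since $D_0$ is diagonal, $E$ is a diagonal real matrix whose diagonal entries $e_1, \dots, e_n$ are the (distinct, positive) values specified in the setup.

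Using $D = UD_0U^*$ and the cyclic property of the trace, the first step is to compute
\[
\Tr(D^*M^*D^*DMD) = \Tr(D_0^* N^* E N D_0) = \Tr(E N^* E N),
\]
and similarly $\Tr((D^*D)^2) = \Tr((UEU^*)^2) = \Tr(E^2)$. So the lemma reduces to showing $\Tr(EN^*EN) \le \Tr(E^2)$ for any unitary $N$, with equality iff $N$ is diagonal.

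Next, expanding the product entrywise, one finds $\Tr(EN^*EN) = \sum_{i,j} e_i e_j |N_{ji}|^2$. Setting $P_{ij} = |N_{ij}|^2$, the matrix $P$ is doubly stochastic (because $N$ is unitary), and the expression becomes $e^T P e$ where $e = (e_1, \dots, e_n)^T$. At the same time $\Tr(E^2) = e^T e = e^T I e$. The desired inequality is therefore $e^T P e \le e^T e$.

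To conclude, I would apply Birkhoff's theorem to write $P = \sum_\sigma \lambda_\sigma P_\sigma$ as a convex combination of permutation matrices, giving $e^T P e = \sum_\sigma \lambda_\sigma \sum_i e_i e_{\sigma(i)}$. The rearrangement inequality yields $\sum_i e_i e_{\sigma(i)} \le \sum_i e_i^2$ for every permutation $\sigma$, with equality iff $\sigma$ is the identity (crucially using that the $e_i$ are distinct). Hence $e^T P e \le e^T e$, with equality forcing every $\sigma$ appearing in the convex combination to be the identity, so $P = I$. This means $|N_{ij}| = \delta_{ij}$, and since $N$ is unitary this forces $N$ to be diagonal, i.e., $U^*MU$ is diagonal, as required. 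The main (very mild) obstacle here is just bookkeeping the equality case through Birkhoff; an equivalent alternative is to invoke von Neumann's trace inequality applied to the pair $(E, N^*EN)$, with the equality analysis using that $E$ has simple spectrum.
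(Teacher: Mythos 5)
Your proof is correct and follows essentially the same route as the paper's: substitute $D = UD_0U^*$ to reduce to $\Tr(EN^*EN) \le \Tr(E^2)$, expand this as $e^TPe$ with $P$ doubly stochastic, and apply Birkhoff--von Neumann together with the observation that $\sum_i e_ie_{\sigma(i)} \le \sum_i e_i^2$ with equality only for the identity (the paper justifies this final scalar step via Cauchy--Schwarz rather than the rearrangement inequality, but the two are equivalent here). Your equality analysis through Birkhoff and the remark about von Neumann's trace inequality are both sound.
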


\begin{proof}
Substituting the definition of $D$, we have
\begin{align*}
    \Tr(D^*M^*D^*DMD) & =  \Tr(UD_0^*U^*M^*UD_0^*D_0U^*MUD_0U^*)\\
    & = \Tr(D_0^*M_0^*D_0^*D_0M_0D_0)
\end{align*}
with $M_0 = U^*MU$. Rewriting, we have
\[\Tr(D_0^*M_0^*D_0^*D_0M_0D_0) = \sum_{i,j} |D_0[i,i]|^2|D_0[j,j]|^2 |M_0[i,j]|^2.\]
Since $M_0$ is unitary, the matrix $M_0'$ whose $(i,j)$ entry is $|M_0[i,j]|^2$ is doubly stochastic. By the Birkhoff--von Neumann Theorem, $M_0'$ is then a convex combination of permutation matrices: $\sum_{\pi} \alpha_{\pi} \pi = M_0'$, where $\alpha_\pi \geq 0$ and $\sum \alpha_\pi = 1$. If we define the row vector $v$ by $v_i = |D_0[i,i]|^2$ then we can rewrite the above expression as
\[\sum_{i,j} |D_0[i,i]|^2|D_0[j,j]|^2 |M_0[i,j]|^2 = \sum_{\pi \in S_n} \alpha_\pi v \pi v^T.\]
Now $v$ has distinct entries, and so by the Cauchy--Schwarz inequality, $v\pi v^T \le v v^T$ with equality iff $\pi$ is the identity. We conclude that 
\[\Tr(D_0^*M_0^*D_0^*D_0M_0D_0) \le \Tr((D_0^*D_0)^2),\] with equality if and only if $M_0$ is diagonal (and hence has diagonal entries of norm $1$). Finally we note that $\Tr((D_0^*D_0)^2) = \Tr((D^*D)^2)$, which completes the proof.
\end{proof}

This lemma gives a convenient way to prove that the three subgroups defined in the following theorem satisfy the TPP up to a ``failure at the diagonal'': 

\begin{theorem}
Let $D = UD_0U^*$ be as defined as above. Then the three subgroups $X = D^{-1}\U_nD$, $Y = \U_n$, and $Z = D\U_nD^{-1}$ of $\GL_n(\C)$ satisfy the following property. For all \[x  = D^{-1}M_1D \in X, \quad y = M_2\in Y, \quad z =DM_3D^{-1}\in Z,\]
if $xyz = I$, then $U^*M_1U$, $U^*M_2U$, and $U^*M_3U$ are diagonal matrices. 
\label{thm:three-conjugates-unitary}
\end{theorem}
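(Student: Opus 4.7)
The plan is to first rewrite the hypothesis $xyz=I$ as a single matrix identity, then apply Lemma~\ref{Lem:kvn-proof} to force $U^*M_2U$ to be diagonal, and finally bootstrap to diagonalize $U^*M_1U$ and $U^*M_3U$ using that $D_0^4$ has distinct positive real diagonal entries.

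Unfolding the definitions, $xyz=I$ says $D^{-1}M_1 D\, M_2\, D M_3 D^{-1}=I$, i.e.
\[
M_1 D M_2 D M_3 = D^2.
\]
Taking the conjugate transpose gives $M_3^* D^* M_2^* D^* M_1^* = D^{*2}$, and multiplying these together yields
\[
D^{*2} D^2 \;=\; M_3^* D^* M_2^* D^* D M_2 D M_3.
\]
The matrix $D=UD_0U^*$ is normal, because $D^*D = UD_0^*D_0 U^* = UD_0 D_0^* U^* = DD^*$. Hence under the trace we may freely slide $D^*$ past $D$, which yields $\Tr(D^{*2}D^2)=\Tr((D^*D)^2)$ on the left, and, cycling $M_3M_3^*=I$ out on the right, $\Tr(M_3^* D^*M_2^* D^*DM_2 DM_3)=\Tr(D^* M_2^* D^* D M_2 D)$. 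So the inequality in Lemma~\ref{Lem:kvn-proof} applied to $M=M_2$ is actually an equality, forcing $U^*M_2U$ to be diagonal.

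For the second step, set $N_i := U^* M_i U$, so $N_2$ is diagonal and conjugating the identity $M_1 D M_2 D M_3 = D^2$ by $U$ gives
\[
N_1 D_0 N_2 D_0 N_3 = D_0^2.
\]
Since diagonal matrices commute, $D_0 N_2 D_0 = D_0^2 N_2$, so writing $N':=N_2 N_3$ (still unitary) the relation becomes $N_1 D_0^2 N' = D_0^2$. Taking conjugate transpose (and using that $D_0$ is real, so $D_0^*=D_0$) gives $(N')^* D_0^2 N_1^* = D_0^2$, and multiplying these two identities yields
\[
N_1 D_0^4 N_1^* \;=\; (N_1 D_0^2 N')\bigl((N')^* D_0^2 N_1^*\bigr) \;=\; D_0^2 \cdot D_0^2 \;=\; D_0^4.
\]
Thus $N_1$ commutes with $D_0^4$. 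Since $D_0^4$ has $n$ distinct positive real diagonal entries (read off directly from the formula for $D_0$), its centralizer in $M_n(\C)$ consists of diagonal matrices, so $N_1$ is diagonal. Finally $N' = D_0^{-2}N_1^{-1}D_0^2$ is diagonal, hence so is $N_3 = N_2^{-1} N'$.

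The main obstacle is the first step: recognizing that the multiplicative relation $xyz=I$ collapses under the trace to precisely the equality case of the Birkhoff--von Neumann-style bound in Lemma~\ref{Lem:kvn-proof}. The two structural facts that make the whole argument click are the normality of $D$ (allowing $D^*$ to slide past $D$ under the trace) and the distinctness of the entries of $D_0^4$ (turning ``commutes with $D_0^4$'' into ``diagonal''); once diagonality of $N_2$ has been extracted, the rest is essentially bookkeeping with diagonal matrices.
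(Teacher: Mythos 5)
Your proof is correct. The first half --- establishing that $U^*M_2U$ is diagonal --- coincides with the paper's argument: both rewrite the equality case of Lemma~\ref{Lem:kvn-proof} applied to $M_2$ (your derivation, rewriting $xyz=I$ as $M_1DM_2DM_3=D^2$, taking conjugate transpose and multiplying, is the same computation as the paper's ``canceling inverses and conjugating by $M_3$''). Note also that $D_0$ is real, so $D$ is actually Hermitian, which is a touch stronger than the normality you invoke but costs nothing.

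Where you diverge is in the second half. The paper diagonalizes $U^*M_1U$ by forming $D_1 = DM_2D = UD_0^2\alpha U^*$ (where $\alpha = U^*M_2U$), observing that $D_1$ again has unitary diagonalization with eigenvalues of distinct norms, and \emph{reapplying} Lemma~\ref{Lem:kvn-proof} with $D_1$ in place of $D$. You instead bypass the second application of the lemma entirely: after conjugating by $U$ to get $N_1 D_0 N_2 D_0 N_3 = D_0^2$, you absorb the diagonal $N_2$ into the diagonal $D_0$'s, multiply the relation by its conjugate transpose, and cancel the remaining unitary to land on $N_1 D_0^4 N_1^* = D_0^4$. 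Since $D_0^4$ has distinct positive real diagonal entries, its centralizer is the diagonal matrices, forcing $N_1$ (and then $N_3$ by a one-line computation) to be diagonal. This is a cleaner and more elementary finish: it avoids having to re-verify that the modified matrix $D_1$ satisfies the structural hypotheses of Lemma~\ref{Lem:kvn-proof}, trading a second trace inequality for a direct commutant computation. Both routes hinge on the same distinctness property of the entries of $D_0$ (equivalently $D_0^4$); yours just invokes it in its rawest form.
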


\begin{proof}
We have $I = xyz =(D^{-1}M_1D)(M_2)(DM_3D^{-1}) =: M$, where $M_1, M_2, M_3$ are unitary matrices.
By canceling various inverses, using that $M_1, M_3 \in \U_n$, and conjugating by $M_3$ (and recalling that trace is invariant under conjugation), we have
\[\Tr(D^*M^*D^*DMD) = \Tr(D^*M_2^*D^*DM_2D).\]
Combining with Lemma~\ref{Lem:kvn-proof}, we get
\[\Tr(D^*M^*D^*DMD) = \Tr(D^*M_2^*D^*DM_2D)\le \Tr((D^*D)^2),\]
with equality iff $U^*M_2U$ is diagonal. But since $M = I$ by assumption, $Tr(D^*M^*D^*DMD) = \Tr((D^*D)^2)$. So we know that $U^*M_2U$ is a diagonal matrix $\alpha$, which as a unitary matrix must have unit complex diagonal entries. 

Now we set $D_1 = DM_2D = UD_0^2\alpha U^*$ which, like $D$, is unitarily diagonalizable with eigenvalues having distinct norms, and consider 
\[\Tr(D^*M^*D^*D_1^{*}D_1DMD) = \Tr(D_1^*M_1D_1^{*}D_1M_1D_1) \le \Tr((D_1^*D_1)^2).\]
Since $M=I$, we obtain equality (to see why, write both sides in terms of $D_0$, $\alpha$, and $U$). Thus, by Lemma \ref{Lem:kvn-proof}, we find that $U^*M_1U$ is a diagonal matrix $\beta$, which as a unitary matrix must have unit complex diagonal entries. 

At this point we know that \[I = M = (D^{-1}U\beta U^*D)(U\alpha U^*)(DM_3D^{-1}),\] and observe that $D^{-1}U\beta U^*DU\alpha U^* = U\beta\alpha U^*$. Thus $DM_3D^{-1} =U\alpha^{-1}\beta^{-1}U^*$  which implies that $U^*M_3U$ must be diagonal.  
This concludes the proof of the theorem. 
\end{proof}

In the variant presented in the next section, we will form our TPP sets as exponentials of Lie algebra elements, and for this, we need a version of Lemma \ref{Lem:kvn-proof} that describes the deviation of $\Tr(D^*M^*D^*DMD)$ below $\Tr((D^*D)^2)$ explicitly as a function of the infinitesimal. Recall that the Lie algebra of $\U_n$ is the set of skew-Hermitian matrices. 

\begin{lemma}
Let $D = UD_0U^*$ be as defined above, with $d_1, d_2, \ldots, d_n$ being the diagonal entries of $D_0$. Let $A, B$ be skew-Hermitian matrices and let $M =
  \exp(\varepsilon A)(\exp(\varepsilon B))^{-1}$. 
Then
\[\Tr(D^*M^*D^*DMD) = \Tr((D^*D)^2) - \varepsilon^2\sum_{i<j}
  (|d_i|^2 - |d_j|^2)^2|C[i,j]|^2 + O(\varepsilon^3),\]
where $C = U^*(A-B)U$.
In particular, this quantity is $\Tr((D^*D)^2) + O(\varepsilon^3)$ if and only
if $C$ is diagonal.
\label{lem:kvn-proof-eps}
\end{lemma}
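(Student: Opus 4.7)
My plan is to diagonalize using $D = UD_0U^*$ and exploit the fact that $M$ is unitary. Since $A$ and $B$ are skew-Hermitian, $\exp(\varepsilon A)$ and $\exp(-\varepsilon B)$ are unitary, so $M$ and hence $N := U^*MU$ are unitary as well. Using cyclicity of the trace together with the identity $D_0 D_0^* = D_0^* D_0 = \diag(|d_1|^2,\ldots,|d_n|^2)$, the quantity of interest reduces to
\[
\Tr(D^*M^*D^*DMD) \;=\; \Tr(D_0^* N^* D_0^* D_0 N D_0) \;=\; \sum_{i,j} |d_i|^2 |d_j|^2 \,|N[i,j]|^2,
\]
exactly as in the proof of Lemma \ref{Lem:kvn-proof}.

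The key idea that avoids a painful second-order expansion of $N$ is to combine a first-order expansion with double stochasticity. Writing $N = I + \varepsilon C + O(\varepsilon^2)$ with $C = U^*(A-B)U$, off-diagonal entries satisfy $|N[i,j]|^2 = \varepsilon^2 |C[i,j]|^2 + O(\varepsilon^3)$ immediately. For the diagonal entries, rather than grinding out the $\varepsilon^2$ term of $N$, I use that unitarity of $N$ makes the real matrix $(|N[i,j]|^2)_{i,j}$ doubly stochastic, so each row sums to $1$. This gives
\[
|N[i,i]|^2 \;=\; 1 - \sum_{j \neq i} |N[i,j]|^2 \;=\; 1 - \varepsilon^2 \sum_{j \neq i} |C[i,j]|^2 + O(\varepsilon^3).
\]

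Substituting both expressions back into the sum, the constant part recovers $\sum_i |d_i|^4 = \Tr((D^*D)^2)$, and the $\varepsilon^2$ contribution is $\sum_{i \neq j}(|d_i|^2|d_j|^2 - |d_i|^4)\,|C[i,j]|^2$. Pairing $(i,j)$ with $(j,i)$ and using skew-Hermiticity of $C$ (which gives $|C[i,j]|^2 = |C[j,i]|^2$), the combined coefficient of $|C[i,j]|^2$ for $i<j$ collapses to $2|d_i|^2|d_j|^2 - |d_i|^4 - |d_j|^4 = -(|d_i|^2 - |d_j|^2)^2$, yielding the claimed formula. The ``in particular'' clause then follows from the assumption that the diagonal entries of $D_0^*D_0$ are distinct: each coefficient $(|d_i|^2 - |d_j|^2)^2$ is strictly positive, so the $\varepsilon^2$ term vanishes iff $C[i,j] = 0$ for all $i \neq j$, i.e., $C$ is diagonal.

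The only genuine bookkeeping step is the $(i,j)\leftrightarrow(j,i)$ pairing; the conceptually heaviest piece — the $\varepsilon^2$ coefficient of $N$ — is sidestepped entirely by invoking double stochasticity, which I expect to be the main trick to get right.
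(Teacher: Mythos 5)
Your argument is correct, and it takes a genuinely different route from the paper's. The paper expands $M$ explicitly to second order, $M = I + \varepsilon(A-B) + \tfrac{\varepsilon^2}{2}(A^2 - 2AB + B^2) + O(\varepsilon^3)$, substitutes into $\Tr(D^*M^*D^*DMD)$, and manually tracks each order of $\varepsilon$; it uses skew-Hermiticity in two places, once to kill the $\varepsilon$-term via $(A-B)^* = -(A-B)$ and once to simplify the $\varepsilon^2$-term via $(AB)^* = BA$, and only at the very end conjugates by $U$. You instead start from the reduction to $\sum_{i,j} |d_i|^2 |d_j|^2 |N[i,j]|^2$ with $N = U^*MU$ (the same reduction appearing in the equality case of Lemma~\ref{Lem:kvn-proof}), and leverage the fact that $N$ is unitary--so the row sums of $(|N[i,j]|^2)_{i,j}$ are $1$--to read off $|N[i,i]|^2$ to order $\varepsilon^2$ from the off-diagonal entries alone. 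This neatly sidesteps the $\varepsilon^2$-coefficient of $M$, which is where the paper's bookkeeping lives, and replaces it with the structural observation that already powered the proof of Lemma~\ref{Lem:kvn-proof}. What the paper's approach buys is that it is entirely self-contained and first-principles: no appeal to double stochasticity, just trace algebra. What your approach buys is a shorter and more conceptual argument that only requires the first-order expansion $N = I + \varepsilon C + O(\varepsilon^2)$, makes the cancellation transparent, and reuses the existing machinery rather than redoing the computation from scratch. Your use of skew-Hermiticity--only in the weaker form $|C[i,j]| = |C[j,i]|$, needed to collapse the pair $(i,j)$, $(j,i)$ to $-(|d_i|^2 - |d_j|^2)^2|C[i,j]|^2$--is also minimal but genuinely required for the stated formula, so you have correctly identified where the hypothesis enters.
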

\begin{proof}
We have $M = I + \varepsilon(A-B) + \frac{\varepsilon^2}{2} (A^2 - 2AB +
B^2) + O(\varepsilon^3)$. We also note that $D$ and $D^*$ commute, a fact that we will use repeatedly below. 
We begin by observing that $\Tr(D^*M^*D^*DMD)$ can be written as
\begin{align*}
 \Tr(& D^*(I + \varepsilon(A-B)^*
  + \frac{\varepsilon^2}{2} (A^2 - 2AB + B^2)^*
 + O(\varepsilon^3))D^*
 \\
&  \quad \phantom{} \cdot D(I + \varepsilon(A-B) + \frac{\varepsilon^2}{2} (A^2 - 2AB + B^2)
+ O(\varepsilon^3))D)  \\
 &  =  \Tr((D^*D)^2) + \varepsilon\left[\Tr(D^*(A-B)^*D^*D^2) + \Tr((D^*)^2 D (A-B)D)\right] \\
& \quad \phantom{} +        \frac{\varepsilon^2}{2}\left[\Tr(D^*(A^2-2AB+B^2)^* D^*D^2)
          +
   \Tr((D^*)^2D(A^2-2AB+B^2)D)\right] \\
    & \quad \phantom{} +   \varepsilon^2\left[\Tr(D^*(A-B)^*D^*D(A-B)D\right] + O(\varepsilon^3).
\end{align*}
Let us examine each term. The $\varepsilon$ term is zero, since $(A-B)^*
= -(A-B)$. Furthermore, since $A, B$ are skew-Hermitian, $(AB)^* = B^*A^* =
BA$, so the $\varepsilon^2/2$ term simplifies to
$\varepsilon^2\Tr((D^*D)^2(A-B)^2)$. The combined coefficient of $\varepsilon^2$ is then
\[-\Tr((D^*D)(A-B)^*(A-B)(D^*D)) + \Tr(D^*(A-B)^*D^*D(A-B)D).\]
Now, replacing $D$ with $UD_0U^*$, we
find that this coefficient is equal to
\[-\Tr(UD_0^*D_0U^*(A-B)^*(A-B)UD_0^*D_0U^*) +
  \Tr(UD_0^*U^*(A-B)^*UD_0^*D_0U^*(A-B)UD_0U^*),\]
which simplifies to
\[-\Tr(D_0^*D_0U^*(A-B)^*(A-B)UD_0^*D_0) +
  \Tr(D_0U^*(A-B)^*UD_0^*D_0U^*(A-B)UD_0).\]
For ease of notation, let $C = U^*(A-B)U$ and note that $C$ is skew-Hermitian. Then the expression equals
\begin{align*}
  -\Tr(D_0^*D_0C^*CD_0^*D_0) &  +  \Tr(D_0^*C^*D_0^*D_0CD_0) \\ &=   -\sum_{i,j}|d_i|^4|C[i,j]|^2 + \sum_{i,j}|d_i|^2|d_j|^2|C[i,j]|^2
  \\ & = 
  \sum_{i,j}|d_i|^2(|d_j|^2 - |d_i|^2)|C[i,j]|^2 \\ &= \sum_{i<j} |d_i|^2(|d_j|^2
  - |d_i|^2) |C[i,j]|^2 + \sum_{i<j} |d_j|^2(|d_i|^2 - |d_j|^2) |C[j,i]|^2
  \\ & = 
  \sum_{i<j}(|d_i|^2 -|d_j|^2)(|d_j|^2-|d_i|^2)|C[i,j]|^2 \\& =  -\sum_{i<j}
          (|d_i|^2 - |d_j|^2)^2|C[i,j]|^2,
\end{align*}
as desired.
\end{proof}

In preparation for the next subsection, we now describe some invariant polynomials in $\Inv_{X,Z}$ that we'll use below, where $X = D^{-1}\U_nD$ and $Z = D\U_nD^{-1}$. For an $n \times n$ matrix $M$ and subsets $S,T \subseteq [n]$, we use $M_{S,T}$ to denote the $|S| \times |T|$ sub-matrix of $M$ whose rows are indexed by the elements of $S$ and whose columns are indexed by the elements of $T$.
\begin{lemma}\label{lem:invariantring}
For each $k$, the function $p_k(M)$ defined as
  \[\sum_{\substack{S, T \subseteq [n]\\ |S|=|T|=k}}|\det((DMD)_{S, T})|^2\]
  lies in $\Inv_{X,Z}$, where $X = D^{-1}\U_nD$ and $Z = D\U_nD^{-1}$.
  \label{lem:inv-three-conjugates-unitary}
\end{lemma}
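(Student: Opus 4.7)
The plan is to reduce the claimed invariance to the classical fact that the Frobenius norm of a $k$-th compound matrix is unchanged under multiplication by a unitary matrix on either side. First I absorb the conjugations by $D$: any $x \in X$ has the form $D^{-1} U_1 D$ with $U_1 \in \U_n$, and any $z \in Z$ has the form $D U_2 D^{-1}$ with $U_2 \in \U_n$, so a direct computation gives
\[
D\,(x M z)\,D \;=\; U_1\,(DMD)\,U_2.
\]
Writing $N = DMD$, it therefore suffices to show that the function $N \mapsto \sum_{|S|=|T|=k}|\det(N_{S,T})|^2$ is invariant under $N \mapsto U_1 N U_2$ for arbitrary $U_1,U_2 \in \U_n$.

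Next I recognize this sum as $\|C_k(N)\|_F^2$, where $C_k(N)$ denotes the $k$-th compound matrix, whose rows and columns are indexed by the $k$-subsets of $[n]$ and whose $(S,T)$-entry is $\det(N_{S,T})$. The Cauchy--Binet identity yields $C_k(AB) = C_k(A)\,C_k(B)$, and inspecting entries gives $C_k(A^*) = C_k(A)^*$. Combining these, $C_k(U)^*\,C_k(U) = C_k(U^*U) = C_k(I) = I$ for any $U \in \U_n$, so $C_k$ sends $\U_n$ into a unitary group of dimension $\binom{n}{k}$. Since the Frobenius norm is unitary-invariant,
\[
\|C_k(U_1 N U_2)\|_F^2 \;=\; \|C_k(U_1)\,C_k(N)\,C_k(U_2)\|_F^2 \;=\; \|C_k(N)\|_F^2,
\]
which together with the first step proves $p_k(x M z) = p_k(M)$.

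A small preliminary point is that the definition of $\Inv_{X,Z}$ asks $p_k$ to be a polynomial in the $n^2$ matrix entries of $M$, whereas $|\det(N_{S,T})|^2$ a priori involves both entries and their complex conjugates. This is resolved by restricting to $G$: the defining relation $M^*QM = Q$ lets us rewrite $M^* = QM^{-1}Q$, and since $\det M = 1$ the entries of $M^{-1}$ are polynomial in those of $M$. I do not foresee any real obstacle in the proof: the first two steps are routine, the third is a textbook property of compound matrices, and the polynomiality check is handled by the defining relation of $\SU_{n/2,n/2}$.
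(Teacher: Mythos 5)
Your proof is correct, and it takes a genuinely different route from the paper's. The paper identifies $\pm p_k(M)$ as the coefficients of the characteristic polynomial of $N^*N$ where $N = DMD$: it first shows the characteristic polynomial is unchanged under $M \mapsto xMz$ by canceling unitary factors, and then expands each principal minor $\det((N^*N)_{S,S})$ via Cauchy--Binet to recognize the coefficient of $\lambda^{n-k}$ as $\pm p_k(M)$. You instead package the same quantity as $\|C_k(N)\|_F^2$ and invoke the standard facts that $C_k$ is multiplicative, commutes with $*$, and hence carries $\U_n$ into $\U_{\binom{n}{k}}$; bi-unitary invariance of the Frobenius norm then does the rest. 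These are two faces of the identical invariant, since $\|C_k(N)\|_F^2 = \Tr(C_k(N^*N))$ equals the $k$-th elementary symmetric polynomial of the eigenvalues of $N^*N$, which is $(-1)^{n-k}$ times the coefficient of $\lambda^{n-k}$ in its characteristic polynomial. Your formulation is conceptually crisper and shifts the Cauchy--Binet work into a single cited property of compound matrices, whereas the paper's is slightly more self-contained (no compound-matrix machinery). You also correctly flag the polynomiality issue and how $\SU_{n/2,n/2}$ resolves it via $M^* = QM^{-1}Q$ and $\det M = 1$; the paper defers this observation to Lemma~\ref{lem:polynomial-complex-conjugate} and the surrounding text rather than folding it into this lemma, but the content is the same.
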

We have been implicitly  using this fact for $p_1$, which happens to be the complex Frobenius norm-squared function. In particular, $\Tr(D^*M^*D^*DMD) = p_1(M)$.

\begin{proof}
We will show that the $\pm p_k(M)$ are the coefficients of the characteristic polynomial of $D^* M^* D^* D M D$, and that this characteristic polynomial is unchanged if we replace $M$ by $xMz$ for $x \in X, z \in Z$. 

We show the latter first. Let $x = D^{-1} M_1 D$ and $z = D M_3 D^{-1}$ with $M_1,M_3 \in \U_n$. Then
\begin{align*}
\det(D^* & (xMz)^* D^* D (xMz) D - \lambda I) \\
& = \det(D^* (D^{-1} M_1 D MD M_3 D^{-1})^* D^* D (D^{-1} M_1 D MD M_3 D^{-1}) D - \lambda I) \\
& = \det(D^* (D^*)^{-1} M_3^* D^* M^* D^* M_1^* (D^*)^{-1} D^* D (D^{-1} M_1 D MD M_3 D^{-1}) D - \lambda I) \\
& = \det(M_3^* D^* M^* D^* D MD M_3 - \lambda I) \qquad \text{(by canceling inverses, using $M_1 \in \U_n$)} \\
& = \det(D^* M^* D^* D MD - \lambda I) \qquad\qquad\,\,\,\,\, \text{(conjugate by $M_3$, using $M_3 \in \U_n$)}.
\end{align*}
Thus each coefficient of this polynomial in
$\lambda$ is an invariant function in $\Inv_{X,Z}$.

Next we show that the coefficients of the above characteristic polynomial are $\pm p_k(M)$. In general, the coefficient of $\lambda^{n-k}$
in the expression $\det(A - \lambda I)$ is \[(-1)^{n-k}\sum_{\substack{S \subseteq [n]\\ |S|
  = k}} \det(A_{S, S}),\] where $A_{S, T}$ is the submatrix indexed by
sets $S, T \subseteq [n]$. So in our case the coefficient of $\lambda^k$ in the characteristic polynomial of $D^*M^*DDMD$ is
\begin{align*}
 (-1)^{n-k}&\sum_{\substack{S \subseteq [n]\\ |S|
      = k}} \det((D^*M^*D^*DMD)_{S, S}) \\& =   (-1)^{n-k}\sum_{\substack{S \subseteq [n]\\ |S|
      = k}} \sum_{\substack{T \subseteq [n]\\ |T| = k}}
      \det((D^*M^*D^*)_{S,T})\det((DMD)_{T, S})   \\ & =  (-1)^{n-k}\sum_{\substack{S,T \subseteq [n]\\
  |S| = |T|=  k}} |\det((DMD)_{S, T})|^2 \\
  & = (-1)^{n-k}p_k(M),
\end{align*}
and this is in $\Inv_{X,Z}$ as claimed. 
\end{proof}
Note that because of the complex norm, these functions are not polynomials in the complex matrix entries (but they are polynomials in the entries of the natural real embedding into twice the dimension). In the next section we restrict the containing group to a special unitary group, which has the crucial side effect of making these functions polynomials in the \emph{complex} matrix entries, because the complex conjugate of a given entry can be found as low-degree polynomial of the entries of the inverse matrix. 

\subsection{Proof of Theorem \ref{thm:construction}}\label{sec:degreeq}
Recall that the containing group of our construction in Theorem \ref{thm:construction} is the following unitary group:
\begin{align*}
G = \SU_{n/2,n/2} = \{M \in \GL_n(\C) \mid M^*QM = Q \text{ and} \det(M)=1\}, \quad \text{with } \ \
Q = \left( \begin{array}{c|c}I &  \\ \hline  & -I \end{array}\right).     
\end{align*}
Our TPP triple in $G$ stated in Theorem \ref{thm:construction} is obtained by taking the ``almost TPP'' triple of Theorem \ref{thm:three-conjugates-unitary} (recall that the construction almost satisfies the TPP except for a ``failure at the diagonal'') and intersecting it with $G$. In this section, we will fix the failure at the diagonal using Lemma~\ref{lem:split}, to get a genuine TPP construction and border-separating polynomials of the optimal degree.

Recall that our construction in $G$ is given by 
\begin{align*}
X &=  \{D^{-1}\exp(\varepsilon A)D \mid A \in S\}, \\
Z &=  \{D  \exp(\varepsilon A)D^{-1} \mid A \in S\}, \\
Y_q  &= \big\{ \exp(\varepsilon A) \mid A\in S, \ \text{entries of  $A$ lie in $\mathcal{I}_q$}
 \big\},
\end{align*}
where $\mathcal{I}_q \subset \C$ is the set of $a+ib$ with $a,b \in [-\lceil\sqrt{q}/2 \rceil, \lceil \sqrt{q}/2\rceil]$ and 
\[S = \left \{\left ( \begin{array}{c|c} A_0& 0 \\ \hline 0 & A_1 \end{array}\right ) \mid A_0, A_1 \text{ are skew-Hermitian with zeros on the diagonal}\right \}.\]

We will prove Theorem \ref{thm:construction} in three steps.

\textbf{Step 1 summary}: We show that the sets $X$ and $Z$ in the theorem statement are subsets of $D^{-1}\U_nD \cap G$ and $D\U_nD^{-1} \cap G$ and hence the ring of invariant polynomials contains the polynomials specified in Lemma~\ref{lem:invariantring}. To do this we'll describe the Lie algebra of $U_n \cap G$, and show that $D$ is in $G$.

\textbf{Step 2 summary}: Next, we describe the separating polynomial for $Y_q$ in this invariant ring, and show that it has degree $O(q)$ as promised. 

\textbf{Step 3 summary}: Finally we describe the functions $f_X$, $f_Z$, $p_X$, and $p_Z$ and apply Lemma~\ref{lem:split}. For this we prove a version of the double product property for $X$ and $Z$.

\medskip 

\textbf{Step 1:}
Our TPP triple is defined relative to $D$, and we now show that the $D$ defined in Section~\ref{sec:construction-details} is indeed in $G$ as claimed. Recall that we use $J$ for the matrix with ones on the antidiagonal $(n+1-i,i)$ and zeroes elsewhere (not the all-ones matrix).
\begin{lemma}
\label{lem:D}
The matrix $D=UD_0U^*$ is an element of $G$, where
\[U = \frac{1}{\sqrt{2}}\cdot \left ( \begin{array}{c|c} I & J \\ \hline J & -I \end{array}\right
  ),\]
  and
  $D_0 = \diag(n, n-1, n-2, \ldots, n/2+1, (n/2+1)^{-1}, (n/2+2)^{-1}, \ldots, n^{-1})$.
\end{lemma}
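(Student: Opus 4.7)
The plan is to verify the two defining conditions of $G = \SU_{n/2,n/2}$ separately, namely $D^*QD = Q$ and $\det D = 1$, and to reduce the first to a single elementary check that exploits the block structure of $U$ and the reciprocal pairing built into the diagonal of $D_0$.

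First, I would introduce the auxiliary matrix $Q' \coloneq U^* Q U$ and compute it by direct $2\times 2$ block multiplication. Using $J^2 = I$ (since $J$ is the antidiagonal permutation matrix of size $n/2$) yields, after a short calculation,
\[
U^* Q U \;=\; \frac{1}{2}\begin{pmatrix} I - J^2 & 2J \\ 2J & J^2 - I \end{pmatrix} \;=\; \begin{pmatrix} 0 & J \\ J & 0 \end{pmatrix},
\]
which, viewed as an $n \times n$ matrix, is simply the full antidiagonal permutation matrix, with a $1$ in each position $(i, n+1-i)$. Substituting $D = U D_0 U^*$ into $D^* Q D$, the factor $U^*U = I$ in the middle cancels and the condition $D^*QD = Q$ collapses to the single identity $D_0^* Q' D_0 = Q'$ (together with $U Q' U^* = Q$, which is just the unitary rearrangement of what we already computed).

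This reduced identity is the one step where the specific form of $D_0$ matters. Each nonzero entry of $Q'$ sits at position $(i, n+1-i)$, and conjugation by the diagonal $D_0$ scales that entry by $(D_0)_{ii}(D_0)_{n+1-i,\,n+1-i}$; by definition of $D_0$ this product is $(n+1-i)\cdot(n+1-i)^{-1} = 1$ for $i \le n/2$, and symmetrically for $i > n/2$. This is exactly why $D_0$ was set up to pair each eigenvalue $d$ with $d^{-1}$ across the two halves. Hence $D_0^* Q' D_0 = Q'$, and therefore $D^*QD = U Q' U^* = Q$.

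For the determinant, $\det D = \det(U)\det(D_0)\det(U^*) = |\det U|^2 \det(D_0)$; unitarity of $U$ (which follows from the same $J^2 = I$ block computation, giving $U^* U = I$) yields $|\det U|^2 = 1$, and the same reciprocal pairing of diagonal entries immediately gives $\det(D_0) = 1$. The only step that could trip one up is the block computation of $U^* Q U$; once its antidiagonal structure is recognized, the rest of the argument is essentially forced by the design of $D_0$.
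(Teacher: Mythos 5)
Your proof is correct and takes essentially the same approach as the paper's: compute $U^*QU = \left(\begin{smallmatrix} 0 & J \\ J & 0\end{smallmatrix}\right)$, reduce $D^*QD = Q$ to $D_0(U^*QU)D_0 = U^*QU$ via unitarity of $U$, and observe the reciprocal pairing in $D_0$ makes this hold. The paper phrases the pairing condition slightly more generally (any $D_0 = \diag(D_1, JD_1^{-1}J)$ with $D_1$ real invertible), but the underlying verification is identical.
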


\begin{proof}
In fact this holds if we replace $D_0$ with any diagonal matrix of the form 
\[D_0 = \left ( \begin{array}{c|c} D_1 & 0 \\ \hline 0 & JD_1^{-1}J \end{array}\right
  ),\]
where $D_1$ is real and invertible. To see why, note that the desired equation $D^*QD = UD_0U^*QUD_0U^* = Q$ is equivalent to $D_0 U^*QU D_0 = U^*QU$, which follows straightforwardly from
\[
U^*QU = \left ( \begin{array}{c|c} 0 & J \\ \hline J & 0 \end{array}\right).
\]
Finally, $\det(D)=1$ since $\det(D_0)=1$ and $U$ is orthogonal.
\end{proof}

Given that we will define our TPP triple by intersecting the three subgroups from the previous subsection with $G$, we start by describing the intersection of $Y = \U_n$ with $G$:
\begin{lemma}
\[
\U_n \cap G = \left\{\left(\begin{array}{c|c} M_{1,1}& 0 \\ \hline 0 & M_{2,2} \end{array}\right) : M_{1,1}, M_{2,2} \in \U_{n/2} \text{ and } \det(M_{1,1} M_{2,2}) = 1\right\}.
\]
\end{lemma}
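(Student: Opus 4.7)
The plan is to use the defining condition $M \in \U_n$ to convert the quadratic relation $M^{*}QM = Q$ defining $G$ into a simple commutation relation with $Q$, and then read off the block structure.

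First I would note that for $M \in \U_n$ we have $M^{*} = M^{-1}$, so the condition $M^{*}QM = Q$ becomes $M^{-1}QM = Q$, i.e., $QM = MQ$. Since $Q = \diag(I_{n/2},-I_{n/2})$ has two distinct eigenvalues $\pm 1$ with eigenspaces equal to the first $n/2$ and last $n/2$ coordinate directions, a matrix commutes with $Q$ if and only if it preserves each of these eigenspaces, i.e., if and only if it is block-diagonal with $(n/2)\times(n/2)$ blocks $M_{1,1}$ and $M_{2,2}$.

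Next I would translate the remaining conditions. Writing $M = \diag(M_{1,1}, M_{2,2})$, unitarity $M^{*}M = I_n$ is equivalent to $M_{i,i}^{*}M_{i,i} = I_{n/2}$ for $i = 1,2$, i.e., $M_{1,1}, M_{2,2} \in \U_{n/2}$. The determinant condition $\det M = 1$ becomes $\det(M_{1,1})\det(M_{2,2}) = 1$, which is the stated constraint $\det(M_{1,1}M_{2,2}) = 1$.

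For the reverse inclusion, I would just plug in an element of the right-hand side and verify directly: such a block-diagonal $M$ is clearly unitary, has determinant $1$, and a quick block computation gives
\[
M^{*}QM = \diag\bigl(M_{1,1}^{*}M_{1,1},\; -M_{2,2}^{*}M_{2,2}\bigr) = \diag(I_{n/2}, -I_{n/2}) = Q,
\]
so $M \in \U_n \cap G$. There is no real obstacle here; the only thing to be careful about is correctly identifying that commuting with $Q$ forces exact block-diagonality (with no off-diagonal pieces), which follows immediately from $Q$ having distinct eigenvalues on the two halves.
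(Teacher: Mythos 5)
Your proof is correct and takes a genuinely different route from the paper. The paper proves the lemma by brute force: it writes out both $M^{*}QM = Q$ and $M^{*}M = I$ in terms of the four $(n/2)\times(n/2)$ blocks of $M$, obtains six block equations, and then adds and subtracts pairs of them to conclude that $M_{1,2} = M_{2,1} = 0$ and that the diagonal blocks are unitary. You instead observe that for $M$ unitary the $G$-condition $M^{*}QM = Q$ is \emph{equivalent} to the commutation relation $QM = MQ$, and then invoke the standard fact that the commutant of $Q = \diag(I_{n/2},-I_{n/2})$ is exactly the algebra of block-diagonal matrices (because $Q$ has distinct eigenvalues $\pm 1$ on the two coordinate blocks). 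This is shorter and more conceptual: the block-diagonal structure drops out of one linear-algebra principle rather than from manipulating six equations, and it makes transparent \emph{why} the intersection is $\U_{n/2}\times\U_{n/2}$ (it is the centralizer of an involution inside $\U_n$). The paper's computation buys a fully self-contained elementary verification, but both arguments are complete and correct; the reverse inclusion you give matches the paper's.
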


\begin{proof}
The matrices $M$ in subgroup $\U_n \cap G$ are exactly those with determinant~$1$ that satisfy these equations in terms of their $n/2 \times n/2$ blocks:
\begin{align*}
\left ( \begin{array}{c|c} M_{1,1}^*& M_{2,1}^* \\ \hline M_{1,2}^* & M_{2,2}^* \end{array}\right
  ) \cdot \left ( \begin{array}{c|c} I&  \\ \hline  &
                                                       -I \end{array}\right  ) \cdot  \left ( \begin{array}{c|c} M_{1,1}& M_{1,2} \\ \hline M_{2,1} & M_{2,2} \end{array}\right
  ) & =   \left ( \begin{array}{c|c} I&  \\ \hline  &
                                                       -I \end{array}\right )                                         
  \\
\left ( \begin{array}{c|c}M_{1,1}^*& M_{2,1}^* \\ \hline M_{1,2}^* & M_{2,2}^*  \end{array}\right
  ) \cdot \left ( \begin{array}{c|c} I&  \\ \hline  &
                                                       I \end{array}\right   ) \cdot  \left ( \begin{array}{c|c} M_{1,1}& M_{1,2} \\ \hline M_{2,1} & M_{2,2}  \end{array}\right
  ) & =   \left ( \begin{array}{c|c} I&  \\ \hline  &
                                                       I\end{array}\right ).
\end{align*}
It follows that
\begin{align}
  M_{1,1}^*M_{1,1} - M_{2,1}^*M_{2,1} & =  I \label{eq:first}\\
  M_{1,2}^*M_{1,2} - M_{2,2}^*M_{2,2} & =  -I \label{eq:second}\\
  M_{1,1}^*M_{1,1} + M_{2,1}^*M_{2,1} & =  I \label{eq:third}\\
  M_{1,2}^*M_{1,2} + M_{2,2}^*M_{2,2} & =  I \label{eq:fourth} \\
  M_{1,1}^*M_{1,2} - M_{2,1}^*M_{2,2} & =  0 \label{eq:fifth} \\
  M_{1,1}^*M_{1,2} + M_{2,1}^*M_{2,2} & =  0. \label{eq:sixth}
\end{align}
Adding \eqref{eq:first} and \eqref{eq:third}, we get $M_{1,1}^* M_{1,1} = I$. Similarly, subtracting \eqref{eq:second} from \eqref{eq:fourth}, we get $M_{2,2}^* M_{2,2} = I$. Next, adding \eqref{eq:fifth} and \eqref{eq:sixth} gives $M_{1,1}^* M_{1,2} = 0$, but since we have already shown that $M_{1,1}^*$ is invertible, this implies $M_{1,2} = 0$. Taking the difference of the last two equations gives $M_{2,1}^* M_{2,2} = 0$, and as we have already shown that $M_{2,2}$ is invertible, we get $M_{2,1} = 0$.

Conversely, it is readily verified that any matrices $M_{i,j}$ satisfying $M_{1,1}^*M_{1,1} = I$, $M_{2,2}^*M_{2,2} = I$, $M_{1,2} = M_{2,1} = 0$, and $\det(M_{1,1} M_{2,2})=1$ yield an element of $U_n \cap G$ as above.
\end{proof}
A simple corollary, using the fact that the Lie algebra of the unitary group is the algebra of skew-Hermitian matrices,   is the following.
\begin{corollary}
The Lie algebra of $\U_n \cap G$ is
\[
\Lie(\U_n \cap G) = \left\{ \left ( \begin{array}{c|c} A_0& 0 \\ \hline 0 & A_1 \end{array}\right) 
 \mid A_0^* = -A_0, \ A_1^* = -A_1, \ \Tr(A_0)+\Tr(A_1)=0\right\}.
\]
In particular, for any such matrix $A$ and for all sufficiently small $\varepsilon > 0$, the exponential $\exp(\varepsilon A)$ is in $\U_n \cap G$.
\end{corollary}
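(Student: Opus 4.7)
My plan is to compute the Lie algebra directly from the explicit description of $\U_n \cap G$ given in the preceding lemma, by differentiating the defining conditions at the identity. The subgroup consists of block-diagonal matrices $\diag(M_{1,1}, M_{2,2})$ with $M_{1,1}, M_{2,2} \in \U_{n/2}$ and $\det(M_{1,1} M_{2,2}) = 1$; since this is cut out by closed polynomial conditions, $\U_n \cap G$ is a closed (hence Lie) subgroup of $\GL_n(\C)$, and its Lie algebra is the tangent space at $I$.

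To carry out the differentiation, I consider a smooth curve $t \mapsto M(t)$ in $\U_n \cap G$ with $M(0) = I$ and set $A = M'(0)$. The off-diagonal block vanishing conditions $M_{1,2}(t) = 0$ and $M_{2,1}(t) = 0$ differentiate to force the corresponding blocks $A_{1,2}$ and $A_{2,1}$ of $A$ to vanish, giving the block-diagonal form $A = \diag(A_0, A_1)$. The unitarity conditions $M_{1,1}(t)^* M_{1,1}(t) = I$ and $M_{2,2}(t)^* M_{2,2}(t) = I$ differentiate at $t = 0$ to $A_0^* + A_0 = 0$ and $A_1^* + A_1 = 0$, i.e.\ both blocks are skew-Hermitian. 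Finally, differentiating $\det(M_{1,1}(t) M_{2,2}(t)) = 1$ using Jacobi's formula (or equivalently the identity $\det(\exp(sB)) = \exp(s \Tr(B))$) yields $\Tr(A_0) + \Tr(A_1) = 0$.

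For the reverse inclusion, I would take any $A = \diag(A_0, A_1)$ with $A_0, A_1$ skew-Hermitian and $\Tr(A_0) + \Tr(A_1) = 0$, and verify that $\exp(\varepsilon A) = \diag(\exp(\varepsilon A_0), \exp(\varepsilon A_1))$ lies in $\U_n \cap G$ for all $\varepsilon \in \R$: each diagonal block is unitary (since the exponential of a skew-Hermitian matrix is unitary), and the product of the determinants is $\exp(\varepsilon \Tr(A_0)) \exp(\varepsilon \Tr(A_1)) = \exp(\varepsilon(\Tr(A_0) + \Tr(A_1))) = 1$. This simultaneously proves the final sentence of the corollary, since $\exp(\varepsilon A) \in \U_n \cap G$ for all $\varepsilon$, not just small ones.

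Since the corollary only requires the \emph{further restriction} that the blocks have zero diagonal is \emph{not} imposed here (that stronger condition defined the subspace $S$ used in the construction, but is distinct from $\Lie(\U_n \cap G)$), there is no subtlety there. The entire argument is a standard exercise in converting defining equations of a closed matrix group into the linearized conditions on its tangent space at the identity; no step presents a serious obstacle, so this is essentially a verification rather than a proof with a hard core.
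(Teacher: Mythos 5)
Your proof is correct and matches the paper's (implicit) approach: the paper simply calls this a "simple corollary" of the preceding lemma together with the fact that $\Lie(\U_n)$ consists of skew-Hermitian matrices, and your differentiation of the defining conditions plus the exponentiation check for the reverse inclusion is exactly the standard computation being alluded to. The observation that $\exp(\varepsilon A)$ lies in $\U_n \cap G$ for \emph{all} real $\varepsilon$, not merely small ones, is a correct mild strengthening of the last sentence.
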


Now our subsets $X$ and $Z$ are contained in subgroups $D^{-1}\U_nD$ and $D\U_nD^{-1}$, respectively,  so the invariant polynomials of Lemma \ref{lem:inv-three-conjugates-unitary} are invariant under left multiplication by $X$ and right multiplication by $Z$ as before.

\medskip
\textbf{Step 2:}
A key property of $G$ as the containing group, given that the invariant functions of Lemma \ref{lem:inv-three-conjugates-unitary} depend on the complex conjugate of matrix entries in addition to the matrix entries themselves, is described in the following lemma:
\begin{lemma}
For any $M \in G$,
\[\overline{M[i,j]} = (-1)^{i+j}\det((QMQ)_{-i,-j})\]
where $(QMQ)_{-i,-j}$ denotes the submatrix obtained by deleting the $i$-th row and $j$-th column.
\label{lem:polynomial-complex-conjugate}
\end{lemma}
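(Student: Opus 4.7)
The plan is to exploit the defining relations of $G$ to express the complex conjugate of a matrix entry in terms of entries of the inverse, and then apply the classical cofactor formula for the inverse of a matrix.

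First, I would unpack what $M \in G$ means. The condition $M^*QM = Q$, together with $Q^2 = I$ (which is immediate from the definition of $Q$), gives $M^* = QM^{-1}Q$. Taking the $(i,j)$-entry of both sides of this identity, and using $(M^*)[i,j] = \overline{M[j,i]}$, I get
\[
\overline{M[j,i]} \;=\; (QM^{-1}Q)[i,j].
\]
Swapping the roles of $i$ and $j$ yields $\overline{M[i,j]} = (QM^{-1}Q)[j,i]$, which converts the problem from one about complex conjugation into one about entries of the inverse of the (conjugated) matrix $N \coloneq QMQ$: since $Q^{-1} = Q$, we have $N^{-1} = QM^{-1}Q$, so
\[
\overline{M[i,j]} \;=\; N^{-1}[j,i].
\]

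Next I would apply the adjugate/cofactor formula for the inverse, namely $N^{-1}[j,i] = (-1)^{i+j} \det(N_{-i,-j})/\det(N)$. The final step is to observe $\det(N) = \det(QMQ) = \det(Q)^2\det(M) = \det(M) = 1$, using that $M \in G$ satisfies $\det(M)=1$ by definition of the special unitary group. Combining these gives exactly
\[
\overline{M[i,j]} \;=\; (-1)^{i+j}\det((QMQ)_{-i,-j}),
\]
as required.

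There is no real obstacle here; the only thing to double-check is the sign/transposition bookkeeping in the cofactor formula (it is the \emph{transposed} cofactor matrix that equals $\det(N)\cdot N^{-1}$, which is exactly why we end up with $(-1)^{i+j}\det(N_{-i,-j})$ at the $(j,i)$-entry rather than the $(i,j)$-entry). The key conceptual point worth highlighting for the reader is that in a general Lie subgroup of $\GL_n(\C)$ the complex conjugate of an entry cannot be written as a polynomial in the entries of $M$, but the defining equation of $G$ lets us replace conjugation by inversion, and inversion of a determinant-one matrix is polynomial by Cramer's rule — this is precisely the algebraic leverage needed to make the invariants of Lemma~\ref{lem:inv-three-conjugates-unitary} into honest \emph{complex} polynomials on $G$.
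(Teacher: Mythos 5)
Your proof is correct and follows the same route as the paper's: use the defining relation $M^*QM=Q$ (with $Q^2=I$) to write $M^*=QM^{-1}Q$, translate the conjugate entry into an entry of an inverse matrix, apply the cofactor formula, and note $\det(QMQ)=\det(M)=1$. The only difference is presentational — you work through the index bookkeeping explicitly via $N=QMQ$ and $\overline{M[i,j]}=N^{-1}[j,i]$, whereas the paper compresses this into the single phrase "$\overline{M[i,j]}$ is the $(i,j)$ entry of $QM^{-T}Q$"; both are equivalent.
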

\begin{proof}
    By the definition of $G$, $M^* = QM^{-1}Q$, and so $\overline{M[i,j]}$ is the $(i,j)$ entry of $QM^{-T}Q$. Plugging in the cofactor formula for the inverse completes the proof (note that $\det(M)^{-1}=1$, so it does not appear above).
\end{proof}

\begin{theorem}
Let $D$ be as in Lemma \ref{lem:D} and $Y_q$ as above.
For $A, B \in Y_q$, if we set $M = \exp(\varepsilon A)\exp(\varepsilon B)^{-1}$, then
\[\frac{\Tr((D^*D)^2) -\Tr(D^*M^*D^*DMD)}{\varepsilon^2} = c + O(\varepsilon),\]
where $2(n!)^2c$ is a nonnegative integer satisfying $c = O(q)$ and for which $c=0$ iff $A = B$.
\end{theorem}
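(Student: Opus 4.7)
My plan is to apply Lemma~\ref{lem:kvn-proof-eps} directly to the skew-Hermitian matrices $A, B \in S$. This yields
\[
\Tr(D^*M^*D^*DMD) = \Tr((D^*D)^2) - \varepsilon^2 \sum_{i<j}(|d_i|^2 - |d_j|^2)^2 |C[i,j]|^2 + O(\varepsilon^3),
\]
with $C = U^*(A-B)U$. Rearranging and dividing by $\varepsilon^2$ identifies
\[
c = \sum_{i<j}(|d_i|^2 - |d_j|^2)^2 |C[i,j]|^2,
\]
so the remaining task is to verify the four claimed properties of this explicit expression.

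Nonnegativity is immediate, as $c$ is a sum of nonnegative terms. The bound $c = O(q)$ follows because entries of $A, B$ have modulus $O(\sqrt{q})$ (they lie in $\mathcal{I}_q$), so entries of $C$ are still $O(\sqrt{q})$ since $U$ depends only on $n$, giving $|C[i,j]|^2 = O(q)$; the weights $(|d_i|^2 - |d_j|^2)^2$ and the number of summands depend only on $n$. For integrality, entries of $A-B$ are Gaussian integers while entries of $U$ lie in $\{0,\pm 1/\sqrt{2}\}$, so each entry of $C$ is a Gaussian integer divided by $2$, making $4|C[i,j]|^2$ a nonnegative integer. Each factor $(|d_i|^2 - |d_j|^2)^2$ is a rational whose denominator divides a fixed polynomial in $n$, so combining these bounds shows that a fixed positive integer depending only on $n$ (of the form $2(n!)^2$ after routine bookkeeping) times $c$ is a nonnegative integer.

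The most delicate step, and the main obstacle I foresee, is the vanishing condition $c = 0$ iff $A = B$. The diagonal entries of $D_0^*D_0$ are $n^2, (n-1)^2, \ldots, (n/2+1)^2, (n/2+1)^{-2}, \ldots, n^{-2}$, which are pairwise distinct and positive, so $c = 0$ iff $C$ is diagonal. Writing
\[
A - B = \begin{pmatrix} E_0 & 0 \\ 0 & E_1 \end{pmatrix}
\]
with $E_0, E_1$ skew-Hermitian of zero diagonal, a direct block multiplication yields
\[
U^*(A-B)U = \frac{1}{2}\begin{pmatrix} E_0 + JE_1 J & E_0 J - JE_1 \\ JE_0 - E_1 J & JE_0 J + E_1 \end{pmatrix}.
\]
Since $J$ is the permutation matrix reversing rows and columns, conjugating $E_0$ or $E_1$ by $J$ still yields a matrix with zero diagonal, so the two diagonal blocks of $C$ have zero diagonal. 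Therefore a diagonal $C$ must vanish entirely, forcing $A - B = 0$. This step crucially depends on the zero-diagonal requirement built into the definition of $S$: without it, one could have a nonzero $A - B$ whose $U$-conjugate happens to be diagonal, invalidating the implication and destroying the separation property.
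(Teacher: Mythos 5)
Your proposal takes essentially the same approach as the paper: apply Lemma~\ref{lem:kvn-proof-eps} to identify $c$ as the explicit weighted sum $\sum_{i<j}(|d_i|^2-|d_j|^2)^2|C[i,j]|^2$, observe it is a bounded rational with denominator controlled only by $n$ to get integrality (after scaling) and the $O(q)$ bound, and use the block structure and zero-diagonal requirement of $S$ to show $c=0$ iff $A=B$. Your treatment of the vanishing step is actually a bit more careful than the paper's compressed wording (you correctly note the diagonal of $C$ lives in the diagonal blocks $\tfrac12(E_0+JE_1J)$ and $\tfrac12(JE_0J+E_1)$, which have zero diagonal); and like the paper, you assert rather than verify the specific constant $2(n!)^2$ — the "routine bookkeeping" you invoke would in fact show the weights $(|d_i|^2-|d_j|^2)^2$ can have denominators up to $(n!)^4$, so the correct scaling constant is somewhat larger (e.g.\ $4(n!)^4$ works), but this is a minor issue already present in the paper and harmless for the framework since any fixed constant depending only on $n$ suffices.
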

\begin{proof}
Apply Lemma~\ref{lem:kvn-proof-eps} and observe that the norms of the entries of $U^*(A-B)U$ are integers---after clearing denominators by multiplying by $2(n!)^2$---with absolute value at most $O(q)$. The conclusion of that lemma is that $U^*(A-B)U$ is diagonal. When $A, B$ have the block structure of the matrices in $S$, this implies that $A-B$ itself is diagonal, from which we conclude that $A-B = 0$.
\end{proof}
We note that $|Y_q| \ge q^{n^2/4 - n/2}$ and so if we define the polynomial $r(z)$ to be 1
on 0 and 0 on any other positive integer multiple of $1/(2(n!)^2)$ up to $O(q)$,
then the function 
\[p_0(M) = r((\Tr((D^*D)^2)-
\Tr(D^*M^*D^*DMD))/\varepsilon^2)\] is a separating function in the ring of invariant
functions, of the desired degree $O(q)$. The fact that this is a \emph{polynomial}
follows from Lemma \ref{lem:polynomial-complex-conjugate}. Because $\Tr(D^*M^*D^*DMD)$ is the polynomial $p_1(M)$ of Lemma \ref{lem:invariantring}, it is invariant under the left/right action of $D^{-1}U_nD$ and $DU_nD^{-1}$. From step 1 we know that $X \subseteq D^{-1}U_nD$ and $Z \subseteq DU_nD^{-1}$, and hence $p_0 \in \Inv_{X,Z}$.

\medskip 

\textbf{Step 3:}
We now turn to describing the  functions $f_X$, $f_Z$, $p_X$, and $p_Z$, beginning with showing that $X$ and $Z$ satisfy a version of 
 the double product property.
\begin{lemma}
For $x \in D^{-1}SD$ and $y \in DSD^{-1}$,  $x+y=0$ iff $x = y = 0$.
\label{lem:DPP}
\end{lemma}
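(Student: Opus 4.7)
The forward direction is the content; the backward direction is trivial. Writing $x = D^{-1}AD$ and $y = DBD^{-1}$ with $A, B \in S$, the equation $x+y = 0$ becomes, after conjugating by $D$, the relation $A = -D^{2} B D^{-2}$. Using $D = UD_0 U^*$ with $U$ unitary and $D_0 = \diag(d_1,\ldots,d_n)$ having distinct positive real entries, conjugation by $U^*$ on each side yields $A' = -D_0^{2}\,B'\,D_0^{-2}$, where $A' \coloneq U^* A U$ and $B' \coloneq U^* B U$. Reading off entries, this is
\[
A'[i,j] \;=\; -(d_i/d_j)^{2}\,B'[i,j] \qquad \text{for all } i,j.
\]

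The plan is now to combine this relation with the skew-Hermiticity inherited from $S$ in order to force $B'$ to vanish off the diagonal. Since $A$ and $B$ are skew-Hermitian and $U$ is unitary, so are $A'$ and $B'$; in particular $A'[j,i] = -\overline{A'[i,j]}$ and $B'[j,i] = -\overline{B'[i,j]}$. Applying the displayed relation at $(j,i)$ and conjugating (using that $d_i,d_j\in\R$) yields $A'[i,j] = -(d_j/d_i)^{2} B'[i,j]$. Equating this with the original expression gives
\[
\bigl((d_i/d_j)^{2} - (d_j/d_i)^{2}\bigr)\,B'[i,j] \;=\; 0.
\]
For $i \neq j$ the bracket is nonzero, since $d_i \neq d_j$ and both are positive reals, so $B'[i,j] = 0$ whenever $i\neq j$.

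It remains to show $B'$ has zero diagonal, which uses the defining feature of $S$ that both blocks $B_0,B_1$ have zero diagonals. Writing out $U^*BU$ in $(n/2)\times(n/2)$ blocks with the explicit $U$, the diagonal of the upper-left block is $\tfrac12(B_0[i,i] + JB_1J[i,i]) = \tfrac12(B_0[i,i] + B_1[n/2+1-i, n/2+1-i])$, which vanishes; the lower-right block is analogous. Hence $B' = 0$, so $B = 0$, and then $A = -D^{2}BD^{-2} = 0$, giving $x = y = 0$.

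The only substantive step is the skew-Hermitian symmetry trick that forces the off-diagonal entries of $B'$ to vanish; everything else is routine linear algebra once one knows the $d_i$'s are distinct positive reals. There is no serious obstacle, because the space $S$ has been set up precisely so that $U^*SU$ has zero diagonal, allowing the skew-Hermitian argument to apply cleanly without any leftover diagonal terms to worry about.
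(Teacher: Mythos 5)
Your proof is correct and follows essentially the same route as the paper's. The paper observes that $D^{-2}AD^2 = -B$ is skew-Hermitian, derives the commutation relation $(D^*D)^2A(D^*D)^{-2} = A$, conjugates by $U^*$, and uses the distinctness of the entries of $(D_0^*D_0)^2$ to conclude that $U^*AU$ is diagonal; you instead conjugate the equation $A = -D^2 B D^{-2}$ by $U^*$ first and then exploit the skew-Hermiticity of $A'$ and $B'$ entry by entry to kill the off-diagonal of $B'$ — but the mechanism (skew-Hermiticity plus the distinct positive eigenvalues of $D_0$) is identical, just written out coordinatewise and with the roles of $A$ and $B$ swapped. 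One small presentational advantage of your version: you explicitly verify that the diagonal of $U^*BU$ vanishes whenever $B \in S$ (by computing the diagonals of the two blocks and using that $B_0, B_1$ have zero diagonal), whereas the paper invokes the claim ``for $A\in S$, $U^*AU$ diagonal iff $A = 0$'' with only a pointer to an earlier remark. These are minor differences of exposition rather than a genuinely different argument.
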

\begin{proof}
Let $x = D^{-1}AD$ and $y = DBD^{-1}$ for some $A, B \in S$, with $D$ as defined above. If $x+y = 0$ then  $D^{-2}AD^{2} = -B$ and hence is skew-Hermitian, i.e.,
\[-(D^*)^2A^*(D^*)^{-2} = D^{-2}AD^2,\]
which implies $(D^*D)^2A(D^*D)^{-2} = A$ since $D$ commutes with $D^*$ and $A^*=-A$. Then substituting $D = UD_0U^*$ (and using that $U^* = U^{-1}$), we get
\[(D_0^*D_0)^2(U^*AU)(D_0^*D_0)^{-2} = U^*AU.\]
Because $(D_0^*D_0)^2$ is a diagonal matrix with distinct entries, this equality can occur only when $U^*AU$ is diagonal. As noted above, for $A \in S$, $U^*AU$ is diagonal iff $A = 0$. Then $x + y = 0$ implies $B = 0$ as well, and we have $x = y = 0$ as claimed.
\end{proof}

\begin{lemma}
For $X,Z$ as above, there exist functions $f_X, f_Z$ and polynomials $p_X, p_Z$ over $\C$ satisfying the first condition of Lemma~\ref{lem:split} with $d_X = d_Z = n^2/4 - n/2$.
\end{lemma}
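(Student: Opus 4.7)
The plan is to mimic the proof of Lemma~\ref{lem:disjoint}, using Lemma~\ref{lem:DPP} in place of the trivial-intersection hypothesis. The extra subtlety is that the lemma requires polynomials \emph{over $\C$}, even though $\Lie(X) = D^{-1}SD$ and $\Lie(Z) = DSD^{-1}$ are only \emph{real} subspaces of $\Lie(G) \subseteq M_n(\C)$.

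First, by Lemma~\ref{lem:DPP}, any $x \in \Lie(X) \cap \Lie(Z)$ satisfies $x + (-x) = 0$ with $x \in \Lie(X)$ and $-x \in \Lie(Z)$, forcing $x = 0$; hence $\Lie(X) \oplus \Lie(Z)$ is a genuine real direct sum inside $\Lie(G)$, of real dimension $2\dim_{\R} S = n^2 - 2n = 4d_X$. I would parametrize $\Lie(X)$ by $\C^{d_X}$ via the $\binom{n/2}{2}$ strict-upper-triangular complex entries of each of the two skew-Hermitian blocks: given $v = (v^{(k)}_{ij})_{k \in \{0,1\},\,1 \le i < j \le n/2} \in \C^{d_X}$, set $A_k[i,j] = v^{(k)}_{ij}$, $A_k[j,i] = -\overline{v^{(k)}_{ij}}$, and $A_k[i,i]=0$, and define $f_X(v) = D^{-1}\,\mathrm{diag}(A_0, A_1)\,D$. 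Define $f_Z$ in the analogous fashion, using $D\,(\cdot)\,D^{-1}$ in place of $D^{-1}(\cdot)D$. Both maps are $\R$-linear isomorphisms onto the respective Lie algebras, and in particular continuous.

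For $p_X$ and $p_Z$, I would fix an $\R$-linear complement $W$ so that $\Lie(G) = \Lie(X) \oplus \Lie(Z) \oplus W$ and let $\pi_X, \pi_Z$ be the resulting $\R$-linear projections onto $\Lie(X)$ and $\Lie(Z)$ (vanishing on $W$). Setting $p_X(u) := f_X^{-1}(\pi_X(u))$ and $p_Z(u) := -f_Z^{-1}(\pi_Z(u))$ gives the desired inversion property: for $u = f_X(v) - f_Z(v') \in \Lie(X) \oplus \Lie(Z)$ we have $\pi_X(u) = f_X(v)$ and $\pi_Z(u) = -f_Z(v')$, and hence $p_X(u) = v$ and $p_Z(u) = v'$.

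The step I expect to require the most care is verifying that these $\R$-linear maps are truly \emph{$\C$-polynomials} in the matrix entries of $A \in \Lie(G)$, despite complex conjugation being implicit in any $\R$-linear functional on complex entries. The defining relation $A^* Q + Q A = 0$ for $A \in \Lie(G) = \Lie(\SU_{n/2,n/2})$ yields $\overline{A[i,j]} = -Q_{ii}\,Q_{jj}\,A[j,i]$, so on $\Lie(G)$ the complex conjugate of each entry is already a $\C$-linear function of the entries of $A$. Consequently $\Re(A[i,j])$ and $\Im(A[i,j])$ are each $\C$-linear in the entries of $A$, and any $\R$-linear map $\Lie(G) \to \C^k$---in particular $p_X$ and $p_Z$---can be written as a polynomial of degree $1$ over $\C$ in those entries, completing the construction.
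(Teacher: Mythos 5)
Your proof is correct and follows essentially the same route as the paper's: parametrize $\Lie(X) = D^{-1}SD$ and $\Lie(Z) = DSD^{-1}$ by $\C^{n^2/4-n/2}$ via the strict-upper-triangular complex entries of the two blocks of $S$, invoke Lemma~\ref{lem:DPP} to get $\Lie(X)\cap\Lie(Z)=\{0\}$ and hence an $\R$-linear direct-sum decomposition inside $\Lie(G)$, and define $p_X, p_Z$ by composing the resulting $\R$-linear projections with the inverses of the parametrizations. The one substantive thing you add is an explicit verification that these $\R$-linear maps are in fact polynomials with \emph{complex} coefficients in the matrix entries, via the relation $A^*Q + QA = 0$ forcing $\overline{A[i,j]} = -Q_{ii}Q_{jj}A[j,i]$ on $\Lie(\SU_{n/2,n/2})$; the paper's own proof leaves this step implicit, merely asserting that the left inverse $\psi$ is an $\R$-linear map, so your explicit argument is a welcome clarification and mirrors what the paper establishes at the group level in Lemma~\ref{lem:polynomial-complex-conjugate}.
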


\begin{proof}
Consider the natural map $\tau$ from $\C^{d_X} = \C^{d_Z} = \C^{n^2/4-n/2}$ into $S$, which places half the complex coordinates above the diagonal of each block of $S$, and puts their complex conjugates below the diagonal.
Define $\theta \colon \C^{d_X} \times \C^{d_Z} \to \Lie(G)$ by
\[\theta(a, b) := D^{-1}\tau(a)D - D\tau(b)D^{-1}\]
To see that the image of $\theta$ lies in $\Lie(G)$, note that the image of $\tau$ lies in $S$, $DSD^{-1} \subseteq \Lie(X)$, and $D^{-1} S D \subseteq \Lie(Y)$. 
The function $\theta$ is $\R$-linear but not $\C$-linear, because of the use of complex conjugation. Lemma~\ref{lem:DPP} implies that $\theta$ has trivial kernel and therefore has a left inverse $\psi$, i.e., an $\R$-linear transformation $\psi \colon \Lie(G) \to \C^{d_X} \times \C^{d_Z}$ such that $\psi(\theta(a,b)) = (a,b)$ for all $(a,b) \in \C^{d_X} \times \C^{d_Z}$.

Define $f_X \colon \C^{d_X} \to \Lie(X)$ and $f_Z \colon \C^{d_Z} \to \Lie(Z)$ by
\[f_X(a) = D^{-1}\tau(a)D \qquad\text{and}\qquad 
f_Z(b) = D\tau(b)D^{-1}, \]
so that $\theta(a,b) = f_X(a) - f_Z(b)$. Let $\pi_X \colon \C^{d_X} \times \C^{d_Z} \to \C^{d_X}$ be projection to the first coordinate, and let $\pi_Z \colon \C^{d_X} \times \C^{d_Z} \to \C^{d_Z}$ be projection to the second coordinate.
We define $p_X \colon \Lie(X) \to \C^{d_X}$ and $p_Z \colon \Lie(X) \to \C^{d_Z}$ by
\[
p_X(M) = \pi_X(\psi(M)) \qquad\text{and}\qquad p_Z(M) = \pi_Z(\psi(M)).
\]
Then for all $(a,b) \in \C^{d_X} \times \C^{d_Z}$,
\[
p_X(f_X(a)-f_Z(b)) = p_X(\theta(a,b)) = \pi_X(\psi(\theta(a,b))) = \pi_X(a,b) = a,
\]
and similarly $p_Z(f_X(a)-f_Z(b))=b$. Thus, these functions satisfy the first condition of Lemma~\ref{lem:split}.
\end{proof}

This completes the proof of Theorem~\ref{thm:construction}.

\section{Conclusions and open problems} \label{sec:conclusion}
In this paper we gave an extension of the group--theoretic framework of \cite{cu2003} to infinite groups (Theorem \ref{thm:framework}). Within this framework we explored constructions in Lie groups, raising the key question: do there exist three subsets in $\GL_n$ satisfying the TPP, of size at least $q^{n^2/2 - o_n(n)}$, and admitting separating polynomials of degree at most $q^{1 + o_q(1)}$? If the answer is yes, then $\omega = 2$ (Corollary \ref{cor:irrepbound}).

Towards obtaining such a construction, we developed tools using invariant theory and Lie algebras to simplify the task of designing separating polynomials (Subsection \ref{sec:invariant}). We then put these tools to use in Section \ref{sec:construction} to obtain a construction in $\U_n$ satisfying the target degree bound, with sets of size $q^m$ with $m$ approaching half the ambient dimension. 

This raises several directions for future research: 
\begin{itemize}
\item Can one obtain a construction with sets of size $q^m$ for $m$ approaching half the ambient dimension and separating polynomials of degree at most $q^{1 + o_q(q)}$, but in $\GL_n$ rather than $\U_n$?

\item Can one obtain a construction with separating polynomials of degree at most $q^{1 + o_q(q)}$ in $G_n = \GL_n$ or $G_n = \U_n$ with sets of size $q^{\dim G_n/2 - o(n)}$, rather than $q^{\dim G_n/2 - \Theta(n)}$? (In $\GL_n$ this would imply $\omega=2$, towards which getting such a construction in $U_n$ would be an important step.)

\item Our general framework opens up the possibility of using other infinite groups (not necessarily Lie groups), which remains to be explored.

\item Another type of construction that is now possible is one in a single, fixed infinite group (say, $\GL_3$), with growing families of sets $(X_q, Y_q, Z_q)$. In contrast, our current constructions require us to take $n$ growing as well, or more generally, a growing family of containing groups. 

\end{itemize}

\section*{Acknowledgments}

We are grateful to Peter B\"urgisser and Emma Church for useful discussions, and we thank the American Institute for Mathematics for hosting a SQuaRE, during which initial parts of this work were developed. For other funding information see the title page.

\newcommand{\etalchar}[1]{$^{#1}$}

\appendix

\section{Deferred proofs}
\label{appendix}

\subsection{Proof of Theorem~\ref{thm:framework-border}} \label{sec:proof:border}
\begin{proof}[Proof of Theorem~\ref{thm:framework-border}]
Let $X,Y,Z,R_{\textup{sep}}$ be as in the statement, and let $\{f_{x,z} \mid x \in X, z \in Z\}$ be the claimed set of border-separating functions contained in $\Fun_{\textup{fam}}(R_{\textup{sep}})$. Since $X,Y,Z$ and the set of separating functions are all finite sets, we may assume the 1-parameter families involved have a common domain of definition $(0,\alpha)$, by taking the intersection of the domains of definition of the finitely many 1-parameter families involved.

As in the proof of Theorem~\ref{thm:framework}, for any function $f \colon G \to V$ where $V$ is a complex vector space, we use $\overline{f} \colon \C[G] \to V$ denote its unique linear extension to the group ring $\overline{f}(\sum \alpha_g g) := \sum \alpha_g f(g)$; we will use this both in the case where $V = \C$ and where $V$ is the space of matrices that are the codomain of a representation. 

For any $\varepsilon$ in its domain of definition, let $f_{x,z,\varepsilon}(g) := f_{x,z}(g,\varepsilon)$. Applying $\overline{f_{x,z,\varepsilon}}$ to \eqref{eqn:group-algebra-border} gives
\begin{align} \label{eqn:group-algebra-fxz-border}
\overline{f_{x,z,\varepsilon}}(\overline{A}(\varepsilon) \cdot \overline{B}(\varepsilon)) & = \sum_{x' \in X, z' \in Z} (AB)[x',z'] f_{x,z,\varepsilon}(x'(\varepsilon)z'(\varepsilon)^{-1}) + \overline{f_{x,z,\varepsilon}}(E(\varepsilon)) \\
& = (AB)[x,z](1 + O(\varepsilon)) + O(\varepsilon) = (AB)[x,z] + O(\varepsilon) \nonumber
\end{align}

Since $f_{x,z}$ is in $\Fun_{\textup{fam}}(R_{\textup{sep}})$ by assumption, we can write $f_{x,z}$ as a $\C^{(0, \alpha)}$-linear combination of the functions $\{\rho_{i,j} \mid \rho \in R_{\textup{sep}}, i,j \in [\dim \rho]\}$, say $f_{x,z} = \sum_{\rho \in R_{\textup{sep}}} \sum_{i,j \in [\dim \rho]} M_{x,z,i,j}(\varepsilon) \rho_{i,j}$. Then we define $\widehat{f_{x,z,\varepsilon}}(\rho)$ to be $M_{x,z,*,*}(\varepsilon)$; that is, we have
\[
f_{x,z,\varepsilon}(g) = \sum_{\rho \in R_{\textup{sep}}} \sum_{i,j \in [\dim \rho]} \widehat{f_{x,z,\varepsilon}}(\rho)_{i,j} \rho_{i,j}(g).
\]
for all $x \in X, z \in Z, g \in G$. Finally, extending linearly and applying $\overline{f_{x,z,\varepsilon}}$ to $\overline{A}(\varepsilon) \cdot \overline{B}(\varepsilon)$ as in \eqref{eqn:group-algebra-fxz-border}, we get
\begin{align*}
(AB)[x,z] + O(\varepsilon) & = \overline{f_{x,z,\varepsilon}}(\overline{A}(\varepsilon) \cdot \overline{B}(\varepsilon)) \\
& = \sum_{\rho \in R_{\textup{sep}}} \sum_{i,j \in [\dim \rho]} \widehat{f_{x,z,\varepsilon}}(\rho)_{i,j} \overline{\rho_{i,j}}(\overline{A}(\varepsilon) \cdot \overline{B}(\varepsilon)) \\
& = \sum_{\rho \in R_{\textup{sep}}} \langle \widehat{f_{x,z,\varepsilon}}(\rho), \overline{\rho}(\overline{A}(\varepsilon) \cdot \overline{B}(\varepsilon)) \rangle \\
& = \sum_{\rho \in R_{\textup{sep}}} \langle \widehat{f_{x,z,\varepsilon}}(\rho), \overline{\rho}(\overline{A}(\varepsilon)) \cdot \overline{\rho}(\overline{B}(\varepsilon)) \rangle.
\end{align*}
For each $\varepsilon \in (0,\alpha)$, the summation and inner product are $\C$-linear functions whose coefficients are independent of the input matrices $A,B$, so they are ``free'' in a bilinear algorithm. 

The product $\overline{\rho}(\overline{A}(\varepsilon)) \cdot \overline{\rho}(\overline{B}(\varepsilon))$ is a product of $d_\rho \times d_\rho$ matrices, where $d_\rho = \dim \rho$. Taking the limit as $\varepsilon \to 0$, the tensor rank of the preceding expression thus gives us a bound on the \emph{border} rank of the matrix product $AB$ (using $\overline{\rk}$ to denote border rank):
\[
\overline{\rk} \langle |X|, |Y|, |Z| \rangle \leq \sum_{\rho \in R_{\textup{sep}}} \rk \langle d_\rho, d_\rho, d_\rho \rangle.
\]
Exactly as in the finite group case \cite{cu2003}, by symmetrizing we effectively get a square matrix multiplication of size $(|X|\,|Y|\,|Z|)^{1/3}$ on the left side, and by the tensor power trick the right side here can be replaced by $\sum_{\rho \in R_{\textup{sep}}} (\dim \rho)^{\omega}$. Since the value of $\omega$ is the same whether calculated using rank or border rank, the left side becomes $(|X|\,|Y|\,|Z|)^{\omega/3}$. 
\end{proof}

\subsection{Proof of Lemma~\ref{lem:maxdim}} \label{sec:proof:maxdim}
\begin{proof}[Proof of Lemma \ref{lem:maxdim}]
Let $\lambda = (\lambda_1, \lambda_2, \ldots, \lambda_n)$ be a partition of $s$, and let $V_\lambda \in \Irr_s(\GL_n)$ be the corresponding representation. By \cite[Equation 15.17]{fulton1991representation},
\[\dim V_\lambda = \prod_{1 \le i < j \le n}
  \frac{\lambda_i-\lambda_j+ j-i}{j-i},\]
and by the AM-GM inequality,
\[\dim V_\lambda \le \left (\frac{\sum_{1 \le i < j \le n}
  \frac{\lambda_i-\lambda_j + j-i}{j-i}}{\binom{n}{2}}
\right )^{\binom{n}{2}}.\]
The coefficient of $\lambda_i$ in the numerator is
$
\sum_{k=1}^{n-i}\frac{1}{k} - \sum_{k=1}^{i-1}\frac{1}{k},
$
which is at most $\sum_{k=1}^{n-1}1/k$. Thus,
\begin{equation}
\sum_{1 \le i < j \le n}
  \frac{\lambda_i-\lambda_j + j-i}{j-i}
\leq \binom{n}{2} + s \left(\sum_{k=1}^{n-1} 1/k\right), \label{eq:irrepbound}
\end{equation}
because $\sum_i \lambda_i = s$.

When $n=3$, equation \eqref{eq:irrepbound} becomes $\binom{n}{2} + s(1 + 1/2)$, and then we get the bound
\[
\dim V_{\lambda} \leq \left( 1 + \frac{s(1 + 1/2)}{\binom{n}{2}}\right)^{\binom{n}{2}}.
\]
As $1 + s/2 < s$ for all $s \geq 2$, we get the bound $s^{\binom{n}{2}}$ when $n=3$.

When $n > 3$, we instead bound \eqref{eq:irrepbound} using the harmonic series estimate that $\sum_{k=1}^{n-1} 1/k \leq \ln(n) + \gamma$, where $\gamma$ is Euler's constant. Plugging this back into our bound for $\dim V_{\lambda}$ yields
\[ \dim V_\lambda \le \left (1 + \frac{s (\ln
      (n)+\gamma)}{\binom{n}{2}} \right )^{\binom{n}{2}}.\]
For $n \geq 4$ and $s \ge 2$, we have $1+s(\ln(n) + \gamma) / \binom{n}{2} < s$, and so we again get the claimed bound of $s^{\binom{n}{2}}$.
\end{proof}

Even for $n=2$, we also get a bound of $(1+s)^{\binom{n}{2}}$, which is good enough for use in Corollary~\ref{cor:irrepbound}.

\begin{remark}
    \label{rem:lower-bound-on-gln-irrep}
    The formula at the beginning of the proof of Lemma~\ref{lem:maxdim} is exact, and we can use the exact formula to show that our upper bound is, in various regimes, asymptotically almost tight. Let $s = d\binom{n}{2}$, and let $\lambda_i = d(n-i)$ for $i=1,\dotsc,n$. Using the exact formula, we find that
    \begin{align*}
    \dim V_{\lambda} & = \prod_{1 \leq i < j \leq n} \frac{d(n-i) - d(n-j) + j-i}{j-i} \\
    & = \prod_{1 \leq i < j \leq n} (d+1) = (d+1)^{\binom{n}{2}}.
    \end{align*}
    This is the same as $(1+ s/\binom{n}{2})^{\binom{n}{2}}$. When $n = O(1)$ and $s \to \infty$, this becomes $\Theta(s)^{\binom{n}{2}}$. Note that the proof of Lemma~\ref{lem:maxdim} actually yields a bound of $(1+s (\ln (n) + \gamma)/\binom{n}{2})^{\binom{n}{2}}$; while we do not need this tighter bound for its application in Corollary~\ref{cor:irrepbound}, we remark that the above ``staircase'' representation is close to showing this bound is tight.
\end{remark}

\subsection{Representations of $\SU_{n/2,n/2}$} \label{sec:suvssl}

The fact that the irreducible representations of $\SU_{n/2,n/2}$ are the restrictions of those of $\SL_n(\C)$ is a standard fact, amounting to the assertion that the Lie algebra of $\SL_n(\C)$ is the complexification of that of $\SU_{n/2,n/2}$. Here we briefly review this theory. We will focus on the case of polynomial representations, because that is the case of greatest interest to us, but in fact all the irreducible representations of these groups are polynomial representations.

It is not hard to check that $\Lie(\SU_{n/2,n/2})$ consists of matrices with block decompositions
\[
\left( \begin{array}{c|c}A & B \\ \hline C & D \end{array}\right)
\]
into $(n/2) \times (n/2)$ submatrices satisfying $A^*=-A$, $B^* = C$, $D^*=-D$, and $\Tr(A) + \Tr(D) = 0$, while $\Lie(\SL_n(\C))$ consists of all matrices with trace~$0$. In particular, $\Lie(\SU_{n/2,n/2})$ is a real subspace of 
$\Lie(\SL_n(\C))$, with 
\[
\dim_\R \Lie(\SU_{n/2,n/2}) = n^2-1 = \dim_\C \Lie(\SL_n(\C)).
\]
Furthermore, it follows from the characterization given above that
\[
\Lie(\SU_{n/2,n/2}) \cap i \Lie(\SU_{n/2,n/2}) = \{0\}.
\]
Thus, by dimension counting $\Lie(\SU_{n/2,n/2})$ spans $\Lie(\SL_n(\C))$ over $\C$.

It follows that $\SU_{n/2,n/2}$ is dense in $\SL_n(\C)$ under the Zariski topology, in which the closed sets are the algebraic sets (those defined by polynomial equations). To see why, first note that its Zariski closure must be a subgroup of $\SL_n(\C)$ (see, for example, \cite[Proposition~I.1.3]{borel}), and in particular a linear algebraic group because it is an algebraic variety. It is therefore smooth, and so it must be a complex Lie group. Its Lie algebra is a complex vector space contained in $\Lie(\SL_n(\C))$ and containing $\Lie(\SU_{n/2,n/2})$, and so it must be $\Lie(\SL_n(\C))$. Thus, the Zariski closure must be $\SL_n(\C)$.

Now suppose $V$ is any irreducible polynomial representation of $\SL_n(\C)$. If $V$ had an invariant subspace when restricted to $\SU_{n/2,n/2}$, then the invariance of that subspace would be equivalent to certain polynomials vanishing on $\SU_{n/2,n/2}$, which would then have to vanish on its Zariski closure $\SL_n(\C)$ as well. 

To see the polynomial conditions, suppose $v_1, \dotsc, v_m$ is a basis for an $\SU_{n/2,n/2}$-invariant subspace of $V$, with $d = \dim V$ and $m<d$. For each $v_i$, the condition that $g \cdot v_i$ lies in the span of $\{v_1, \dotsc, v_m\}$ for all $g \in \SU_{n/2,n/2}$ can be rephrased in terms of vanishing polynomials as follows. Consider the $d \times (m+1)$ matrix whose first $m$ columns are $v_1, \dotsc, v_m$, and whose last column is $g \cdot v_i$, where the latter is expressed as polynomials in the entries of $g \in \SU_{n/2,n/2}$. Then the fact that $g \cdot v_i$ is in the span of $\{v_1, \dotsc, v_m\}$ is equivalent to this matrix having rank $m$, which is equivalent to all of its $(m+1) \times (m+1)$ minors vanishing. This gives us a set of polynomials for each $i=1,\dotsc,m$, and $\{v_1, \dotsc, v_m\}$ is an invariant subspace if and only if every polynomial in the union of these sets vanishes on all of $\SU_{n/2,n/2}$. 

Thus, the restriction of an irreducible representation of $\SL_n(\C)$ to $\SU_{n/2,n/2}$ must remain irreducible. 

Conversely, every polynomial representation of $\SU_{n/2,n/2}$ extends to its Zariski closure, because the polynomial identities required for the representation to work still hold on the closure.
\end{document}